\theoremstyle{plain}
\newtheorem{theorem}{Theorem}
\newtheorem{lemma}{Lemma}
\newtheorem{remark}{Remark}
\newtheorem{assumption}{Assumption}
\newtheorem{corollary}{Corollary}
\theoremstyle{definition}
\newtheorem{example}{Example}
\begin{document}

\begin{frontmatter}
\title{Nonparametric inference for spot volatility in pure-jump semimartingales}
%\title{A sample article title with some additional note\thanksref{t1}}
\runtitle{Nonparametric inference for spot volatility}
%\thankstext{T1}{A sample additional note to the title.}

\begin{aug}
%%%%%%%%%%%%%%%%%%%%%%%%%%%%%%%%%%%%%%%%%%%%%%%
%% Only one address is permitted per author. %%
%% Only division, organization and e-mail is %%
%% included in the address.                  %%
%% Additional information can be included in %%
%% the Acknowledgments section if necessary. %%
%% ORCID can be inserted by command:         %%
%% \orcid{0000-0000-0000-0000}               %%
%%%%%%%%%%%%%%%%%%%%%%%%%%%%%%%%%%%%%%%%%%%%%%%
\author[A]{\fnms{Chengxin}~\snm{Yan} 
% %\thanks{[\textbf{Corresponding author indication should be put in the Acknowledgment section if necessary.}]}
 \ead[label=e1]{cxyan@smu.edu.sg}},
 \author[A]{\fnms{Dachuan}~\snm{Chen}\ead[label=e2]{dcchen@smu.edu.sg}
% %\orcid{0000-0000-0000-0000}
 }
 \and
 \author[A]{\fnms{Jia}~\snm{Li}\ead[label=e3]{jiali@smu.edu.sg}}
%%%%%%%%%%%%%%%%%%%%%%%%%%%%%%%%%%%%%%%%%%%%%%
%% Addresses                                %%
%%%%%%%%%%%%%%%%%%%%%%%%%%%%%%%%%%%%%%%%%%%%%%
\address[A]{School of Economics, 
 Singapore Management University
%University or Company Name \textbf{[Additional affiliations should be put in the Acknowledgments section]}
 \printead[presep={ ,\ }]{e1,e2,e3}}

%\address[B]{Department,
%University or Company Name \textbf{[Additional affiliations should be put in the Acknowledgments section]}\printead[presep={,\ }]{e2,e3}}
\end{aug}

\begin{abstract}
We provide a comprehensive analysis of spot volatility inference in pure-jump semimartingales under two asymptotic settings: fixed-$k$, where each local window uses a fixed number of observations, and large-$k$, where this number grows with sampling frequency. For both active- and possibly inactive-jump settings, we derive generally nonstandard, typically non-Gaussian limit distributions and establish valid inference, including when the jump-activity index is consistently estimated. Simulations show that fixed-$k$ asymptotics offer markedly better finite-sample accuracy, underscoring their practical advantage for nonparametric spot volatility inference.
\end{abstract}

\begin{keyword}[class=MSC]
\kwd{60F15}
\kwd{60G44}
\kwd{62G20}
\end{keyword}

\begin{keyword}
\kwd{Coupling}
\kwd{high-frequency data}
\kwd{nonparametric inference}
\kwd{pure-jump semimartingales}
\kwd{spot volatility}
\end{keyword}

\end{frontmatter}
%%%%%%%%%%%%%%%%%%%%%%%%%%%%%%%%%%%%%%%%%%%%%%
%% Please use \tableofcontents for articles %%
%% with 50 pages and more                   %%
%%%%%%%%%%%%%%%%%%%%%%%%%%%%%%%%%%%%%%%%%%%%%%
%\tableofcontents

\section{Introduction}
The statistical inference for the stochastic volatility of continuous-time It\^{o} semimartingales with continuous Brownian diffusion plus jump component have been extensively studied in high-frequency econometrics and statistics
 \cite{ait2014high,jacod2012discretization}. The existing literature mainly focuses on two types of volatility quantities. The first concerns semiparametric  inference for integrated volatility functionals over a nontrivial time interval (see, e.g., \cite{andersen2003modeling,
      barndorff2004econometric,
      mykland2009inference,
      todorov2012realized,
      jacod2013quarticity,
      li2016generalized,
      renault2017efficient,
      li2013volatility,
      li2017adaptive,
      li2019efficient,
      jacod2019estimating,
      li2021glivenko}). 
The second concerns nonparametric  inference for spot volatility at any given point in time (see, e.g.,
\cite{foster1996continuous,
      comte1998long,
      fan2008spot,
      kristensen2010nonparametric,
      jacod2021volatility,
      Bollerslev:2021,
      bollerslev2024optimalJOE,
      li2024reading}).

When the underlying processes are governed by pure-jump semimartingales, prior research has mainly focused on the semiparametric inference of integrated volatility functionals (see, e.g., 
\cite{woerner2007inference,
      todorov2011limit,
      todorov2012realizeda,
      todorov2013power}), 
activity index inference (see, e.g., \cite{todorov2015jump,
andersen2015fine,
hounyo2017local,
todorov2017testing,
kolokolov2022estimating}), 
and tests on the necessity of Brownian motion in price modeling (see, e.g.,
\cite{ait2010brownian,
      cont2011nonparametric,
      jing2012modeling,
      kong2015testing}). 
However, to the best of our knowledge, there is no literature on nonparametric  inference for spot volatility under  the pure-jump framework.

Motivated by the preceding research and to address the aforementioned theoretical gap, this paper investigates nonparametric inference for spot volatility in a continuous-time pure-jump semimartingale defined on a filtered probability space \((\Omega, \mathcal{F}, (\mathcal{F}_t)_{t \geq 0}, \mathbb{P})\), given by
\begin{align}
	\label{eq:Price-process}
	X_t = X_0 + \int_0^t b_s\,ds + \int_0^t \sigma_{s-}\,dZ_s,
\end{align}
where \(b\) and \(\sigma\) are stochastic processes with c\`adl\`ag sample paths, and \(Z\) is a symmetric \(\beta\)-stable process with activity  index \(\beta \in (0,2)\), satisfying for any \(t \geq s\) and \(u \in \mathbb{R}\),
\begin{align*}
	\mathbb{E}\!\left[e^{iu(Z_t-Z_s)}\mid \mathcal{F}_s\right] 
	= \exp\!\left(-\tfrac{1}{2} (t-s)|u|^{\beta}\right).
\end{align*}
This framework complements the extensive literature on nonparametric inference for spot volatility in Brownian diffusion models, corresponding to the special case \(\beta = 2\).

Two theoretical frameworks have been developed for spot volatility inference based on high-frequency data. The conventional approach assumes that the number of high-frequency observations, \(k\), within each local estimation window diverges to infinity at an appropriate rate as the sampling interval shrinks to zero. Analogous to standard kernel-based nonparametric analysis, this ``large-\(k\)'' theory ensures consistency of the nonparametric estimator and justifies Gaussian-based inference through the classical central limit theorem (CLT). 
In contrast, the ``fixed-\(k\)'' approach recently proposed by \cite{Bollerslev:2021} treats \(k\) as a fixed constant. Although the spot volatility estimator is no longer consistent when the number of observations is considered as fixed, their framework exploits the local Gaussianity of the Brownian diffusion model to conduct asymptotically valid inference. By eliminating the additional layer of asymptotic approximation associated with letting \(k \to \infty\), the fixed-\(k\) approach achieves more accurate finite-sample approximations.

Parallel to the prior literature, this paper develops both the fixed-$k$ and large-$k$ theories for nonparametric spot volatility inference in the pure-jump semimartingale model \eqref{eq:Price-process}. Once we deviate from the conventional Brownian setting (i.e., $\beta=2$), the analysis depends critically on whether the driving stable process $Z$ exhibits active or inactive jumps, corresponding to $\beta \in (1,2)$ and $\beta \leq 1$, respectively. While our main contribution concerns spot volatility inference for a given $\beta$, we also establish the validity of the proposed procedure when $\beta$ is unknown but can be consistently estimated using well-established methods (see, e.g., \cite{ait2009estimating,todorov2011limit,todorov2013power,todorov2015jump,kolokolov2022estimating,hounyo2017local}).

Under the fixed-$k$ framework with active jumps, we propose a local power-variation-type estimator with power index $p$. We show that, for any $p >0$, the (multiplicative) estimation error can be strongly approximated---or ``coupled''---by the sum of $k$ i.i.d.\ $p$-th absolute stable random variables. In the inactive case, we introduce a second-order difference-type estimator to remove the confounding effects of the drift component and establish that it can be coupled by the sum of i.i.d.\ $p$-th absolute stable random variables of the difference type. As in \cite{Bollerslev:2021}, the coupling theory facilitates asymptotically valid inference for spot volatility, including the construction of confidence intervals (CIs).

Under the large-$k$ framework with active jumps, we derive the asymptotic distribution of the $t$-statistics associated with the spot volatility estimators.  Specifically, for $p \in (0, \beta/2)$, the limiting distribution is Gaussian by the classical CLT. 
For $p=\beta/2$, the limiting distribution is also Gaussian, but at a faster (by a logarithmic factor) convergence rate.
For $p \in (\beta/2, \beta)$, the limiting distribution follows a totally right-skewed $\beta/p$-stable law according to the generalized CLT under the same asymptotic scheme.
In the case of inactive jumps, we propose a second-order difference-type estimator and establish analogous asymptotic results.

We conduct Monte Carlo experiments to compare the finite-sample performance of the fixed-$k$ and large-$k$ asymptotic theories for conducting inference. 
The results show that, in finite samples, the fixed-$k$ theory provides a considerably more accurate approximation to the distribution of the spot volatility estimators than the large-$k$ theory. We thus recommend the fixed-$k$ spot volatility inference procedure for practical applications, while noting that the large-$k$ theory is of independent theoretical interest, which is especially useful for further analysis on the semiparametric estimation and inference of general  integrated volatility functionals as studied by \cite{jacod2013quarticity} and \cite{li2017adaptive} among others.

The remainder of the paper is organized as follows. Section~\ref{sect:fixed-k} presents the fixed-$k$ theory, followed by the large-$k$ theory in  Section~\ref{sect:large-k}. Section~\ref{sect:simulations} reports simulation results. Section~\ref{sect:proofs} collects all proofs. Throughout the paper, $K$ denotes a generic positive constant whose value may change from line to line; if it depends on parameters such as $p$ or $T$, we write $K_{p,T}$. For a sequence of random variables $\{X_n\}$, we write $X_n = O_\mathbb{P}(1)$ if it is bounded in probability, that is, for any $\epsilon>0$ there exists $M<\infty$ such that $\sup_n \mathbb{P}(|X_n|>M)<\epsilon$. If $u_n$ is a sequence of positive numbers, then $X_n = O_\mathbb{P}(u_n)$ means $X_n/u_n = O_\mathbb{P}(1)$; similarly, $X_n = o_\mathbb{P}(1)$ if $X_n\to 0$ in probability, and $X_n = o_\mathbb{P}(u_n)$ if $X_n/u_n \to 0$ in probability. We use $\xrightarrow{\mathcal{L}}$ (resp. $\overset{\mathcal{L}}{=}$) to denote convergence (resp. equality) in distribution, and write $a_n \asymp b_n$ for two real sequences if there exists $K\ge 1$ such that $a_n/K \le b_n \le K a_n$. Finally, we assume the process $X$ is observed at discrete times $i\Delta_n$, $i=1,\ldots,n$, over the fixed time interval $[0,T]$ with $\Delta_n=T/n$, and denote the $i$-th increment of $X$ by $\Delta_i^n X \equiv X_{i\Delta_n}-X_{(i-1)\Delta_n}$. All limits are for $n\to\infty$.

%%%%%%% Section 2 %%%%%%%%%%
\section{Fixed-\texorpdfstring{$k$}{} inference theory}
\label{sect:fixed-k}
This section develops the fixed-$k$ theory for spot volatility inference under the pure-jump model \eqref{eq:Price-process}. Section \ref{sect:fixed-k-infeasible} first considers the ``infeasible'' case, where the activity index $\beta$ is assumed to be known. Section \ref{sect:fixed-k-feasible} then extends the analysis to the feasible setting in which $\beta$ is unknown but can be consistently estimated.

\subsection{Spot volatility inference with known $\beta$}
\label{sect:fixed-k-infeasible}
In this subsection, we consider the scenario in which the activity index $\beta$ is known. The main goal is to construct asymptotically valid CIs for the \emph{scaled} spot volatility, $\sigma_{n,t}$, defined as
\begin{align}
	\label{eq:scaled-volatility}
	\sigma_{n,t} \equiv \Delta_n^{1/\beta} \sigma_t.
\end{align}

We clarify that the scaling factor $\Delta_n^{1/\beta}$ is introduced to ensure that the object of interest is comparable across models with different values of $\beta$. To illustrate this, consider the benchmark Brownian diffusion model
\begin{align}
	\label{eq:Price-process-cont}
	X_t = X_0 + \int_0^t b_s\,ds + \int_0^t v_s\,dW_s,
\end{align}
where $W$ is a standard Brownian motion (i.e., the special case of $Z$ with $\beta = 2$) and $v$ is a stochastic volatility process. By the scaling property of stable processes (see, e.g., \cite{samoradnitsky:1994}), increments of model \eqref{eq:Price-process} scale as $\Delta_n^{1/\beta}$, whereas increments of model \eqref{eq:Price-process-cont} scale as $\Delta_n^{1/2}$. Therefore, it is not meaningful to directly compare the spot volatility $\sigma_t$ in \eqref{eq:Price-process} with $v_t$ in \eqref{eq:Price-process-cont}, as these coefficients are integrated with the driving stable processes for generating observed data, with distinct asymptotic scales. In contrast, the scaled volatility defined in \eqref{eq:scaled-volatility} normalizes the scaling difference, thereby allowing meaningful comparisons across models.

It is also worth clarifying from the outset that, although the scaled volatility $\sigma_{n,t}$ shrinks to zero as 
$n \to \infty$, the proposed CIs contract at the same rate, thereby delivering correct asymptotic coverage.

In what follows, we distinguish between two regimes of the pure-jump model depending on the jump activity of the driving stable process. When $\beta \in (1,2)$, the process exhibits active jumps, while for $\beta \leq 1$, it features {inactive jumps}. The construction of the estimators and the corresponding inference results differ notably between these two cases, and we therefore treat them separately in the subsequent discussion.

\subsubsection{The case with active jumps}
\label{sect:fixed-k-infeasible-1}
We first focus on the case with $\beta \in (1,2)$, which is closer to the benchmark Brownian diffusion model with $\beta = 2$ and thus serves as a natural point of comparison.\footnote{Pure-jump models with active jumps have found empirical support in the recent literature. For instance, \cite{kolokolov2022estimating} reports an average jump activity index of $\beta = 1.76$ based on 5-minute Bitcoin returns.} As shown below, this setting allows for a relatively straightforward inference procedure for $\sigma_{n,t}$, closely resembling that developed for the Brownian framework.

To construct the spot volatility estimator, we divide the sample into 
\(m_n\) non-overlapping blocks, each of which contains a fixed number, \(k\), of returns. Let \(\mathcal{I}_{n,j} \equiv \{(j-1)k+1, \ldots, jk\}\) denote the collection of indices in the $j$-th block, which spans the time interval \(\mathcal{T}_{n,j} \equiv [(j-1)k\Delta_n, jk\Delta_n)\) with length $k\Delta_n\to0$.
For the $j$-th block, we consider a spot estimator with the form
\begin{align}
\label{eq:modi-estimator}
    \hat{\sigma}_{n,j}(p) \equiv \frac{1}{k}\sum_{i \in \mathcal{I}_{n,j}} |\Delta_i^n X|^p, \,\, p>0.
\end{align}

To establish the asymptotic property for $\hat{\sigma}_{n,j}(p)$, we impose some mild regularity conditions as summarized in the following assumption.

\begin{assumption}
\label{assumption}
Suppose that there exists a sequence \((T_m)_{m \geq 1}\) of stopping times increasing to infinity and a sequence \((K_m)_{m \geq 1}\) of constants such that the following conditions hold for each \(m \geq 1\): (i) 
\(
|b_t| + |\sigma_t|+|\sigma_t|^{-1} \leq K_m\,\,  \text{for all } t \in [0, T_m]
\); (ii)
\(
\mathbb{E}[|\sigma_{t \wedge T_m} - \sigma_{s \wedge T_m}|^2] \leq K_m |t-s|^{2\kappa}\,\,  \text{for all } t, s \in [0, T]
\) and for some $\kappa>0$.
\end{assumption}

Condition (i) in Assumption~\ref{assumption} requires the drift and volatility processes to be locally bounded, which is a standard assumption in high-frequency econometrics and statistics (see, e.g., \cite{jacod2012discretization,ait2014high}).
Condition (ii) imposes local $\kappa$-H\"{o}lder continuity of the volatility process in the $L^2$ norm for some $\kappa>0$.
The H\"{o}lder continuity index $\kappa$ is allowed to be arbitrarily small.
When $\kappa = 1/2$, this condition can be verified if $\sigma$ is an It\^{o} semimartingale or a long-memory process driven by a fractional Brownian motion (see, e.g., \cite{comte1996long}).
When $\kappa \in (0,1/2)$, this condition also accommodates rough volatility models (see, e.g.,  \cite{gatheral2018volatility,el2019characteristic,fukasawa2022consistent,chong2024statistical}).

Theorem \ref{th:fixed-k-1} establishes the asymptotic property of $ \hat{\sigma}_{n,j} (p)$ under the fixed-$k$ framework.

\begin{theorem}
\label{th:fixed-k-1}
Suppose  Assumption \ref{assumption} holds and $\beta \in (1,2)$. Then it holds that for any $t \in \mathcal{T}_{n,j}$,
\[
\frac{\hat{\sigma}_{n,j}(p)}{\sigma_{n,t}^p}
- \frac{1}{k}\sum_{i \in \mathcal{I}_{n,j}} |Z_i|^p
= O_{\mathbb{P}}\!\left(\Delta_n^{\,(p\wedge 1)(\kappa \wedge (1 - \frac{1}{\beta}))}\right)
= o_{\mathbb{P}}(1),
\]
where $(Z_i)_{i \in \mathcal{I}_{n,j}}$ are i.i.d.\ $\beta$-stable random variables satisfying 
$\mathbb{E}[e^{iuZ_i}] = \exp(-|u|^{\beta}/2)$.
\end{theorem}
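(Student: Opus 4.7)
The plan is to exploit self-similarity and independence of the increments of $Z$ to construct a natural coupling, then reduce the claim to a moment estimate for a stochastic integral against a stable process. Concretely, set $Z_i \equiv \Delta_n^{-1/\beta}(Z_{i\Delta_n}-Z_{(i-1)\Delta_n})$ for $i \in \mathcal{I}_{n,j}$; by independence of the increments of $Z$ and the scaling $Z_t \overset{\mathcal{L}}{=} t^{1/\beta}Z_1$, these are i.i.d.\ with the stated characteristic function. A standard localization via the sequence $(T_m)$ in Assumption~\ref{assumption} allows us to treat $b$, $\sigma$, and $\sigma^{-1}$ as uniformly bounded by $K$, and to upgrade the Hölder estimate on $\sigma$ so that it holds on all of $[0,T]$.

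For an arbitrary fixed $t \in \mathcal{T}_{n,j}$, I would decompose
\[
\Delta_i^n X \;=\; \sigma_t \Delta_n^{1/\beta} Z_i + D_{n,i} + R_{n,i},
\]
with drift piece $D_{n,i} \equiv \int_{(i-1)\Delta_n}^{i\Delta_n} b_s\,ds$ and volatility residual $R_{n,i} \equiv \int_{(i-1)\Delta_n}^{i\Delta_n} (\sigma_{s-}-\sigma_t)\,dZ_s$. Trivially $|D_{n,i}|\le K\Delta_n$. For $R_{n,i}$ I would invoke a BDG-type moment inequality for $\beta$-stable integrals: for any $q \in (0,\beta)$,
\[
\mathbb{E}|R_{n,i}|^q \;\le\; K_q\Bigl(\mathbb{E}\int_{(i-1)\Delta_n}^{i\Delta_n} |\sigma_{s-}-\sigma_t|^{\beta}\,ds\Bigr)^{q/\beta}.
\]
Since $\beta\le 2$, Jensen's inequality combined with condition (ii) of Assumption~\ref{assumption} gives $\mathbb{E}|\sigma_{s-}-\sigma_t|^{\beta} \le K|s-t|^{\beta\kappa}$, and because $|s-t|\le k\Delta_n$ throughout the block I obtain $\mathbb{E}|R_{n,i}|^q \le K_q \Delta_n^{q(1/\beta+\kappa)}$. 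Taking $q=1$ (admissible because $\beta>1$) yields $R_{n,i} = O_{\mathbb{P}}(\Delta_n^{1/\beta+\kappa})$, so after dividing by $\Delta_n^{1/\beta}$ the drift and residual contribute $O(\Delta_n^{1-1/\beta})$ and $O_{\mathbb{P}}(\Delta_n^{\kappa})$, respectively.

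The proof is finished with the elementary power inequality $\bigl||x+y|^p-|x|^p\bigr|\le |y|^p$ when $p\in(0,1]$ and $\bigl||x+y|^p-|x|^p\bigr|\le K_p(|x|^{p-1}|y|+|y|^p)$ when $p>1$. Applying this with $x=\sigma_t\Delta_n^{1/\beta}Z_i$ and $y=D_{n,i}+R_{n,i}$, then dividing by $\sigma_{n,t}^p=\sigma_t^p\Delta_n^{p/\beta}$ and using $\sigma_t^{-1}\le K$, each summand of $k^{-1}\sum_i(|\Delta_i^n X|^p/\sigma_{n,t}^p - |Z_i|^p)$ is $O_{\mathbb{P}}(\Delta_n^{p(\kappa\wedge(1-1/\beta))})$ when $p\le 1$ and $O_{\mathbb{P}}(\Delta_n^{\kappa\wedge(1-1/\beta)})$ when $p>1$; the factor $|Z_i|^{p-1}$ appearing in the second regime is only required to be tight, so no moment on $Z_i$ beyond $\beta$ is needed. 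Averaging over the $k$ (fixed) indices preserves the rate, reproducing the stated bound $\Delta_n^{(p\wedge 1)(\kappa\wedge(1-1/\beta))}$, which is $o_{\mathbb{P}}(1)$ since $\kappa>0$ and $\beta>1$.

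The main obstacle is the stable BDG-type estimate for $R_{n,i}$: I must convert the $L^2$-Hölder regularity of $\sigma$ into $L^q$ control (with $q<\beta$) of an integral against a $\beta$-stable driver at the sharp rate $\Delta_n^{1/\beta+\kappa}$. Everything else is bookkeeping, and the active-jump hypothesis $\beta>1$ is used precisely so that the drift scale $\Delta_n$ is dominated by the stable scale $\Delta_n^{1/\beta}$ (guaranteeing both $1-1/\beta>0$ and the existence of a first absolute moment for $Z_i$, which is what allows $q=1$).
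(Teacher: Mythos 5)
Your proposal is correct and follows essentially the same route as the paper: the same self-similarity coupling $Z_i=\Delta_n^{-1/\beta}\Delta_i^n Z$, the same moment bound for $\int(\sigma_{s-}-\sigma_\cdot)\,dZ_s$ (the paper's Lemma~\ref{le:Mom-esti}, derived there from standard stable tail estimates), the same elementary power inequalities with the same split into $p\le 1$ and $p>1$, and the same resulting rate. The only cosmetic difference is that you anchor the volatility at $t$ itself, whereas the paper anchors at the block start $t_{(n,j)}$ and handles $\sigma_{t_{(n,j)}}^p-\sigma_t^p$ as a separate term; the two bookkeeping schemes are equivalent.
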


Theorem \ref{th:fixed-k-1} establishes that the ratio ${\hat{\sigma}_{n,j}(p)}/{\sigma_{n,t}^p}$ can be strongly approximated---or coupled---by the random variable $k^{-1}\sum_{i\in\mathcal{I}_{n,j}} |Z_i|^p$, whose distribution is known for any given $\beta$ and can therefore be used for inference on $\sigma_{n,t}$. This result parallels that of \cite{Bollerslev:2021} for the case $\beta = p = 2$, where the coupling variable follows a scaled chi-squared distribution with $k$ degrees of freedom. Moreover, the coupling error is of asymptotic order $\Delta_n^{\,(p\wedge 1)(\kappa \wedge (1 - 1/\beta))}$, implying that choosing $p < 1$ deteriorates the approximation, while selecting $p > 1$ does not improve the convergence rate. Accordingly, we recommend setting $p = 1$ in practical applications.

With $k$ fixed, the spot estimator cannot be regarded as consistent, because the coupling variable for the estimation error ratio remains non-degenerate as $\Delta_n \to 0$. Nevertheless, the coupling result enables the direct construction of asymptotically valid  CIs for $\sigma_{n,t}$. Specifically, for any $\alpha \in (0,1)$, we select constants $\bar{L}_{\alpha}(p;\beta)$ and $\bar{U}_{\alpha}(p;\beta)$ satisfying
\[
\mathbb{P}\!\left[\bar{L}_{\alpha}(p;\beta) \leq \frac{1}{\bar{S}_k(p)} \leq \bar{U}_{\alpha}(p;\beta)\right] = 1 - \alpha,
\]
where $\bar{S}_k(p) \equiv k^{-1} \sum_{i=1}^k |Z_i|^p$. Then, by Theorem \ref{th:fixed-k-1},
\[
\mathbb{P}\!\left[\bar{L}_{\alpha}(p;\beta) \leq \frac{\sigma_{n,t}^p}{\hat{\sigma}_{n,j}(p)} \leq \bar{U}_{\alpha}(p;\beta)\right] \to 1 - \alpha,
\]
which leads to the following corollary.

    \begin{corollary}
    \label{cor:fixed-k-1}
Under the conditions in {Theorem \ref{th:fixed-k-1}}, we obtain 
    \begin{align*}
        \lim_{n\rightarrow{\infty}} \mathbb{P}\left[(\bar{L}_{\alpha}(p;\beta){\hat{\sigma}_{n,j}}(p))^{1/p}\leq {\sigma_{n,t}}\leq (\bar{U}_{\alpha}(p;\beta){\hat{\sigma}_{n,j}}(p))^{1/p}\right] = 1-\alpha.
    \end{align*}  
    \end{corollary}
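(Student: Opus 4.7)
The plan is to treat the corollary as an essentially direct consequence of Theorem \ref{th:fixed-k-1}, combined with the continuous mapping theorem and a monotone rearrangement of the confidence event. First I would rewrite the coupling statement of Theorem \ref{th:fixed-k-1} as
\begin{align*}
\frac{\hat{\sigma}_{n,j}(p)}{\sigma_{n,t}^p} = \bar{S}_{n,k}(p) + o_\mathbb{P}(1),
\end{align*}
where $\bar{S}_{n,k}(p) \equiv k^{-1}\sum_{i\in\mathcal{I}_{n,j}}|Z_i|^p$ has the same distribution as the reference variable $\bar{S}_k(p)$ introduced before the corollary. Since $|Z_i|^p>0$ almost surely, $\bar{S}_{n,k}(p)$ is almost surely positive and bounded away from zero with arbitrarily high probability, so the map $y\mapsto 1/y$ is continuous at the limit; Slutsky's theorem then yields
\begin{align*}
\frac{\sigma_{n,t}^p}{\hat{\sigma}_{n,j}(p)} \xrightarrow{\mathcal{L}} \frac{1}{\bar{S}_k(p)}.
\end{align*}

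Next I would rewrite the confidence event by a monotone transformation. Because $\hat{\sigma}_{n,j}(p)\geq 0$ and $x\mapsto x^{1/p}$ is monotone increasing on $[0,\infty)$, raising both sides of the inequalities in the corollary to the $p$-th power and dividing by $\hat{\sigma}_{n,j}(p)$ shows that the event $\{(\bar{L}_\alpha(p;\beta)\hat{\sigma}_{n,j}(p))^{1/p} \leq \sigma_{n,t} \leq (\bar{U}_\alpha(p;\beta)\hat{\sigma}_{n,j}(p))^{1/p}\}$ coincides with $\{\bar{L}_\alpha(p;\beta) \leq \sigma_{n,t}^p/\hat{\sigma}_{n,j}(p) \leq \bar{U}_\alpha(p;\beta)\}$. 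By the portmanteau theorem applied to the distributional convergence above, the probability of the latter event would converge to $\mathbb{P}[\bar{L}_\alpha(p;\beta) \leq 1/\bar{S}_k(p) \leq \bar{U}_\alpha(p;\beta)]$, which equals $1-\alpha$ by construction.

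The only technicality worth flagging, rather than a genuine obstacle, is the validity of portmanteau convergence at the interval endpoints, which requires the distribution of $1/\bar{S}_k(p)$ to be continuous at $\bar{L}_\alpha(p;\beta)$ and $\bar{U}_\alpha(p;\beta)$. This holds because each $|Z_i|^p$ possesses a Lebesgue density (stable laws are absolutely continuous), so $\bar{S}_k(p)$ and hence $1/\bar{S}_k(p)$ is absolutely continuous on $(0,\infty)$ and its distribution function has no atoms at any candidate choice of the quantile endpoints. No other step presents a real difficulty, so the corollary follows immediately once Theorem \ref{th:fixed-k-1} is in hand.
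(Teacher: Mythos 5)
Your proposal is correct and follows essentially the same route as the paper: the paper likewise passes from the coupling in Theorem \ref{th:fixed-k-1} to $\mathbb{P}[\bar{L}_{\alpha}(p;\beta)\leq \sigma_{n,t}^p/\hat{\sigma}_{n,j}(p)\leq \bar{U}_{\alpha}(p;\beta)]\to 1-\alpha$ and then inverts the event monotonically. Your explicit verification that the law of $1/\bar{S}_k(p)$ has no atoms at the endpoints is a detail the paper leaves implicit here (it is effectively established in the proof of Lemma \ref{le:quantile-continuity}, where the density of $\bar{S}_k(p)$ is shown to be smooth and strictly positive on $(0,\infty)$), but it does not change the argument.
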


Corollary \ref{cor:fixed-k-1} states that, for the $j$-th block,  
\[
\bigl[(\bar{L}_{\alpha}(p;\beta)\,\hat{\sigma}_{n,j}(p))^{1/p},\; (\bar{U}_{\alpha}(p;\beta)\,\hat{\sigma}_{n,j}(p))^{1/p}\bigr]
\]
forms a CI for $\sigma_{n,t}$ with asymptotic coverage $1-\alpha$. The length of this interval can be minimized by choosing $[\bar{L}_{\alpha}(p;\beta),\,\bar{U}_{\alpha}(p;\beta)]$ as the $1-\alpha$ level highest density interval of the distribution of $1/\bar{S}_k(p)$.

The validity of this CI requires $\beta > 1$, which is a key condition ensuring that the coupling error in Theorem \ref{th:fixed-k-1} is asymptotically negligible. Intuitively, this condition guarantees that the movements driven by $Z$ are ``sufficiently active'' to dominate, on a microscopic scale, the contribution of the (bounded-variational) drift component, paralleling a similar phenomenon in the classical Brownian diffusion setting. In fact, under the no-drift case (i.e., $b \equiv 0$), Theorem \ref{th:fixed-k-1} and Corollary \ref{cor:fixed-k-1} extend to the full range $\beta \in (0,2)$, thereby allowing possibly inactive jumps. In the more realistic setting with drift, however, an alternative spot estimator is needed, and the asymptotic theory must be adjusted accordingly. We turn to this next.

\subsubsection{The case with possibly inactive jumps}
\label{sect:fixed-k-infeasible-2}
To handle the case with possibly inactive jumps in the presence of drift, we consider a second-order difference-type estimator for $\sigma_{n,t}$, defined as
\begin{align}
	\label{eq:modi-estimator-2}
	\tilde{\sigma}_{n,j}(p)
	\equiv 
	\frac{2}{k} 
	\sum_{i \in \tilde{\mathcal{I}}_{n,j}}
	\big|\Delta_{2i}^n X - \Delta_{2i-1}^n X\big|^{p},
	\,\, p>0,
\end{align}
where $\tilde{\mathcal{I}}_{n,j} = \{(j-1)k/2 + 1,\ldots,(j-1)k/2 + k/2\}$ and $k \ge 2$ is assumed to be even for notational convenience. Compared with the local power-variation estimator in \eqref{eq:modi-estimator}, the differencing term $\Delta_{2i}^n X - \Delta_{2i-1}^n X$ removes the contribution of the drift component, provided that the drift coefficient satisfies a smoothness condition stated in Assumption \ref{assumption-2} below.

\begin{assumption}
\label{assumption-2}
    Suppose that {\rm Assumption \ref{assumption}}
holds. Furthermore, 
$
\mathbb{E}[|b_{t\wedge T_m}-b_{s\wedge T_m}|^{2}] \leq K_m |t-s|^{2\tilde{\kappa}} 
$
for all $t,s\in[0,T]$ and for some $\tilde{\kappa}>0$.
\end{assumption}

Assumption~\ref{assumption-2} imposes a local $\tilde{\kappa}$-Hölder continuity condition on the drift process under the $L^{2}$ norm. This regularity requirement is satisfied by a wide class of processes, including It\^o semimartingales as well as processes exhibiting long-memory or rough-path behavior. Similar smoothness conditions are also standard in the literature on inference for integrated volatility in the presence of drift under pure-jump semimartingale models; see, for example,  \cite{todorov2013power}, who adopts a stronger assumption that the drift process follows a continuous-time It\^o semimartingale.

\begin{theorem}
\label{th:fixed-k-2}
Suppose  Assumption \ref{assumption-2} holds and $\beta \in (1/(1+\tilde{\kappa}),2)$. Then it holds that for any $t \in \mathcal{T}_{n,j}$,
\[
\frac{\tilde{\sigma}_{n,j}(p)}{\sigma_{n,t}^p} - \dfrac{2}{k} \sum_{i \in \tilde{\mathcal{I}}_{n,j}} |\tilde{Z}_i|^p = O_{\mathbb{P}}\left(\Delta_n^{(p\wedge1)(\kappa\wedge(1+\tilde{\kappa}-\frac{1}{\beta}))}\right)=o_{\mathbb{P}}(1),
\]
where $(\tilde{Z}_i)_{i \in \tilde{\mathcal{I}}_{n,j}}$ are i.i.d.\ $\beta$-stable random variables satisfying 
$\mathbb{E}[e^{iu\tilde{Z}_i}] = \exp(-|u|^{\beta})$.
\end{theorem}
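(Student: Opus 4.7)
The approach would parallel the proof of Theorem \ref{th:fixed-k-1}, with the essential new element being that second-order differencing cancels the drift to leading order. After a standard localization based on Assumption \ref{assumption-2} (uniformly bounding $b$, $\sigma$, $\sigma^{-1}$ and making the $L^2$-Hölder moduli of $b$ and $\sigma$ global on $[0,T]$), I would decompose each differenced increment as $\Delta_{2i}^n X - \Delta_{2i-1}^n X = A_i^n + B_i^n + C_i^n$, where $A_i^n \equiv \int_{(2i-1)\Delta_n}^{2i\Delta_n} b_s\,ds - \int_{(2i-2)\Delta_n}^{(2i-1)\Delta_n} b_s\,ds$ is the drift difference, $B_i^n \equiv \sigma_{(j-1)k\Delta_n}(Z_{2i\Delta_n} - 2Z_{(2i-1)\Delta_n} + Z_{(2i-2)\Delta_n})$ is the frozen-volatility main term, and $C_i^n$ is the residual from replacing $\sigma_{s-}$ by $\sigma_{(j-1)k\Delta_n}$ in the two stochastic integrals. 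The $\tilde{\kappa}$-Hölder condition on $b$ yields $\mathbb{E}[|A_i^n|^2]^{1/2} \leq K \Delta_n^{1+\tilde{\kappa}}$ via Minkowski and a change of variables; this improvement over the $O(\Delta_n)$ rate one would get without differencing is exactly what enables us to accommodate $\beta \leq 1$.

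Next, I would identify the main term using the self-similarity of $Z$ and the independence of non-overlapping increments: jointly across $i$, $Z_{2i\Delta_n} - 2Z_{(2i-1)\Delta_n} + Z_{(2i-2)\Delta_n} \overset{\mathcal{L}}{=} \Delta_n^{1/\beta}\tilde{Z}_i$, with $\tilde{Z}_i$ i.i.d.\ having characteristic function $\exp(-|u|^{\beta})$ (obtained as the difference of two i.i.d.\ copies of the ``unit'' $\beta$-stable with characteristic function $\exp(-|u|^{\beta}/2)$); hence $B_i^n \overset{\mathcal{L}}{=} \sigma_{n,(j-1)k\Delta_n}\,\tilde{Z}_i$, and I would realize the $\tilde{Z}_i$ on the original space to make this an equality. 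The residual $C_i^n$ is handled by a stable BDG-type moment inequality at some order $q \in (0,\beta)$ together with the $\kappa$-Hölder condition on $\sigma$, giving an error of order $\Delta_n^{1/\beta + \kappa}$. To assemble the coupling, I would apply the power inequality $\bigl||x+y|^{p} - |x|^{p}\bigr| \leq |y|^{p}$ for $p \in (0,1]$ and $\leq K_p(|x|^{p-1}|y| + |y|^{p})$ for $p > 1$, with $x = B_i^n$ and $y = A_i^n + C_i^n$, then divide by $\sigma_{n,t}^p = \Delta_n^{p/\beta}\sigma_t^p$ and use the $\kappa$-Hölder bound to swap $\sigma_{(j-1)k\Delta_n}$ for $\sigma_t$. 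Averaging over $i$, the normalized drift piece contributes $O_{\mathbb{P}}(\Delta_n^{(p\wedge 1)(1+\tilde{\kappa} - 1/\beta)})$---the exponent is positive precisely when $\beta > 1/(1+\tilde{\kappa})$---and the volatility-continuity piece contributes $O_{\mathbb{P}}(\Delta_n^{(p\wedge 1)\kappa})$, yielding the stated rate.

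The main obstacle will be the power-expansion step when $p > 1$: since $\tilde{Z}_i$ has only moments of order strictly below $\beta$, the cross term $|B_i^n|^{p-1}(|A_i^n| + |C_i^n|)$ need not be integrable when $p-1 \geq \beta$, so a direct $L^1$ bound fails. I would circumvent this by truncating at $|\tilde{Z}_i| \leq M$, exploiting the stable tail $\mathbb{P}(|\tilde{Z}_i| > M) \asymp M^{-\beta}$ and the fact that only an $O_{\mathbb{P}}$ (not $L^r$) statement is required, so $M$ can be chosen large but fixed at the end: on $\{|\tilde{Z}_i| > M\}$ the contribution is controlled by the heavy-tail probability, while on $\{|\tilde{Z}_i| \leq M\}$ the cross term is dominated by $K M^{p-1}(|A_i^n| + |C_i^n|)$, whose $L^1$ norm is easily handled by the bounds above. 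This heavy-tail bookkeeping, combined with the simultaneous tracking of the two distinct error rates from drift and from volatility continuity, is the bulk of the technical work; the remaining localization, Hölder substitution, and scaling steps are routine adaptations of the proof of Theorem \ref{th:fixed-k-1}.
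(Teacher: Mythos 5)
Your proposal is correct and follows essentially the same route as the paper: the same drift/frozen-volatility/residual decomposition of the second-order difference, the same $\Delta_n^{1+\tilde{\kappa}}$ bound on the differenced drift and $\Delta_n^{\kappa+1/\beta}$ bound on the volatility residual via the stable moment estimate, the same identification of $(\Delta_{2i}^nZ-\Delta_{2i-1}^nZ)/\Delta_n^{1/\beta}$ as the coupling variables, and the same power inequalities split by $p\le 1$ versus $p>1$. The integrability concern you raise for the cross term when $p>1$ is handled in the paper simply by multiplying $O_{\mathbb{P}}$ rates directly (which, with $k$ fixed, is all that is needed); your truncation argument is a valid, slightly more explicit way of justifying the same step.
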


Theorem \ref{th:fixed-k-2} shows that ${\tilde{\sigma}_{n,j}(p)}/{\sigma_{n,t}^p}$ can be strongly approximated---or coupled---by  
$
2k^{-1} \sum_{i \in \tilde{\mathcal{I}}_{n,j}} |\tilde{Z}_i|^{p}
$
under the general condition $\beta \in (1/(1+\tilde{\kappa}),\,2)$. In the ``best-case'' scenario with $\tilde{\kappa}=1$, meaning that $b$ is locally Lipschitz continuous, the approximation applies to the broad range $\beta \in (1/2,\,2)$. For the remaining range $\beta \in (0,\,1/2]$, the validity of such an approximation in the presence of drift remains an open problem. We note, however, that this regime is of limited empirical relevance for modeling typical high-frequency price movements. Hence, the practical implications of this theoretical limitation are minimal.

Although the procedure based on $\tilde{\sigma}_{n,j}(p)$ accommodates a broader range of $\beta$ than the approach in Section~\ref{sect:fixed-k-infeasible-1}, this robustness comes at a cost. First, it requires stronger smoothness conditions on the drift coefficient. Second, the coupling variable $2k^{-1}\sum_{i\in\tilde{\mathcal{I}}_{n,j}} |\tilde{Z}_i|^p$ is more dispersed than $k^{-1}\sum_{i\in \mathcal{I}_{n,j}} |Z_i|^p$ in Theorem \ref{th:fixed-k-1}, leading to a loss of efficiency and reflecting a robustness-efficiency trade-off. 

Equipped with Theorem \ref{th:fixed-k-2}, we can follow a similar strategy as in Section \ref{sect:fixed-k-infeasible-1} to conduct inference for $\sigma_{n,t}$, as shown in Corollary \ref{cor:fixed-k-2} below, where for any $\alpha\in(0,1)$, we choose 
$\tilde{L}_{\alpha}(p;\beta)$ and 
$\tilde{U}_{\alpha}(p;\beta)$ such that 
	$\mathbb{P}[\tilde{L}_{\alpha}(p;\beta)\leq {1}/{\tilde{S}_k(p)}\leq \tilde{U}_{\alpha}(p;\beta)] = 1-\alpha,$ where $  \tilde{S}_k(p)\equiv2k^{-1} \sum_{i\in\tilde{\mathcal{I}}_{n,j}} |\tilde{Z}_i|^p$.
    \begin{corollary}
    \label{cor:fixed-k-2}
Under the conditions in  {Theorem \ref{th:fixed-k-2}}, we obtain 
    \begin{align*}
        \lim_{n\rightarrow{\infty}} \mathbb{P}\left[(\tilde{L}_{\alpha}(p;\beta){\tilde{\sigma}_{n,j}}(p))^{1/p}\leq {\sigma_{n,t}}\leq (\tilde{U}_{\alpha}(p;\beta){\tilde{\sigma}_{n,j}}(p))^{1/p}\right] = 1-\alpha.
    \end{align*} 
    \end{corollary}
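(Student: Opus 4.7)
The plan is to derive the corollary directly from Theorem \ref{th:fixed-k-2} by a continuous-mapping argument followed by a monotone transformation; the heavy stochastic-analytic work (the coupling itself) is already in place, so the remaining argument is essentially mechanical.

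First, I would invert the coupling. Theorem \ref{th:fixed-k-2} yields $\tilde{\sigma}_{n,j}(p)/\sigma_{n,t}^p = \tilde{S}_k(p) + o_\mathbb{P}(1)$, and the random variable $\tilde{S}_k(p) = 2k^{-1}\sum_{i\in\tilde{\mathcal{I}}_{n,j}}|\tilde{Z}_i|^p$ is strictly positive almost surely since the symmetric $\beta$-stable law is absolutely continuous on $\mathbb{R}$ with no atom at zero. Hence $x\mapsto 1/x$ is a.s.\ continuous at $\tilde{S}_k(p)$, and a Slutsky/continuous-mapping argument gives the inverted coupling $\sigma_{n,t}^p/\tilde{\sigma}_{n,j}(p) = 1/\tilde{S}_k(p) + o_\mathbb{P}(1)$, which serves as the asymptotic pivot for the interval.

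Second, I would apply the portmanteau theorem to the fixed interval $[\tilde{L}_\alpha(p;\beta),\tilde{U}_\alpha(p;\beta)]$. Its boundary carries zero probability under the (continuous) distribution of $1/\tilde{S}_k(p)$, so
\[
\mathbb{P}\bigl[\tilde{L}_\alpha(p;\beta) \leq \sigma_{n,t}^p/\tilde{\sigma}_{n,j}(p) \leq \tilde{U}_\alpha(p;\beta)\bigr] \to 1-\alpha
\]
by the defining property of $\tilde{L}_\alpha(p;\beta)$ and $\tilde{U}_\alpha(p;\beta)$. Finally, since all quantities involved are positive on an event of asymptotic probability one, I would rearrange the inequality to $\tilde{L}_\alpha(p;\beta)\tilde{\sigma}_{n,j}(p) \leq \sigma_{n,t}^p \leq \tilde{U}_\alpha(p;\beta)\tilde{\sigma}_{n,j}(p)$ and apply the strictly increasing map $x\mapsto x^{1/p}$ (valid on $(0,\infty)$ for $p>0$), yielding the stated two-sided CI.

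There is no substantive obstacle here; the only subtle point is the validity of the inversion step, which reduces to the almost-sure positivity and non-degeneracy of $\tilde{S}_k(p)$ near zero. Both properties follow from the everywhere-positive density of the symmetric $\beta$-stable distribution, so the argument above closes the proof of the corollary.
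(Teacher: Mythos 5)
Your argument is correct and follows essentially the same route as the paper, which derives the corollary directly from the coupling in Theorem \ref{th:fixed-k-2} by passing to the reciprocal pivot $\sigma_{n,t}^p/\tilde{\sigma}_{n,j}(p)$, invoking the defining quantile property of $\tilde{L}_{\alpha}(p;\beta)$ and $\tilde{U}_{\alpha}(p;\beta)$, and rearranging via the monotone map $x\mapsto x^{1/p}$. The only difference is that you make explicit the a.s.\ positivity of $\tilde{S}_k(p)$ and the zero boundary probability needed for the portmanteau step, details the paper leaves implicit (they are in fact verified in its proof of Lemma \ref{le:quantile-continuity}, which shows $\tilde{S}_k(p)$ has a strictly positive density on $(0,\infty)$).
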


Corollary \ref{cor:fixed-k-2} states that
 $[(\tilde{L}_{\alpha}(p;\beta){\tilde{\sigma}_{n,j}(p)})^{1/p}, (\tilde{U}_{\alpha}(p;\beta){\tilde{\sigma}_{n,j}(p))^{1/p}}]$
 is a  CI for $\sigma_{n,t}$ with asymptotic level $1-\alpha$. 
 The length of this interval can be minimized by choosing $[\tilde{L}_{\alpha}(p;\beta),\,\tilde{U}_{\alpha}(p;\beta)]$ as the $1-\alpha$ level highest density interval of the distribution of $1/\tilde{S}_k(p)$.

\subsection{Spot volatility inference with unknown $\beta$}
\label{sect:fixed-k-feasible}

The inference procedures proposed in Section~\ref{sect:fixed-k-infeasible} rely on the assumption that $\beta$ is known; this mirrors the classical Brownian diffusion setting where $\beta$ is known to be 2. In this subsection, we show that these procedures remain asymptotically valid when $\beta$ is unknown but can be consistently estimated. The key step is to establish a technical continuity result, demonstrating that the relevant quantiles of the coupling variables vary continuously with the activity index. This ensures that replacing $\beta$ with a consistent estimator does not affect the asymptotic validity of the CIs.

Many consistent estimators for the activity index, $\beta$, have been proposed in the literature cited in the introduction. For concreteness, we review a few concrete examples below. 

\begin{example}
\label{ex:hat-beta-aos}
For some $\varpi>0$ and $\eta>0$, define
$
U(\varpi,\eta)^n_t
\equiv \sum_{i=1}^{[ t/\Delta_n ]} 
{1}_{\{\, | \Delta_i^n X | > \eta \Delta_n^\varpi \}}
$
as the number of increments whose magnitude exceeds $\eta \Delta_n^\varpi$. 
Fix $0<\eta<\eta'$ and $T>0$, Theorem 2 in \cite{ait2009estimating} shows that  $\beta$ can be consistently estimated by 
\begin{align}
\label{eq:beta-esti-1}
    \hat{\beta}_n
    \equiv \frac{\log\!\left({U(\varpi,\eta)^n_T}/{U(\varpi,\eta')^n_T}\right)}
    {\log(\eta'/\eta)}.
\end{align}
\end{example}

\begin{example}
\label{ex:hat-beta-aap}
Let
$
V_t(p,\Delta_n)\equiv\sum_{i=1}^{[t/\Delta_n]} |\Delta_i^nX|^p$ be the realized $p$-th power variation. Corollary 4.1 in \cite{todorov2011limit} shows that $\beta$ can be consistently estimated by 
\begin{equation}
\label{eq:beta-esti-2}
\hat{\beta}_n(p) \equiv
\dfrac{p\log(2)}{\log(2)+\log\left(V_T(p,2\Delta_n)\right) - \log\left(V_T(p,\Delta_n)\right)},\,\,p>0.
\end{equation}
\end{example}

\begin{example}
\label{ex:hat-beta-spa}
Let $V_t^1(p,\Delta_n)=\sum_{i=1}^{[t/\Delta_n]}|\Delta_i^n X-\Delta_{i-1}^n X|^p$ 
denote the realized $p$-th power variation based on second-order differences of $X$, 
and let $V_t^2(p,
\Delta_n)=\sum_{i=4}^{[t/\Delta_n]}|\Delta_i^n X-\Delta_{i-1}^n X+\Delta_{i-2}^n X-
\Delta_{i-3}^n X|^p$ be its temporally aggregated version. Define
\begin{equation}
\label{eq:beta-esti-3}
\hat{\beta}_n (p)\equiv
\dfrac{p\log(2)}{\log\left(V_t^2(p,\Delta_n)\right) - \log\left(V_t^1(p,\Delta_n)\right)}
{1}_{\{V_t^2(p,\Delta_n)\neq V_t^1(p,\Delta_n)\}},\,p>0.
\end{equation}
Corollary 2 in \cite{todorov2013power}
establishes that $\hat{\beta}_n(p)$ is a consistent estimator of $\beta$ for any $p \in (0,\beta)$.
\end{example}

Lemma \ref{le:quantile-continuity}, below, establishes the continuity of the quantiles of the coupling variables $\bar{S}_k(p)$ and $\tilde{S}_k(p)$ with respect to the activity index $\beta$.
To the best of our knowledge, this non-trivial technical result is new to the literature and will be useful in similar inferential settings.

\begin{lemma}
\label{le:quantile-continuity}
For each $\alpha \in (0,1)$, let $\bar{q}_{\alpha}(\beta)$ and $\tilde{q}_{\alpha}(\beta)$ denote the $\alpha$-quantiles of $\bar{S}_k(p)$ and $\tilde{S}_k(p)$, respectively.
Then both $\bar{q}_{\alpha}(\beta)$ and $\tilde{q}_{\alpha}(\beta)$ are continuous in $\beta \in (0,2)$.
\end{lemma}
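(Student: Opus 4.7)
The strategy is to combine weak convergence of the underlying coupling variables in the parameter $\beta$ with the classical fact that quantiles pass to the limit under weak convergence whenever the limiting CDF is strictly increasing at the quantile. Fix $\beta_0 \in (0,2)$ and let $\beta \to \beta_0$. The plan is: (i) show $\bar{S}_k(p) \xrightarrow{\mathcal{L}} \bar{S}_k(p)|_{\beta_0}$ and $\tilde{S}_k(p) \xrightarrow{\mathcal{L}} \tilde{S}_k(p)|_{\beta_0}$; (ii) verify that the limit distributions have strictly increasing continuous CDFs on $(0,\infty)$; (iii) invoke the standard quantile-continuity principle.

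For step (i), the characteristic functions $u \mapsto \exp(-|u|^\beta/2)$ and $u \mapsto \exp(-|u|^\beta)$ are continuous in $\beta$ pointwise in $u$, so Lévy's continuity theorem gives $Z^{(\beta)} \xrightarrow{\mathcal{L}} Z^{(\beta_0)}$ and $\tilde Z^{(\beta)} \xrightarrow{\mathcal{L}} \tilde Z^{(\beta_0)}$. Because the $k$ coordinates are independent and $x \mapsto |x|^p$ is continuous, the continuous mapping theorem yields joint weak convergence of $(|Z_i^{(\beta)}|^p)_{i=1}^k$ and $(|\tilde Z_i^{(\beta)}|^p)_{i=1}^k$, and hence of the averages $\bar{S}_k(p)$ and $\tilde{S}_k(p)$.

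For step (ii), I would use the classical fact (see, e.g., \cite{samoradnitsky:1994}, Chapter 1) that every symmetric $\beta$-stable law with $\beta \in (0,2)$ admits a bounded, continuous, strictly positive density on $\mathbb{R}$. A change-of-variable argument shows that $|Z_i|^p$ and $|\tilde Z_i|^p$ each have continuous densities that are strictly positive on $(0,\infty)$, and the convolution formula propagates these two properties to the $k$-fold averages $\bar{S}_k(p)$ and $\tilde{S}_k(p)$. Consequently, both limit CDFs are continuous and strictly increasing on $(0,\infty)$, so the $\alpha$-quantiles $\bar q_\alpha(\beta_0)$ and $\tilde q_\alpha(\beta_0)$ are unique continuity points. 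The standard fact that $F_n \Rightarrow F$ together with $F$ being continuous and strictly increasing at its $\alpha$-quantile implies $F_n^{-1}(\alpha) \to F^{-1}(\alpha)$ then delivers $\bar{q}_\alpha(\beta) \to \bar{q}_\alpha(\beta_0)$ and $\tilde{q}_\alpha(\beta) \to \tilde{q}_\alpha(\beta_0)$.

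The only nontrivial ingredient is step (ii): one must carefully invoke the classical analytic properties of symmetric stable densities (continuity and strict positivity everywhere on the real line, uniform in $\beta$ over any compact subset of $(0,2)$) in order to guarantee strict monotonicity of the limiting CDF at every positive value. Everything else is a direct application of Lévy's continuity theorem, the continuous mapping theorem, and the quantile-continuity principle.
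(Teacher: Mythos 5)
Your proposal is correct and follows essentially the same route as the paper: L\'evy's continuity theorem plus the continuous mapping theorem for weak convergence of $\bar{S}_k(p)$ and $\tilde{S}_k(p)$ in $\beta$, then strict positivity and smoothness of the symmetric stable density propagated through the power transform and $k$-fold convolution to get a continuous, strictly increasing limiting CDF on $(0,\infty)$. The only cosmetic difference is that you invoke the standard quantile-convergence principle as a black box, whereas the paper writes out the underlying $\epsilon$--$\delta$ sandwich argument explicitly.
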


Let $\bar{L}_{\alpha}(p;\hat{\beta}_n)$ and $\bar{U}_{\alpha}(p;\hat{\beta}_n)$ denote the  lower and upper bounds obtained by replacing $\beta$ with $\hat{\beta}_n$  in $\bar{L}_{\alpha}(p;{\beta})$ and $\bar{U}_{\alpha}(p;{\beta})$ for $\bar{S}_k(p)$. Similarly, let $\tilde{L}_{\alpha}(p;\hat{\beta}_n)$ and $\tilde{U}_{\alpha}(p;\hat{\beta}_n)$ denote the corresponding bounds for $\tilde{S}_k(p)$. Combining the consistency of $\hat{\beta}_n$ and Lemma \ref{le:quantile-continuity}, we obtain feasible CIs for $\sigma_{n,t}$ in the following corollary.

  \begin{corollary}
    \label{cor:fixed-k-3}
Let $\hat{\beta}_n$ be a consistent estimator for $\beta$. Under the conditions in {Theorem \ref{th:fixed-k-1}}, we obtain 
    \begin{align*}
        \lim_{n\rightarrow{\infty}} \mathbb{P}\left[(\bar{L}_{\alpha}(p;\hat{\beta}_n){\hat{\sigma}_{n,j}}(p))^{1/p}\leq \sigma_{n,t}\leq (\bar{U}_{\alpha}(p;\hat{\beta}_n){\hat{\sigma}_{n,j}}(p))^{1/p}\right] = 1-\alpha,
    \end{align*}  
     and under the conditions in {Theorem \ref{th:fixed-k-2}}, we obtain
    \begin{align*}
        \lim_{n\rightarrow{\infty}} \mathbb{P}\left[(\tilde{L}_{\alpha}(p;\hat{\beta}_n){\tilde{\sigma}_{n,j}}(p))^{1/p}\leq {\sigma_{n,t}}\leq  (\tilde{U}_{\alpha}(p;\hat{\beta}_n){\tilde{\sigma}_{n,j}}(p))^{1/p}\right] = 1-\alpha.
    \end{align*} 
    \end{corollary}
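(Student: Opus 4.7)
The plan is to reduce the feasible statement to the infeasible one (Corollary \ref{cor:fixed-k-1} and Corollary \ref{cor:fixed-k-2}) via a Slutsky-type argument, using Lemma \ref{le:quantile-continuity} to control the perturbation caused by plugging in $\hat{\beta}_n$ in place of $\beta$. I treat the active-jump case explicitly; the inactive-jump case is analogous with the tilde versions throughout.

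First, I rewrite the coverage event in ratio form. Let $R_n \equiv \sigma_{n,t}^{p}/\hat{\sigma}_{n,j}(p)$ and observe that the event in the corollary equals $A_n \equiv \{\bar{L}_{\alpha}(p;\hat{\beta}_n)\le R_n\le \bar{U}_{\alpha}(p;\hat{\beta}_n)\}$. By Theorem \ref{th:fixed-k-1}, together with the localization giving $|\sigma_t|^{-1}\le K_m$ (so that $\hat{\sigma}_{n,j}(p)/\sigma_{n,t}^{p}$ is bounded away from zero in probability on the localizing sets and hence on $\Omega$ by standard localization arguments), we obtain $R_n = 1/\bar{S}_k(p) + o_{\mathbb{P}}(1)$, where $\bar{S}_k(p)=k^{-1}\sum_{i\in\mathcal{I}_{n,j}}|Z_i|^{p}$ with i.i.d.\ $\beta$-stable $Z_i$'s. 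In particular, $R_n$ converges in distribution to $1/\bar{S}_k(p)$, whose distribution is continuous (being a smooth functional of i.i.d.\ $\beta$-stable laws, which have smooth densities).

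Next, I transfer the plug-in effect from $\hat{\beta}_n$ to the quantile bounds. By definition, $\bar{L}_{\alpha}(p;\beta)$ and $\bar{U}_{\alpha}(p;\beta)$ are expressible in terms of quantiles of $\bar{S}_k(p)$ (for example, under an equal-tailed or highest-density-interval choice, they are monotone functions of the $\alpha/2$- and $(1-\alpha/2)$-quantiles, or of the level-set boundaries of the density, which in both cases reduce to finitely many quantile evaluations). Lemma \ref{le:quantile-continuity} asserts continuity of each such quantile in $\beta\in(0,2)$; combining this with the consistency $\hat{\beta}_n\stackrel{\mathbb{P}}{\to}\beta$ and the continuous mapping theorem yields
\begin{equation*}
\bar{L}_{\alpha}(p;\hat{\beta}_n)\stackrel{\mathbb{P}}{\longrightarrow}\bar{L}_{\alpha}(p;\beta),\qquad
\bar{U}_{\alpha}(p;\hat{\beta}_n)\stackrel{\mathbb{P}}{\longrightarrow}\bar{U}_{\alpha}(p;\beta).
\end{equation*}

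The conclusion now follows from a standard Slutsky-type comparison. Let $B_n\equiv\{\bar{L}_{\alpha}(p;\beta)\le R_n\le \bar{U}_{\alpha}(p;\beta)\}$; by Corollary \ref{cor:fixed-k-1}, $\mathbb{P}(B_n)\to 1-\alpha$. The symmetric difference $A_n\triangle B_n$ is contained in the union of the events $\{|R_n-\bar{L}_{\alpha}(p;\beta)|\le |\bar{L}_{\alpha}(p;\hat{\beta}_n)-\bar{L}_{\alpha}(p;\beta)|\}$ and $\{|R_n-\bar{U}_{\alpha}(p;\beta)|\le |\bar{U}_{\alpha}(p;\hat{\beta}_n)-\bar{U}_{\alpha}(p;\beta)|\}$. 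Since $R_n\stackrel{\mathcal{L}}{\to}1/\bar{S}_k(p)$ whose distribution is continuous at the fixed points $\bar{L}_{\alpha}(p;\beta)$ and $\bar{U}_{\alpha}(p;\beta)$, while the perturbations on the right are $o_{\mathbb{P}}(1)$, a Portmanteau argument gives $\mathbb{P}(A_n\triangle B_n)\to 0$, whence $\mathbb{P}(A_n)\to 1-\alpha$. Taking $p$-th roots (the map $x\mapsto x^{1/p}$ is continuous and monotone on $[0,\infty)$) translates the ratio statement into the desired statement for $\sigma_{n,t}$. The inactive-jump case is handled identically, replacing Theorem \ref{th:fixed-k-1} and Corollary \ref{cor:fixed-k-1} by Theorem \ref{th:fixed-k-2} and Corollary \ref{cor:fixed-k-2}, and using the tilde quantile-continuity in Lemma \ref{le:quantile-continuity}. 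The main delicate point is ensuring that the equal-tailed or highest-density bounds used to define $\bar{L}_{\alpha},\bar{U}_{\alpha}$ are actually captured by Lemma \ref{le:quantile-continuity}; for the equal-tailed choice this is immediate, and for the highest-density choice it follows because the boundaries of the $(1-\alpha)$ highest-density interval of a family of log-concave or unimodal densities with a continuous parameter move continuously with that parameter.
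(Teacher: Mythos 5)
Your proposal is correct and follows exactly the route the paper intends: the paper gives no explicit proof of Corollary~\ref{cor:fixed-k-3}, asserting only that it follows from the consistency of $\hat{\beta}_n$ combined with the quantile continuity in Lemma~\ref{le:quantile-continuity}, and your Slutsky/Portmanteau argument (reduction to the ratio event, convergence of the plug-in bounds, and the atom-free limit law of $1/\bar{S}_k(p)$ established inside the proof of Lemma~\ref{le:quantile-continuity}) is precisely the fleshed-out version of that claim. Your closing caveat about whether the highest-density-interval endpoints inherit continuity from the quantiles is a fair point, but the paper itself does not address it either, so your writeup is at least as complete as the source.
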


%%%%%%% Section 3 %%%%%%%%%%
\section{Large-\texorpdfstring{$k$}{} inference theory}
\label{sect:large-k}
We next develop an inference theory for spot volatility under the large-$k$ framework, where $k \to \infty$. To emphasize the dependence of $k$ on $n$, we write $k_n$ throughout this section. For clarity of exposition, we focus on the setting in which the activity index $\beta$ is known, as in Section~\ref{sect:fixed-k-infeasible}. The case with unknown but consistently estimable $\beta$ can be handled analogously, following the same argument as in Section~\ref{sect:fixed-k-feasible}, and is therefore omitted here for brevity.

\subsection{The case with active jumps}
\label{sect:large-k-infeasible-1}
As in Section \ref{sect:fixed-k-infeasible-1}, we focus on the active case $\beta\in(1,2)$ in this section.
For the $j$-th block, the scaled spot volatility estimator is defined as
\begin{align}
\label{eq:large-k-estimator}
    \hat{\sigma}_{n,j}(p,\beta) \equiv \frac{1}{c_{\beta}(p)k_{n}}\sum_{i \in \mathcal{I}_{n,j}} |\Delta_i^n X|^p, \,\, 0<p<\beta,
\end{align}
where the scaling factor $c_\beta(p)\equiv\mathbb{E}[|Z_1|^p]$ is introduced for normalization, with its explicit expression given by (see Page 163 in \cite{ken1999levy})
\begin{align}
\label{eq:constant-c_p}
    c_{\beta}(p)=\dfrac{2^{p-\frac{p}{\beta}}\Gamma\left(\dfrac{1+p}{2}\right)\Gamma\left(1-\dfrac{p}{\beta}\right)}{\sqrt{\pi}\Gamma\left(1-\dfrac{p}{2}\right)}.
\end{align}

Theorem \ref{th:large-k-1}, below, establishes some key strong approximation results for the spot volatility estimator and its smooth transformations when $p<\beta/2$. In this case, the coupling variable has finite second moment, which further admits an asymptotic Gaussian approximation as $k_n\to\infty$.
\begin{theorem}
    \label{th:large-k-1}
    Suppose that {\rm Assumption \ref{assumption}} holds and that $\beta\in(1,2)$ is given. 
    Assume also that 
    $0<p<\frac{\beta}{2}$ and $k_{n} \asymp\Delta_n^{-\gamma}$ for some $$0<\gamma <2p\left(1-\frac{1}{\beta}\right)\wedge\frac{\kappa}{\kappa+\frac{1}{2p}}.$$
{\color{black}Then, for the transformations $f(x) = \log(x)$ or $f(x)=x^r$ with $r>0$},
there exists $\varepsilon>0$ such that
\begin{align*}
   \dfrac{k_{n}^{1/2}(f(\hat{\sigma}_{n,j}(p,\beta)) - f(\sigma_{n,t}^p))}{ f'(\hat{\sigma}_{n,j}(p,\beta)) \hat{\sigma}_{n,j}(p,\beta)} - \Xi_{n,j}(p,\beta) = o_{\mathbb{P}}(\Delta_n^{\varepsilon}),
\end{align*}
where
\begin{align*}
	\label{eq:defi-U_nj}
	\Xi_{n,j}(p,\beta)
	\equiv \frac{1}{k_{n}^{1/2}} \sum_{i\in\mathcal{I}_{n,j}}
	\left(\frac{|\Delta_i^n Z|^p}{c_{\beta}(p)\,\Delta_n^{p/\beta}}-1\right).
\end{align*}
\end{theorem}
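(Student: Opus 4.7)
The plan is to combine a refined coupling, adapted from Theorem \ref{th:fixed-k-1} to the regime $k_n\to\infty$, with a Taylor expansion tailored to the specific normalization in the theorem statement. The key structural observation is that the denominator $f'(\hat\sigma_{n,j}(p,\beta))\,\hat\sigma_{n,j}(p,\beta)$ is chosen so that both candidate transformations collapse to a single universal first-order expansion in the multiplicative ratio $U_{n,j}\equiv \hat\sigma_{n,j}(p,\beta)/\sigma_{n,t}^p - 1$. A direct calculation gives
\[
\frac{f(\hat\sigma_{n,j}(p,\beta))-f(\sigma_{n,t}^p)}{f'(\hat\sigma_{n,j}(p,\beta))\,\hat\sigma_{n,j}(p,\beta)} = g_f(U_{n,j}),
\]
where $g_f(u)=\log(1+u)$ when $f(x)=\log x$ and $g_f(u)=(1-(1+u)^{-r})/r$ when $f(x)=x^r$. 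In both cases $g_f(0)=0$ and $g_f'(0)=1$, so $g_f(u)=u+O(u^{2})$ near the origin.

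Next I would refine the coupling in Theorem \ref{th:fixed-k-1} to expose the block-size dependence as $k_n\to\infty$. Because $p<\beta/2<1$, tracking the drift and volatility-freezing errors over the block $\mathcal{T}_{n,j}$ of length $k_n\Delta_n$ should deliver the representation
\[
U_{n,j}=k_n^{-1/2}\,\Xi_{n,j}(p,\beta)+R_n,\qquad R_n=O_{\mathbb{P}}\!\bigl((k_n\Delta_n)^{p\kappa}+\Delta_n^{p(1-1/\beta)}\bigr),
\]
where the leading term arises by identifying $\Delta_i^nZ/\Delta_n^{1/\beta}$ as the coupling variable and centering its $p$-th absolute power by $c_\beta(p)=\mathbb{E}[|Z_1|^p]$. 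The assumption $p<\beta/2$ ensures $\mathbb{E}[|Z_1|^{2p}]<\infty$, so the classical i.i.d.\ CLT yields $\Xi_{n,j}(p,\beta)=O_{\mathbb{P}}(1)$.

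Combining these ingredients, a Taylor expansion of $g_f$ around the origin gives
\[
\frac{k_n^{1/2}(f(\hat\sigma_{n,j}(p,\beta))-f(\sigma_{n,t}^p))}{f'(\hat\sigma_{n,j}(p,\beta))\,\hat\sigma_{n,j}(p,\beta)}-\Xi_{n,j}(p,\beta)=k_n^{1/2}R_n+k_n^{1/2}O_{\mathbb{P}}(U_{n,j}^{2}),
\]
and the right-hand side is dominated by $k_n^{1/2}R_n+O_{\mathbb{P}}(k_n^{-1/2})$ since $U_{n,j}^{2}=O_{\mathbb{P}}(k_n^{-1}+R_n^{2})$. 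Writing $k_n\asymp\Delta_n^{-\gamma}$, the condition $\gamma<2p(1-1/\beta)$ makes $k_n^{1/2}\Delta_n^{p(1-1/\beta)}=\Delta_n^{p(1-1/\beta)-\gamma/2}$ polynomially small in $\Delta_n$, while $\gamma<\kappa/(\kappa+1/(2p))$ plays the same role for $k_n^{1/2}(k_n\Delta_n)^{p\kappa}=\Delta_n^{p\kappa-\gamma(p\kappa+1/2)}$. Together these yield the asserted $o_{\mathbb{P}}(\Delta_n^{\varepsilon})$ rate for some $\varepsilon>0$.

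The hard part, in my view, is the refined coupling step. One must revisit the drift-dominance and volatility-freezing arguments underlying Theorem \ref{th:fixed-k-1} and track explicit powers of $k_n\Delta_n$ rather than relying on the crude per-block bound $\Delta_n^{p(\kappa\wedge(1-1/\beta))}$; in particular, the $L^2$-H\"older control on $\sigma$ in Assumption \ref{assumption} must be accumulated over $k_n$ increments in a way that yields the $(k_n\Delta_n)^{p\kappa}$ rate without an additional $k_n^{1/2}$ summation loss, which is exactly what makes the tight constraint $\gamma<\kappa/(\kappa+1/(2p))$ operative. Once this step is in place, the Taylor expansion and the error bookkeeping are routine, leveraging $p<\beta/2$ to secure the finite second moment needed for $\Xi_{n,j}(p,\beta)=O_{\mathbb{P}}(1)$.
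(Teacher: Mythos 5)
Your proposal is correct and follows essentially the same route as the paper: the same decomposition of the coupling error into the volatility-freezing bias of order $(k_n\Delta_n)^{p\kappa}$ and the per-increment drift/freezing error of order $\Delta_n^{p(1-1/\beta)}+(k_n\Delta_n)^{p\kappa}$ (obtained via the inequality $||x|^p-|y|^p|\le|x-y|^p$ for $p<1$ and the $L^p$ moment bound for stable integrals), the same identification of the two constraints on $\gamma$ as exactly what absorbs the $k_n^{1/2}$ normalization, and the same delta-method step with a second-order remainder of order $k_n^{1/2}U_{n,j}^2=O_{\mathbb{P}}(k_n^{-1/2})$. The one genuine (minor) difference is your observation that the self-normalized statistic equals $g_f(U_{n,j})$ with $g_f(0)=0$, $g_f'(0)=1$ for both transformations, which unifies and slightly streamlines the paper's case-by-case mean-value-theorem argument and its separate control of the terms $A_{1,n}$ and $A_{2,n}$ comparing $f'(\hat{\sigma}_{n,j}(p,\beta))\hat{\sigma}_{n,j}(p,\beta)$ with $f'(\sigma_{n,t}^p)\sigma_{n,t}^p$.
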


Theorem \ref{th:large-k-1} shows that the $t$-statistic associated with commonly used smooth transformations (logarithm and power) of the spot volatility estimator can be strongly approximated by $\Xi_{n,j}(p,\beta)$. Since this coupling variable is a normalized sum of i.i.d.\ random variables with zero mean and (non-random) finite variance $c_\beta(2p)/c_\beta(p)^2 - 1$, the classical CLT implies that
\[
\Xi_{n,j}(p,\beta)\xrightarrow{\mathcal{L}} 
\mathcal{N}\!\left(0,\frac{c_{\beta}(2p)}{c_{\beta}(p)^2}-1\right).
\]
Consequently, the $t$-statistic inherits the same asymptotic distribution, allowing for the construction of a CI for $f(\sigma_{n,t}^p)$ as stated in the following corollary.

\begin{corollary}
\label{cor:large-k}
Suppose that the conditions in {Theorem \ref{th:large-k-1}} hold. 
For any $\alpha\in(0,1)$, let $z_{1-\alpha/2}$ be the $1-\alpha/2$ quantile of  $\mathcal{N}(0,1)$. Then
\[
\lim_{n\rightarrow\infty}\mathbb{P}\!\left[
B^-_{n,j}(p,\beta)    \leq f(\sigma_{n,t}^p) 
    \leq B^+_{n,j}(p,\beta)\right] = 1-\alpha,
\]
where 
\[
B_{n,j}^{\pm}(p,\beta) \equiv f\!\big(\hat{\sigma}_{n,j}(p,\beta)\big)\;\pm\; 
k_n^{-1/2} z_{1-\alpha/2}{f'\!\big(\hat{\sigma}_{n,j}(p,\beta)\big)\,\hat{\sigma}_{n,j}(p,\beta)\,
\sqrt{\frac{c_{\beta}(2p)}{c_{\beta}(p)^2}-1}}.
\]
\end{corollary}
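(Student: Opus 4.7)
The plan is to deduce the corollary directly from the strong approximation in Theorem \ref{th:large-k-1} by combining it with a classical central limit theorem applied to the coupling variable $\Xi_{n,j}(p,\beta)$. Theorem \ref{th:large-k-1} already tells us that
\[
T_{n,j} \equiv \frac{k_{n}^{1/2}\bigl(f(\hat{\sigma}_{n,j}(p,\beta)) - f(\sigma_{n,t}^p)\bigr)}{f'(\hat{\sigma}_{n,j}(p,\beta))\,\hat{\sigma}_{n,j}(p,\beta)} = \Xi_{n,j}(p,\beta) + o_{\mathbb{P}}(\Delta_n^{\varepsilon}),
\]
so everything reduces to identifying the limiting distribution of $\Xi_{n,j}(p,\beta)$.

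First, I would exploit the self-similarity of the stable process: for each $i\in\mathcal{I}_{n,j}$, the scaling property gives $\Delta_i^n Z \stackrel{\mathcal{L}}{=} \Delta_n^{1/\beta}Z_i$, where $(Z_i)$ are i.i.d.\ symmetric $\beta$-stable with $\mathbb{E}[e^{iuZ_i}]=\exp(-|u|^\beta/2)$. Consequently, the summands $\xi_i \equiv |\Delta_i^n Z|^p/(c_\beta(p)\Delta_n^{p/\beta})-1$ are i.i.d., centered by construction (since $c_\beta(p)=\mathbb{E}[|Z_1|^p]$), and, because $2p<\beta$, square-integrable with variance $\mathbb{E}[|Z_1|^{2p}]/c_\beta(p)^2-1=c_\beta(2p)/c_\beta(p)^2-1$ using the closed-form expression \eqref{eq:constant-c_p}. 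The Lindeberg–Lévy CLT then yields
\[
\Xi_{n,j}(p,\beta) = \frac{1}{k_n^{1/2}}\sum_{i\in\mathcal{I}_{n,j}} \xi_i \xrightarrow{\mathcal{L}} \mathcal{N}\!\left(0,\frac{c_\beta(2p)}{c_\beta(p)^2}-1\right),
\]
as $k_n\to\infty$. Combining this with the approximation from Theorem \ref{th:large-k-1} and Slutsky's lemma gives $T_{n,j}\xrightarrow{\mathcal{L}} \mathcal{N}(0,c_\beta(2p)/c_\beta(p)^2-1)$.

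Finally, I would translate this CLT into the CI statement. Dividing by $\sqrt{c_\beta(2p)/c_\beta(p)^2-1}$ produces an asymptotically standard normal pivot, so
\[
\mathbb{P}\!\left[\left|T_{n,j}\right|\leq z_{1-\alpha/2}\sqrt{\tfrac{c_\beta(2p)}{c_\beta(p)^2}-1}\right]\to 1-\alpha.
\]
Rearranging this inequality in terms of $f(\sigma_{n,t}^p)$ yields the two-sided bounds $B_{n,j}^\pm(p,\beta)$, which is exactly the claimed coverage statement.

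The main obstacle is genuinely minor: it is the bookkeeping to verify that the summands $\xi_i$ have the advertised variance, which requires only an appeal to the closed-form moment identity \eqref{eq:constant-c_p} and the condition $2p<\beta$ to guarantee $\mathbb{E}[|Z_1|^{2p}]<\infty$. No further delta-method argument is required because Theorem \ref{th:large-k-1} already incorporates the transformation $f$ through the denominator $f'(\hat{\sigma}_{n,j}(p,\beta))\hat{\sigma}_{n,j}(p,\beta)$; in particular, one does not need to separately check continuity or positivity of $\hat{\sigma}_{n,j}(p,\beta)$ beyond what is implicit in the validity of that theorem, since $\hat{\sigma}_{n,j}(p,\beta)\to \sigma_{n,t}^p/\Delta_n^{p/\beta}\cdot\Delta_n^{p/\beta}>0$ under Assumption \ref{assumption}.
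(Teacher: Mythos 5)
Your proposal is correct and follows essentially the same route as the paper, which establishes the corollary by applying the classical CLT to the i.i.d.\ coupling variable $\Xi_{n,j}(p,\beta)$ (whose variance $c_\beta(2p)/c_\beta(p)^2-1$ is finite precisely because $2p<\beta$), combining it with the strong approximation of Theorem \ref{th:large-k-1} via Slutsky's lemma, and inverting the resulting pivot. The only cosmetic caveat is that the inversion step implicitly uses $f'(\hat{\sigma}_{n,j}(p,\beta))\,\hat{\sigma}_{n,j}(p,\beta)>0$, which holds for both admissible transformations since $\hat{\sigma}_{n,j}(p,\beta)>0$ almost surely.
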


\begin{remark}
\label{Re:p=beta/2}
Theorem \ref{th:large-k-1} and Corollary \ref{cor:large-k} pertain to cases with $p<\beta/2$. When $p=\beta/2$, we can show that the t-statistic can be strongly approximated by
\begin{align*}
     \Xi_{n,j}(\beta)
    \;\equiv\;
    \frac{1}{\sqrt{k_{n}\log(k_{n})}} 
    \sum_{i\in\mathcal{I}_{n,j}}
    \left(
      \frac{|\Delta_i^n Z|^{\beta/2}}{c_{\beta}(\beta/2)\,\Delta_n^{1/2}}
      -1
    \right).
\end{align*}
Moreover, by applying a generalized CLT
(see Theorem~3.12 (b) in \cite{nolan2020univariate}), it follows that
\begin{align*}
  \Xi_{n,j}(\beta)
  \xrightarrow{\mathcal{L}} 
	\mathcal{N}\left(0, \frac{\Gamma(\beta)\sin(\pi\beta/2)}
  {2\pi\,c_{\beta}(\beta/2)^{2}}\right).
\end{align*}
CIs can be constructed accordingly using this asymptotic Gaussian approximation at the slightly faster convergence rate $\sqrt{k_n\log(k_n)}$. The technical details of this boundary case is omitted for brevity.
\end{remark}

We next turn to the case with $\beta/2<p<\beta$. When $p > \beta/2$, 
$\mathbb{E}[|\Delta_i^nZ|^{2p}]=\infty$, and so, the classical CLT fails in this scenario. 
To address this issue, we develop an alternative strong approximation as shown in Theorem \ref{th:large-k-1-stable} and derive the (non-Gaussian) asymptotic distribution in Lemma  \ref{le:lim-distri-S_nj} for the coupling variable.

\begin{theorem}
    \label{th:large-k-1-stable}
   Suppose that Assumption~\ref{assumption} holds and that $\beta\in(1,2)$ is given. 
Assume also that either (i) for $p\in(\beta/2,1)$ and $k_n\asymp\Delta_n^{-\gamma}$ such that 
\[
0<\gamma<
\frac{1-\frac{1}{\beta}}{\,\frac{1}{p}-\frac{1}{\beta}\,}
\wedge
\frac{\kappa}{\,\kappa+\frac{1}{p}-\frac{1}{\beta}\,},
\]
or (ii) for $p\in[1,\beta)$ and $k_n\asymp\Delta_n^{-\gamma}$ such that 
\[
0<\gamma<
\frac{\kappa}{\,\kappa+1-\frac{p}{\beta}\,}.
\] 
Then, {\color{black} for the transformations $f(x) = \log(x)$ or $f(x)=x^r$ with $r>0$}, 
there exists $\varepsilon>0$ such that
\begin{align*}
   \dfrac{k_{n}^{1-p/\beta}(f(\hat{\sigma}_{n,j}(p,\beta)) - f(\sigma_{n,t}^p))}{ f'(\hat{\sigma}_{n,j}(p,\beta)) \hat{\sigma}_{n,j}(p,\beta)} - {S}_{n,j}(p,\beta) = o_{\mathbb{P}}(\Delta_n^{\varepsilon}),
\end{align*}
where
\begin{align*}
    {S}_{n,j}(p,\beta) \equiv \frac{1}{k_{n}^{p/\beta}} 
   \sum_{i\in \mathcal{I}_{n,j}} 
    \left(\frac{|\Delta_i^nZ|^p}{c_{\beta}(p)\,\Delta_n^{p/\beta}}-1\right).
\end{align*}
\end{theorem}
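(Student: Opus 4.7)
The plan is to follow the same general template used for Theorem \ref{th:large-k-1}, but with the Gaussian normalization $k_n^{1/2}$ replaced by the stable-CLT normalization $k_n^{1-p/\beta}$. Setting $R_{n,j} \equiv \hat{\sigma}_{n,j}(p,\beta)/\sigma_{n,t}^p - 1$, the argument reduces to two main steps: (a) linearize the smooth transformation $f$ so that the studentized statistic equals $k_n^{1-p/\beta}R_{n,j}$ up to a negligible remainder, and (b) couple $k_n^{1-p/\beta}R_{n,j}$ to $S_{n,j}(p,\beta)$ by replacing $|\Delta_i^n X|^p/\sigma_t^p$ with $|\Delta_i^n Z|^p$ term by term.

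For step (a), using Taylor's theorem for $f(x)=\log x$ or $f(x)=x^r$ at $\sigma_{n,t}^p$, and dividing through by $f'(\hat{\sigma}_{n,j}(p,\beta))\hat{\sigma}_{n,j}(p,\beta)$, the leading term is exactly $k_n^{1-p/\beta}R_{n,j}$, with a quadratic remainder of order $k_n^{1-p/\beta}R_{n,j}^2$. Since Lemma \ref{le:lim-distri-S_nj} will show $S_{n,j}(p,\beta)$ converges to a non-degenerate stable law, one anticipates $R_{n,j}=O_\mathbb{P}(k_n^{-(1-p/\beta)})$, so the remainder is $O_\mathbb{P}(k_n^{-(1-p/\beta)})=O_\mathbb{P}(\Delta_n^{\gamma(1-p/\beta)})$, which is $o_\mathbb{P}(\Delta_n^\varepsilon)$ for any $\varepsilon<\gamma(1-p/\beta)$. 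The consistency $R_{n,j}\to 0$ needed to justify the Taylor expansion itself follows from the estimates in step (b) combined with the constraints on $\gamma$.

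For step (b), the direct identity
\[
k_n^{1-p/\beta}R_{n,j}-S_{n,j}(p,\beta)
=\frac{1}{c_\beta(p)\,k_n^{p/\beta}\,\Delta_n^{p/\beta}}
\sum_{i\in\mathcal{I}_{n,j}}
\Bigl(\tfrac{|\Delta_i^n X|^p}{\sigma_t^p}-|\Delta_i^n Z|^p\Bigr)
\]
reduces the task to bounding the right-hand side. I would use the local approximation $\Delta_i^n X\approx\sigma_{(i-1)\Delta_n}\Delta_i^n Z$, with the residual decomposing into a drift contribution of size $\Delta_n$ and a volatility-variation term $\int_{(i-1)\Delta_n}^{i\Delta_n}(\sigma_{s-}-\sigma_{(i-1)\Delta_n})dZ_s$ of size $\Delta_n^{\kappa+1/\beta}$ in any $L^q$-norm with $q<\beta$, via Assumption \ref{assumption}(ii) and standard moment bounds for stable integrals. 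The two stipulated regimes on $p$ then require slightly different inequalities: for $p\in[1,\beta)$ I would apply $\bigl||a|^p-|b|^p\bigr|\le p|a-b|(|a|^{p-1}+|b|^{p-1})$ together with Hölder's inequality at a fractional exponent, while for $p\in(\beta/2,1)$ the sub-additive bound $\bigl||a|^p-|b|^p\bigr|\le |a-b|^p$ is directly applicable. After summing $k_n$ terms and dividing by $c_\beta(p)k_n^{p/\beta}\Delta_n^{p/\beta}$, the resulting rate is exactly the one dictated by the upper bounds on $\gamma$ in the statement, and is $o_\mathbb{P}(\Delta_n^\varepsilon)$ for sufficiently small $\varepsilon>0$.

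The principal obstacle is the infinite-variance regime: since $p>\beta/2$, one cannot use $L^2$ isometries for stable integrals and must instead work with $L^q$ moments for $q<\beta$ (or $q<\beta/p$). The two distinct bounds on $\gamma$---with the ``active jump dominance'' term $(1-1/\beta)/(1/p-1/\beta)$ appearing only when $p<1$---reflect precisely this moment constraint interacting differently with the two power-inequalities above, and the matching of rates across the drift, volatility-variation, and main-stable contributions is where the bookkeeping must be done most carefully.
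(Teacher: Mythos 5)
Your proposal follows essentially the same route as the paper's proof: the same reduction to a term-by-term coupling of $|\Delta_i^n X|^p$ with $|\sigma_\cdot\,\Delta_i^n Z|^p$ plus a block-level discretization error in $\sigma$, the same pair of power-difference inequalities ($\bigl||a|^p-|b|^p\bigr|\le|a-b|^p$ for $p\in(\beta/2,1)$ versus the mean-value bound with H\"older for $p\in[1,\beta)$) combined with the stable-integral moment estimate of Lemma~\ref{le:Mom-esti}, and the same delta-method step for $f$ carried over from Theorem~\ref{th:large-k-1}. The only immaterial difference is that you anchor the local approximation at $\sigma_{(i-1)\Delta_n}$ rather than at the block start $\sigma_{t_{(n,j)}}$; the resulting rates and the constraints on $\gamma$ are identical.
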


Theorem \ref{th:large-k-1-stable} establishes strong approximations for the logarithmic and power transformations of the $t$-statistic associated with the spot volatility estimator in the non-classical setting. To characterize the limiting distribution of the coupling variable ${S}_{n,j}(p,\beta)$, we denote by $\mathcal{S}(\eta,\delta,\theta,\mu)$ the general stable distribution with activity index $\eta\in(0,2)$, skewness parameter $\delta\in[-1,1]$, scale parameter $\theta \ge 0$, and location parameter $\mu \in \mathbb{R}$. Specifically, if a random variable $\xi$ follows a stable distribution $\mathcal{S}(\eta,\delta,\theta,\mu)$, its characteristic function is given by (see Definition~1.5 in \cite{nolan2020univariate})
\begin{align*}
	\mathbb{E}[e^{iu\xi}] =
	\begin{cases}
		\exp\!\left\{-\theta^\eta |u|^\eta \left[1 - i\delta \, \left(\tan \frac{\pi \eta}{2}\right)\,(\operatorname{sign} u)\right] + i\mu u\right\}, 
		& \text{if } \eta \neq 1,
		\\[0.3cm]
		\exp\!\left\{-\theta |u| \left[1 + i\delta \frac{2}{\pi} (\operatorname{sign} u) \log |u|\right] + i\mu u\right\}, 
		& \text{if } \eta = 1,
	\end{cases}
\end{align*}
for all $u \in \mathbb{R}$.

\begin{lemma}
    \label{le:lim-distri-S_nj}
For any  $\beta\in(0,2)$ and $p\in(\beta/2,\beta)$,
$
{S}_{n,j}(p,\beta)\xrightarrow{\mathcal{L}} \mathcal{S}\left({\beta}/{p},1,C_{\beta}(p)^{-1},0\right),
$
 where 
\begin{align*}
    C_{\beta}(p) \equiv   c_{\beta}(p)\left(\dfrac{2\Gamma(\beta/p)\sin(\pi\beta/(2p))}{\Gamma(\beta)\sin(\pi\beta/2)}\right)^{p/\beta}.
\end{align*}
\end{lemma}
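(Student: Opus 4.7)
The plan is to reduce $S_{n,j}(p,\beta)$ to a centered i.i.d.\ sum with regularly varying one-sided tail, and then invoke the generalized CLT. By the scaling property of the symmetric $\beta$-stable process, the increments $(\Delta_i^n Z)_{i\in\mathcal{I}_{n,j}}$ have the same joint law as $(\Delta_n^{1/\beta} Z_i)$, where $(Z_i)$ are i.i.d.\ copies of $Z_1$. Setting $\xi_i \equiv |Z_i|^p/c_\beta(p) - 1$, which has mean zero by the very definition $c_\beta(p)=\mathbb{E}[|Z_1|^p]$, one obtains
\[
S_{n,j}(p,\beta) \;\overset{\mathcal{L}}{=}\; k_n^{-p/\beta}\sum_{i=1}^{k_n}\xi_i.
\]
Since $p\in(\beta/2,\beta)$, the exponent $\alpha\equiv\beta/p$ lies in $(1,2)$, and the normalization $k_n^{p/\beta}=k_n^{1/\alpha}$ is precisely the canonical rate for i.i.d.\ sums in the domain of attraction of an $\alpha$-stable law with finite mean.

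The key analytic input is a one-sided regular-variation estimate for $\xi_1$. Using the classical symmetric-stable tail $\mathbb{P}(|Z_1|>y)\sim \pi^{-1}\Gamma(\beta)\sin(\pi\beta/2)\,y^{-\beta}$, valid under the paper's parameterization $\mathbb{E}[e^{iuZ_1}]=\exp(-|u|^\beta/2)$, the change of variables $y=(c_\beta(p)x)^{1/p}$ yields
\[
\mathbb{P}(\xi_1>x) \;\sim\; \frac{\Gamma(\beta)\sin(\pi\beta/2)}{\pi}\,c_\beta(p)^{-\beta/p}\,x^{-\beta/p},
\]
while the left tail vanishes identically for $x\ge 1$ since $\xi_1\ge -1$. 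Thus $\xi_1$ lies in the totally right-skewed domain of attraction of an $\alpha = \beta/p$ stable law, and the generalized CLT (e.g., Theorem~3.12(a) of \cite{nolan2020univariate}) delivers $k_n^{-p/\beta}\sum_{i=1}^{k_n}\xi_i \xrightarrow{\mathcal{L}} \mathcal{S}(\beta/p,\,1,\,\theta,\,0)$ for some scale $\theta$; the location is $0$ because $\alpha>1$ and the summands are already centered, so no extra shift arises.

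The final step is to identify $\theta$ explicitly. In the paper's parameterization, a random variable $\eta \sim \mathcal{S}(\alpha,1,\theta,0)$ satisfies $\mathbb{P}(\eta>x)\sim 2\pi^{-1}\theta^\alpha\Gamma(\alpha)\sin(\pi\alpha/2)\,x^{-\alpha}$, so equating this with the tail constant of the normalized sum (which by the standard tail-to-norming-sequence correspondence equals the tail constant of $\xi_1$) gives
\[
\theta^{\beta/p}
\;=\; c_\beta(p)^{-\beta/p}\,\frac{\Gamma(\beta)\sin(\pi\beta/2)}{2\,\Gamma(\beta/p)\sin(\pi\beta/(2p))},
\]
which is exactly $C_\beta(p)^{-\beta/p}$ by the definition of $C_\beta(p)$ in the statement. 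The principal obstacle is bookkeeping on constants: several conventions for stable laws coexist in the literature ($S_0$, $S_1$, Samoradnitsky--Taqqu, Zolotarev), so one must carefully align the parameterization of $\mathcal{S}(\cdot,\cdot,\cdot,\cdot)$ used in the statement with that assumed in the invoked CLT, and propagate the factor of $1/2$ in $\mathbb{E}[e^{iuZ_1}]=\exp(-|u|^\beta/2)$ together with the factor of $2$ from the tail of a one-sided stable consistently through to the final expression. Once this is done, the ratio $\Gamma(\beta/p)\sin(\pi\beta/(2p))/(\Gamma(\beta)\sin(\pi\beta/2))$ emerges naturally as the ratio of universal stable tail constants at indices $\beta/p$ and $\beta$, matching the stated form of $C_\beta(p)$.
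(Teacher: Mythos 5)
Your proposal is correct and follows essentially the same route as the paper: reduce $S_{n,j}(p,\beta)$ by self-similarity to a centered i.i.d.\ sum, compute the regularly varying right tail of $|Z_1|^p/c_\beta(p)$ from the standard symmetric-stable tail asymptotic, and invoke the generalized CLT (the same Theorem~3.12(a) of Nolan) to obtain a totally right-skewed $\beta/p$-stable limit. The only cosmetic difference is that you identify the scale $C_\beta(p)^{-1}$ by matching tail constants of the limit law, whereas the paper reads it off directly from the norming sequences $a_n$, $b_n$ supplied by that theorem; both yield the same constant.
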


Lemma \ref{le:lim-distri-S_nj} shows that the limiting distribution of $S_{n,j}(p,\beta)$ is a totally right-skewed $\beta/p$-stable distribution,  
$
\mathcal{S}\!\left({\beta}/{p},\,1,\,C_{\beta}(p)^{-1},\,0\right),
$
with activity index $\beta/p \in (1,2)$ for $p \in (\beta/2,\beta)$. Combining this with Theorem \ref{th:large-k-1-stable}, we obtain
\begin{align*}
	\dfrac{k_{n}^{\,1-p/\beta}\,\bigl(f(\hat{\sigma}_{n,j}(p,\beta)) - f(\sigma_{n,t}^p)\bigr)}
	{ f'(\hat{\sigma}_{n,j}(p,\beta)) \,\hat{\sigma}_{n,j}(p,\beta)}
	\;\xrightarrow{\mathcal{L}}\;
	\mathcal{S}\!\left(\frac{\beta}{p},\,1,\,C_{\beta}(p)^{-1},\,0\right).
\end{align*}
This stable limit then yields a CI for $f(\sigma_{n,t}^p)$, as stated in the following corollary.

\begin{corollary}
\label{cor:large-k-stable}
Suppose that the conditions in {Theorem \ref{th:large-k-1-stable}} hold. For any $\alpha\in(0,1)$, let ${q}^L_{\alpha}(p,\beta)$ and $q^U_{\alpha}(p,\beta)$ be constants such that $\mathbb{P}[{q}^L_{\alpha}(p,\beta)\leq\zeta\leq {q}^U_{\alpha}(p,\beta)]=1-\alpha$
where $\zeta$ is a generic $\mathcal{S}({\beta}/{p},1,C_{\beta}(p)^{-1},0)$-distributed random variable. Then we obtain
\[
\lim_{n\rightarrow\infty}\mathbb{P}\!\left[
    B^{(s)-}_{n,j}(p,\beta)\leq f(\sigma_{n,t}^p) \leq B^{(s)+}_{n,j}(p,\beta)\right] = 1-\alpha,
\]
where 
\[
B_{n,j}^{(s)-}(p,\beta) \equiv f(\hat{\sigma}_{n,j}(p,\beta))-\frac{ f'(\hat{\sigma}_{n,j}(p,\beta)) \hat{\sigma}_{n,j}(p,\beta){q}^U_{\alpha}(p,\beta)}{k_{n}^{1-p/\beta}}, 
\]
\[
B_{n,j}^{(s)+}(p,\beta) \equiv f(\hat{\sigma}_{n,j}(p,\beta))-\frac{ f'(\hat{\sigma}_{n,j}(p,\beta)) \hat{\sigma}_{n,j}(p,\beta){q}^L_{\alpha}(p,\beta)}{k_{n}^{1-p/\beta}}. 
\]
\end{corollary}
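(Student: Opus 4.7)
The plan is to combine the strong approximation from Theorem \ref{th:large-k-1-stable} with the stable limit from Lemma \ref{le:lim-distri-S_nj} via Slutsky's theorem, then rearrange the resulting probability statement into an interval on $f(\sigma_{n,t}^p)$. Throughout, write
\[
T_{n,j} \equiv \frac{k_n^{\,1-p/\beta}\bigl(f(\hat{\sigma}_{n,j}(p,\beta)) - f(\sigma_{n,t}^p)\bigr)}{f'(\hat{\sigma}_{n,j}(p,\beta))\,\hat{\sigma}_{n,j}(p,\beta)}
\]
for the studentized quantity of interest.

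First, Theorem \ref{th:large-k-1-stable} gives $T_{n,j} - S_{n,j}(p,\beta) = o_{\mathbb{P}}(\Delta_n^{\varepsilon}) = o_{\mathbb{P}}(1)$. Next, Lemma \ref{le:lim-distri-S_nj} gives $S_{n,j}(p,\beta) \xrightarrow{\mathcal{L}} \zeta$, where $\zeta \sim \mathcal{S}(\beta/p, 1, C_{\beta}(p)^{-1}, 0)$. Slutsky's theorem then yields $T_{n,j} \xrightarrow{\mathcal{L}} \zeta$. Since $\beta/p \in (1,2)$ and the scale parameter $C_{\beta}(p)^{-1}$ is strictly positive, $\zeta$ has a continuous Lebesgue density (a classical property of non-degenerate stable laws), so every real number is a continuity point of its distribution function. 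In particular, the chosen quantiles $q^L_{\alpha}(p,\beta)$ and $q^U_{\alpha}(p,\beta)$ are continuity points, so
\[
\mathbb{P}\bigl[q^L_{\alpha}(p,\beta) \leq T_{n,j} \leq q^U_{\alpha}(p,\beta)\bigr] \longrightarrow \mathbb{P}\bigl[q^L_{\alpha}(p,\beta) \leq \zeta \leq q^U_{\alpha}(p,\beta)\bigr] = 1-\alpha.
\]

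Finally, I would rearrange the inequalities defining the event. For both admissible transformations, $f'(\hat{\sigma}_{n,j}(p,\beta))\,\hat{\sigma}_{n,j}(p,\beta)$ is strictly positive on the event $\{\hat{\sigma}_{n,j}(p,\beta) > 0\}$: for $f(x)=\log x$ it equals $1$, and for $f(x)=x^{r}$ it equals $r\,\hat{\sigma}_{n,j}(p,\beta)^{r}$. Moreover, $\hat{\sigma}_{n,j}(p,\beta) > 0$ almost surely as a sum of absolute power increments, so the denominator is strictly positive with probability one. Multiplying through by this positive quantity and subtracting $f(\hat{\sigma}_{n,j}(p,\beta))$, the event $\{q^L_{\alpha}\leq T_{n,j}\leq q^U_{\alpha}\}$ is rewritten as $\{B^{(s)-}_{n,j}(p,\beta) \leq f(\sigma_{n,t}^p) \leq B^{(s)+}_{n,j}(p,\beta)\}$, which delivers the claim.

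This proof is essentially a short corollary-style assembly; there is no single hard step, since Theorem \ref{th:large-k-1-stable} and Lemma \ref{le:lim-distri-S_nj} do all the real work. The only points that require a moment's care are (i) verifying that the stable limit $\zeta$ is continuously distributed so that the quantile-based probability passes to the limit, and (ii) checking the sign of $f'(\hat{\sigma}_{n,j}(p,\beta))\hat{\sigma}_{n,j}(p,\beta)$ so that the inversion of the inequality correctly swaps the roles of $q^{L}_\alpha$ and $q^{U}_\alpha$ in producing the upper and lower endpoints $B^{(s)+}_{n,j}$ and $B^{(s)-}_{n,j}$.
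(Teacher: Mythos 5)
Your proposal is correct and follows exactly the route the paper takes: it combines the strong approximation of Theorem \ref{th:large-k-1-stable} with the stable limit of Lemma \ref{le:lim-distri-S_nj} to conclude that the studentized statistic converges in law to $\mathcal{S}(\beta/p,1,C_{\beta}(p)^{-1},0)$, and then inverts the quantile inequalities (noting the sign of $f'(\hat{\sigma}_{n,j}(p,\beta))\hat{\sigma}_{n,j}(p,\beta)$) to obtain the stated interval. The paper treats this as immediate from the displayed convergence preceding the corollary; your added checks on the continuity of the stable distribution function and the positivity of the denominator are the right details to verify.
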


\subsection{The case with possibly inactive jumps}
\label{sect:large-k-infeasible-2}

As in Section~\ref{sect:fixed-k-infeasible-2}, we employ a second-order difference-type estimator for $\sigma_{n,t}$ to accommodate the possibility of inactive jumps in the presence of drift. Specifically, the scaled spot volatility estimator for the $j$-th block is defined as
\begin{align}
	\label{eq:large-k-estimator-2}
	\tilde{\sigma}_{n,j}(p,\beta)
	\equiv 
	\frac{2}{\tilde{c}_{\beta}(p)\,k_{n}}
	\sum_{i \in \tilde{\mathcal{I}}_{n,j}}
	\bigl|\Delta_{2i}^n X - \Delta_{2i-1}^n X\bigr|^{p},\,\, 0 < p < \beta,
\end{align}
where $\tilde{\mathcal{I}}_{n,j} \equiv \{(j-1)k_n/2 + 1,\ldots,(j-1)k_n/2 + k_n/2\}$ and 
\[
\tilde{c}_{\beta}(p)
\equiv
\mathbb{E}\!\left[
\left|
\frac{\Delta_{2i}^n Z - \Delta_{2i-1}^n Z}{\Delta_n^{1/\beta}}
\right|^p
\right].
\]
By self-similarity and the independence of $\Delta_{2i}^n Z$ and $\Delta_{2i-1}^n Z$, the scaled increment 
$
\left({\Delta_{2i}^n Z - \Delta_{2i-1}^n Z}\right)/{\Delta_n^{1/\beta}}
$
is itself a symmetric $\beta$-stable random variable with distribution $\mathcal{S}(\beta,0,1,0)$.  
Analogous to the expression for $c_{\beta}(p)$ in~\eqref{eq:constant-c_p}, the constant $\tilde{c}_{\beta}(p)$ admits the explicit form (see page 163 in \cite{ken1999levy})
\begin{align}
	\label{eq:constant-tilde-c_p}
	\tilde{c}_{\beta}(p)
	=
	\frac{
		2^{p}\,
		\Gamma\!\left(\dfrac{1+p}{2}\right)
		\Gamma\!\left(1-\dfrac{p}{\beta}\right)
	}{
		\sqrt{\pi}\,
		\Gamma\!\left(1-\dfrac{p}{2}\right)
	}.
\end{align}

Theorem \ref{th:large-k-2}, stated below, provides a strong approximation for smooth transformation of the spot volatility estimator $\tilde{\sigma}_{n,j}(p,\beta)$ when $0 < p < \beta/2$. In this regime, the relevant coupling variable is a normalized sum of i.i.d.\ random variables with finite second moment, and therefore converges to a Gaussian limit by the classical CLT.

\begin{theorem}
	\label{th:large-k-2}
	Suppose that Assumption~\ref{assumption-2} holds and that 
	$\beta \in \bigl(1/(1+\tilde{\kappa}),\,2\bigr)$ is given.
	Assume also that $p\in(0,\beta/2)$ and 
	$k_{n} \asymp \Delta_n^{-\gamma}$ for some 
	\[
	0 < \gamma 
	< 
	\frac{\tilde{\kappa}+1 - \frac{1}{\beta}}{\tilde{\kappa} + \frac{1}{2p}}
	\;\wedge\;
	\frac{\kappa}{\kappa + \frac{1}{2p}}.
	\]
	Then, {\color{black} for the transformations $f(x) = \log(x)$ or $f(x)=x^r$ with $r>0$}, 
there exists $\varepsilon>0$ such that
	\[
	\dfrac{
		(k_{n}/2)^{1/2}\,
		\bigl(f(\tilde{\sigma}_{n,j}(p,\beta))
		- f(\sigma_{n,t}^{p})\bigr)
	}{
		f'(\tilde{\sigma}_{n,j}(p,\beta))\,
		\tilde{\sigma}_{n,j}(p,\beta)
	}
	\;-\;
	\tilde{\Xi}_{n,j}(p,\beta)
	\;=\;
	o_{\mathbb{P}}(\Delta_n^{\varepsilon}),
	\]
	where
	\[
	\tilde{\Xi}_{n,j}(p,\beta)
	\equiv 
	\frac{1}{(k_{n}/2)^{1/2}}
	\sum_{i \in \tilde{\mathcal{I}}_{n,j}}
	\left(
	\frac{|\Delta_{2i}^n Z - \Delta_{2i-1}^n Z|^{p}}
	{\tilde{c}_\beta(p)\,\Delta_n^{p/\beta}}
	- 1
	\right).
	\]
\end{theorem}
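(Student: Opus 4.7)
The plan is to adapt the proof strategy of Theorem~\ref{th:large-k-1} to the second-order difference structure of $\tilde{\sigma}_{n,j}(p,\beta)$, which is precisely what allows the drift to be neutralized under the weakened condition $\beta>1/(1+\tilde{\kappa})$. The argument decomposes into two layers: (a) a strong approximation for the ratio $\tilde{\sigma}_{n,j}(p,\beta)/\sigma_{n,t}^{p}-1$ by the normalized stable sum $(k_n/2)^{-1/2}\tilde{\Xi}_{n,j}(p,\beta)$, up to a remainder that, after being multiplied by $(k_n/2)^{1/2}$, is $o_{\mathbb{P}}(\Delta_n^{\varepsilon})$; and (b) a delta-method expansion transferring this approximation to the smooth transformation $f$, which is legitimate because the Taylor remainder is of the squared order $O_{\mathbb{P}}(1/k_n)$ and hence absorbed.

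For the ratio approximation, after a standard localization under Assumption~\ref{assumption-2} making $b$, $\sigma$, and $\sigma^{-1}$ uniformly bounded, I would decompose
\[
\Delta_{2i}^{n}X-\Delta_{2i-1}^{n}X = \mathcal{D}_i^{b}+\sigma_{(2i-2)\Delta_n}\bigl(\Delta_{2i}^{n}Z-\Delta_{2i-1}^{n}Z\bigr)+\mathcal{D}_i^{\sigma},
\]
where $\mathcal{D}_i^{b}$ is the differential of the drift integral across the pair (of order $O_{\mathbb{P}}(\Delta_n^{1+\tilde{\kappa}})$ by $\tilde{\kappa}$-H\"older continuity of $b$ combined with the cancellation built into the second-order difference), and $\mathcal{D}_i^{\sigma}$ is the residual from replacing $\sigma_{s-}$ by its left-endpoint value $\sigma_{(2i-2)\Delta_n}$ inside the two stochastic integrals (of order $O_{\mathbb{P}}(\Delta_n^{\kappa+1/\beta})$ in $L^{p}$ via a BDG-type bound for $\beta$-stable integrals and $\kappa$-H\"older continuity of $\sigma$). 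A power-function inequality---$\bigl||a|^{p}-|b|^{p}\bigr|\leq|a-b|^{p}$ for $p\leq 1$, and a mean-value plus H\"older argument for $p>1$---then provides $L^{q}$-control of $|\Delta_{2i}^{n}X-\Delta_{2i-1}^{n}X|^{p}-|\sigma_{(2i-2)\Delta_n}(\Delta_{2i}^{n}Z-\Delta_{2i-1}^{n}Z)|^{p}$ that is smaller than the main stable scale $\Delta_n^{p/\beta}$ by a genuine power $\Delta_n^{\eta}$. A further replacement of $\sigma_{(2i-2)\Delta_n}$ by $\sigma_t$ across the block via $\kappa$-H\"older continuity costs an extra factor $O_{\mathbb{P}}((k_n\Delta_n)^{\kappa})$. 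Averaging over $i\in\tilde{\mathcal{I}}_{n,j}$ and dividing by $\sigma_{n,t}^{p}$ identifies the leading stable sum with $(k_n/2)^{-1/2}\tilde{\Xi}_{n,j}(p,\beta)$, since $(\Delta_{2i}^{n}Z-\Delta_{2i-1}^{n}Z)/\Delta_n^{1/\beta}$ are i.i.d.\ $\mathcal{S}(\beta,0,1,0)$ by self-similarity and independence of non-overlapping increments of $Z$. For the delta-method step, a first-order Taylor expansion of $f\in\{\log(\cdot),\,(\cdot)^{r}\}$ yields
\[
\frac{f(\tilde{\sigma}_{n,j}(p,\beta))-f(\sigma_{n,t}^{p})}{f'(\tilde{\sigma}_{n,j}(p,\beta))\,\tilde{\sigma}_{n,j}(p,\beta)} = \frac{\tilde{\sigma}_{n,j}(p,\beta)}{\sigma_{n,t}^{p}}-1+O_{\mathbb{P}}\!\left(\left(\frac{\tilde{\sigma}_{n,j}(p,\beta)}{\sigma_{n,t}^{p}}-1\right)^{2}\right),
\]
and the quadratic remainder is $O_{\mathbb{P}}(1/k_n)$, which is absorbed into $o_{\mathbb{P}}((k_n/2)^{-1/2}\Delta_n^{\varepsilon})$ for any $\varepsilon<\gamma/2$.

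The main obstacle is the precise $L^{q}$ accounting so that the scaled remainder $(k_n/2)^{1/2}R_n$ is $o_{\mathbb{P}}(\Delta_n^{\varepsilon})$ exactly under the two-sided constraint on $\gamma$. The first clause $\gamma<(\tilde{\kappa}+1-1/\beta)/(\tilde{\kappa}+1/(2p))$ is dictated by the drift contribution: after being raised to the $p$-th power and normalized by $\Delta_n^{p/\beta}$, the per-pair drift error produces $\Delta_n^{p(1+\tilde{\kappa}-1/\beta)}$ per summand, and balancing this against the norming factor $k_n^{1/2}\asymp\Delta_n^{-\gamma/2}$ gives exactly the stated bound; this is also the step that forces $\beta>1/(1+\tilde{\kappa})$. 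The second clause $\gamma<\kappa/(\kappa+1/(2p))$ arises analogously from the volatility-smoothness error, where $(k_n\Delta_n)^{p\kappa}=\Delta_n^{p(1-\gamma)\kappa}$ must dominate $\Delta_n^{\gamma/2}$. Because $p<\beta/2$, all relevant $p$-th and $2p$-th moments of the stable integrals are finite, which (unlike the heavy-tail regime of Theorem~\ref{th:large-k-1-stable}) lets one invoke the classical BDG inequality for $\beta$-stable integrals without resorting to moment-truncation arguments, and it is also what guarantees that $\tilde{\Xi}_{n,j}(p,\beta)$ has a Gaussian limit via the classical CLT.
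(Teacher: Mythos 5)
Your proposal follows essentially the same route as the paper's proof: the same three-part decomposition (drift error killed by the second-order differencing together with the H\"older continuity of $b$, volatility-replacement error controlled by the $L^p$-moment bound for $\beta$-stable integrals with $p<\beta$, and a mean-value/delta-method step whose quadratic remainder becomes $O_{\mathbb{P}}(\Delta_n^{\gamma/2})$ after scaling by $(k_n/2)^{1/2}$), with the leading term identified via self-similarity exactly as in the paper. The only cosmetic differences are that you center $b$ and $\sigma$ at the pair start rather than the block start $t_{(n,j)}$ (which, if anything, yields a slightly sharper drift bound than the stated $\gamma$-constraint requires, so your claim that the balance gives ``exactly'' the stated bound is a harmless overstatement), and that the moment estimate you call a ``BDG-type bound'' is in the paper the tail-integration estimate of Lemma~\ref{le:Mom-esti} rather than a genuine Burkholder--Davis--Gundy inequality.
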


When $p < \beta/2$, the coupling variable $\tilde{\Xi}_{n,j}(p,\beta)$ is a normalized sum of i.i.d.\ random variables with finite second moment. By the classical CLT,
\[
\tilde{\Xi}_{n,j}(p,\beta)
\xrightarrow{\mathcal{L}}
\mathcal{N}\!\left(0,\frac{\tilde{c}_{\beta}(2p)}{\tilde{c}_{\beta}(p)^2}-1 \right).
\]
Combined with Theorem~\ref{th:large-k-2}, this yields
\begin{align*}
	\dfrac{
		(k_{n}/2)^{1/2}\,\bigl(f(\tilde{\sigma}_{n,j}(p,\beta)) - f(\sigma_{n,t}^{p})\bigr)
	}{
		f'(\tilde{\sigma}_{n,j}(p,\beta))\,\tilde{\sigma}_{n,j}(p,\beta)
	}
	\xrightarrow{\mathcal{L}}
	\mathcal{N}\!\left(0,\frac{\tilde{c}_{\beta}(2p)}{\tilde{c}_{\beta}(p)^2}-1 \right).
\end{align*}
This asymptotic Gaussian approximation allows for the construction of a CI for $f(\sigma_{n,t}^p)$, as stated in the following corollary.
\begin{corollary}
	\label{cor:large-k-2}
	Suppose that the conditions in {Theorem \ref{th:large-k-2}} hold. 
	For any $\alpha \in (0,1)$, let $z_{1-\alpha/2}$ denote the $1-\alpha/2$ quantile of $\mathcal{N}(0,1)$. Then
	\[
	\lim_{n\to\infty}
	\mathbb{P}\!\left[
	\tilde{B}_{n,j}^-(p,\beta)
	\;\le\;
	f(\sigma_{n,t}^p)
	\;\le\;
	\tilde{B}_{n,j}^+(p,\beta)
	\right]
	= 1-\alpha,
	\]
	where
	\[
	\tilde{B}_{n,j}^\pm(p,\beta)
	\equiv 
	f(\tilde{\sigma}_{n,j}(p,\beta))
	\pm
		(k_{n}/2)^{-1/2}\,
	z_{1-\alpha/2}
	f'(\tilde{\sigma}_{n,j}(p,\beta))\,\tilde{\sigma}_{n,j}(p,\beta)
	\sqrt{\tfrac{\tilde{c}_{\beta}(2p)}{\tilde{c}_{\beta}(p)^2}-1}.
	\]
\end{corollary}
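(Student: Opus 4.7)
The plan is to treat this corollary as an essentially immediate consequence of Theorem~\ref{th:large-k-2} combined with a routine invocation of the classical Lindeberg--L\'evy CLT for the coupling variable $\tilde{\Xi}_{n,j}(p,\beta)$. Since Theorem~\ref{th:large-k-2} already bridges the t-statistic on the left-hand side to the i.i.d.\ normalized sum $\tilde{\Xi}_{n,j}(p,\beta)$ with coupling error $o_{\mathbb{P}}(\Delta_n^{\varepsilon})=o_{\mathbb{P}}(1)$, the entire task reduces to identifying the limiting law of $\tilde{\Xi}_{n,j}(p,\beta)$ and then rearranging the resulting asymptotic probability statement into a two-sided interval for $f(\sigma_{n,t}^p)$.

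First I would verify that the summands in $\tilde{\Xi}_{n,j}(p,\beta)$ form an i.i.d.\ sequence. Writing $U_i \equiv (\Delta_{2i}^n Z - \Delta_{2i-1}^n Z)/\Delta_n^{1/\beta}$, note that for distinct indices $i \in \tilde{\mathcal{I}}_{n,j}$ the pairs $(2i-1,2i)$ involve disjoint increments of the stable process $Z$, hence the $U_i$ are i.i.d.\ $\mathcal{S}(\beta,0,1,0)$ by the independent-increments and self-similarity properties noted in the paragraph preceding~\eqref{eq:constant-tilde-c_p}. Consequently $|U_i|^{p}/\tilde{c}_{\beta}(p)-1$ has mean zero by the very definition of $\tilde{c}_{\beta}(p)$. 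Under the hypothesis $p\in(0,\beta/2)$ we have $2p<\beta$, so the stable moment $\mathbb{E}[|U_i|^{2p}]=\tilde{c}_{\beta}(2p)$ is finite (this is precisely where the $p<\beta/2$ assumption is used), and the variance of each summand equals $\tilde{c}_{\beta}(2p)/\tilde{c}_{\beta}(p)^{2}-1$.

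Next I would apply the classical CLT to the $k_n/2$ i.i.d.\ mean-zero summands normalized by $(k_n/2)^{1/2}$ to obtain
\[
\tilde{\Xi}_{n,j}(p,\beta)\xrightarrow{\mathcal{L}}\mathcal{N}\!\left(0,\frac{\tilde{c}_{\beta}(2p)}{\tilde{c}_{\beta}(p)^{2}}-1\right).
\]
Combining this with the strong approximation in Theorem~\ref{th:large-k-2} (via a standard $o_{\mathbb{P}}(1)$ plus weakly-convergent argument, e.g.\ Slutsky's theorem) transfers the same Gaussian limit to the t-statistic
\[
T_n \equiv \frac{(k_n/2)^{1/2}\bigl(f(\tilde{\sigma}_{n,j}(p,\beta))-f(\sigma_{n,t}^{p})\bigr)}{f'(\tilde{\sigma}_{n,j}(p,\beta))\,\tilde{\sigma}_{n,j}(p,\beta)}.
\]

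Finally I would invert this distributional statement. Using the symmetry of the centered normal limit, for any $\alpha\in(0,1)$,
\[
\mathbb{P}\!\left[\bigl|T_n\bigr|\le z_{1-\alpha/2}\sqrt{\tfrac{\tilde{c}_{\beta}(2p)}{\tilde{c}_{\beta}(p)^{2}}-1}\right]\to 1-\alpha,
\]
and a direct algebraic rearrangement of the event inside the probability produces the two-sided envelope $\tilde{B}_{n,j}^{\pm}(p,\beta)$ as defined in the statement, which is exactly the desired conclusion. The only place where any care is really needed is confirming the finite-variance condition $p<\beta/2$ and checking that the $o_{\mathbb{P}}(\Delta_n^{\varepsilon})$ coupling error from Theorem~\ref{th:large-k-2} does not distort the Gaussian limit; beyond these sanity checks the corollary is a direct consequence of the theorem and the classical CLT, and I expect no substantive obstacle.
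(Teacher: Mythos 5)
Your proposal is correct and follows essentially the same route the paper takes: the corollary is obtained by applying the classical CLT to the i.i.d.\ mean-zero summands of $\tilde{\Xi}_{n,j}(p,\beta)$ (whose variance $\tilde{c}_{\beta}(2p)/\tilde{c}_{\beta}(p)^2-1$ is finite precisely because $2p<\beta$), transferring the Gaussian limit to the $t$-statistic via the coupling in Theorem~\ref{th:large-k-2}, and inverting the symmetric two-sided event. The paper treats this as an immediate consequence in the discussion preceding the corollary, and your write-up fills in exactly those steps with no substantive deviation.
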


\begin{remark} 
Theorem \ref{th:large-k-2} and Corollary \ref{cor:large-k-2} pertain to cases with $0<p<\beta/2$. 
Analogous to Remark \ref{Re:p=beta/2}, when $p=\beta/2$, we can show that the 
t-statistic can be strongly approximated by
\begin{align*}
     \tilde{\Xi}_{n,j}(\beta)
    \;\equiv\;
    \frac{1}{\sqrt{(k_{n}/2)\log(k_{n}/2)}} 
    \sum_{i\in\tilde{\mathcal{I}}_{n,j}}
    \left(
      \frac{|\Delta_{2i}^n Z-\Delta_{2i-1}^n Z|^{\beta/2}}{\tilde{c}_{\beta}(\beta/2)\,\Delta_n^{1/2}}
      -1
    \right).
\end{align*}
Moreover, by applying a generalized CLT
(see Theorem~3.12 (b) in \cite{nolan2020univariate}), it follows that
\begin{align*}
  \tilde{\Xi}_{n,j}(\beta)
  \xrightarrow{\mathcal{L}} 
\mathcal{N}\left(0,\frac{\Gamma(\beta)\sin(\pi\beta/2)}
         {\pi\,\tilde{c}_{\beta}(\beta/2)^{2}}\right).
\end{align*}
CIs can be constructed accordingly using this asymptotic Gaussian approximation at the rate $\sqrt{k_n\log(k_n)}$.
\end{remark}

We next turn to the case with $\beta/2<p<\beta$. When $p > \beta/2$, 
$\mathbb{E}[|\Delta_{2i}^nZ-\Delta_{2i-1}^nZ|^{2p}]=\infty$, and so, the classical CLT also fails in this scenario. 
To address this issue, we develop an alternative strong approximation as shown in Theorem \ref{th:large-k-2-stable} and derive the stable asymptotic distribution in Lemma  \ref{le:lim-distri-tilde-S_nj} for the coupling variable.

\begin{theorem}
    \label{th:large-k-2-stable}
    Suppose that {Assumption \ref{assumption-2}} holds and that $\beta\in(1/(1+\tilde{\kappa}),2)$ is given. 
Assume further that
 either (i) for $p\in(\beta/2,1\wedge\beta)$ and $k_{n} \asymp\Delta_n^{-\gamma}$ such that 
 $$0<\gamma <\frac{\tilde{\kappa}+1-\frac{1}{\beta}}{\tilde{\kappa}+\frac{1}{p}-\frac{1}{\beta}}\wedge\frac{\kappa}{\kappa+\frac{1}{p}-\frac{1}{\beta}},$$ 
or (ii) for $p\in[1,\beta)$ and $k_{n} \asymp\Delta_n^{-\gamma}$ such that $$0<\gamma <\frac{\kappa}{\kappa+1-\frac{p}{\beta}}.$$
Then, {\color{black} for the transformations $f(x) = \log(x)$ or $f(x)=x^r$ with $r>0$}, 
there exists $\varepsilon>0$ such that
\begin{align*}
   \dfrac{(k_{n}/2)^{1-p/\beta}(f(\tilde{\sigma}_{n,j}(p,\beta)) - f(\sigma_{n,t}^p))}{ f'(\tilde{\sigma}_{n,j}(p,\beta)) \tilde{\sigma}_{n,j}(p,\beta)} -\tilde{S}_{n,j}(p,\beta) = o_{\mathbb{P}}(\Delta_n^{\varepsilon}),
\end{align*}
where
\begin{align*}
\label{eq:tilde-stable-U_nj}
   \tilde{S}_{n,j}(p,\beta)\equiv \frac{1}{(k_{n}/2)^{p/\beta}} \sum_{i\in \tilde{\mathcal{I}}_{n,j}} \left(\frac{|\Delta_{2i}^nZ-\Delta_{2i-1}^nZ|^p}{\tilde{c}_{\beta}(p)\Delta_n^{p/\beta}}-1\right).
\end{align*}
\end{theorem}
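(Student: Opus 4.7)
The proof mirrors that of Theorem \ref{th:large-k-1-stable}, adapted to the second-order differencing structure. I would begin with the standard localization argument to assume, without loss of generality, that $b$, $\sigma$ and $1/\sigma$ are uniformly bounded by a constant $K$, with the $L^2$-H\"older conditions on $b$ and $\sigma$ holding globally. For $f(x)=\log x$ and $f(x)=x^r$, I would then use the identity
\[
\frac{f(\tilde\sigma_{n,j}(p,\beta))-f(\sigma_{n,t}^p)}{f'(\tilde\sigma_{n,j}(p,\beta))\,\tilde\sigma_{n,j}(p,\beta)}=(\tilde U-1)+O((\tilde U-1)^2),\quad \tilde U\equiv\tilde\sigma_{n,j}(p,\beta)/\sigma_{n,t}^p,
\]
to reduce the claim to showing $(k_n/2)^{1-p/\beta}(\tilde U-1)-\tilde S_{n,j}(p,\beta)=o_{\mathbb{P}}(\Delta_n^\varepsilon)$. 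A direct tail estimate for $\tilde S_{n,j}(p,\beta)$ (a normalized sum of i.i.d.\ summands lying in the domain of attraction of a $\beta/p$-stable law) yields $\tilde U-1=O_{\mathbb{P}}((k_n/2)^{-(1-p/\beta)})$, so the quadratic remainder contributes $O_{\mathbb{P}}(\Delta_n^{\gamma(1-p/\beta)})$, polynomially small in $\Delta_n$.

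The centerpiece is the pathwise decomposition
\[
\Delta_{2i}^nX-\Delta_{2i-1}^nX=\sigma_t(\Delta_{2i}^nZ-\Delta_{2i-1}^nZ)+e_i^{(b)}+e_i^{(\sigma)},
\]
where $e_i^{(b)}=\Delta_{2i}^nB-\Delta_{2i-1}^nB$ with $B_t=\int_0^t b_s\,ds$, and $e_i^{(\sigma)}=\int_{(2i-2)\Delta_n}^{2i\Delta_n}(\sigma_{s-}-\sigma_t)\phi_i(s)\,dZ_s$ for $\phi_i$ equal to $-1$ on the first sub-interval and $+1$ on the second. Second-order differencing upgrades the $L^2$-H\"older continuity of $b$ into $\mathbb{E}|e_i^{(b)}|^q\le K\Delta_n^{q(1+\tilde\kappa)}$ for $q\in(0,2]$, while a BDG-type moment inequality for $\beta$-stable integrals combined with Assumption \ref{assumption}(ii) gives $\mathbb{E}|e_i^{(\sigma)}|^q\le K(k_n\Delta_n)^{q\kappa}\Delta_n^{q/\beta}$ for $q\in(0,\beta)$. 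Propagating these through $|\,\cdot\,|^p$, dividing by $\tilde c_\beta(p)k_n\sigma_{n,t}^p/2$ and multiplying by $(k_n/2)^{1-p/\beta}$, each error source translates into a polynomial-in-$\Delta_n$ budget that must be absorbed by the prescribed range of $\gamma$.

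In Case (i) with $p\in(\beta/2,1\wedge\beta)$, the subadditivity inequality $||a+b|^p-|a|^p|\le|b|^p$ enables direct termwise bounds on $\mathbb{E}|e_i^{(\cdot)}|^p$; matching exponents from $e_i^{(b)}$ and $e_i^{(\sigma)}$ yields the two halves of the constraint $\gamma<\frac{\tilde\kappa+1-1/\beta}{\tilde\kappa+1/p-1/\beta}\wedge\frac{\kappa}{\kappa+1/p-1/\beta}$. In Case (ii) with $p\in[1,\beta)$, subadditivity fails, and I would instead use $||a+b|^p-|a|^p|\le p(|a|+|b|)^{p-1}|b|$, producing cross terms $\mathbb{E}|\Delta_{2i}^nZ-\Delta_{2i-1}^nZ|^{p-1}\,|e_i^{(\cdot)}|$ that must be treated via H\"older's inequality with conjugate exponents chosen so that $|\Delta_{2i}^nZ-\Delta_{2i-1}^nZ|^{(p-1)q'}$ remains integrable, i.e.\ $(p-1)q'<\beta$. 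The extra averaging makes the drift budget non-binding in this case, so only the volatility error survives, producing the simpler condition $\gamma<\kappa/(\kappa+1-p/\beta)$. A final algebraic step identifies the surviving i.i.d.\ sum with $\tilde S_{n,j}(p,\beta)$ via the self-similarity identity $(\Delta_{2i}^nZ-\Delta_{2i-1}^nZ)/\Delta_n^{1/\beta}\overset{\mathcal{L}}{=}\mathcal S(\beta,0,1,0)$.

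The main obstacle is securing a sufficiently sharp BDG-type moment inequality for $e_i^{(\sigma)}$, as $\beta$ can be close to $1/(1+\tilde\kappa)$, where integrability of $\beta$-stable integrals against an $L^2$-H\"older integrand becomes delicate; the full $(k_n\Delta_n)^{q\kappa}\Delta_n^{q/\beta}$ scaling is essential to match the theorem's rate. A related subtlety in Case (ii) is calibrating the H\"older exponents in the cross-term estimate so that $(p-1)q'<\beta$ holds while one still extracts enough $\Delta_n^{1/\beta}$-scaling from $e_i^{(\sigma)}$; it is this balance that produces the precise form of the rate restriction. Once these moment bounds are in place, replacing $\tilde\sigma_{n,j}(p,\beta)$ by $\sigma_{n,t}^p$ inside the Taylor denominator is a routine Slutsky-type step.
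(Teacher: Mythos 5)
Your proposal is correct and follows essentially the same route as the paper's proof: the same decomposition into drift and volatility coupling errors, the same moment bound for stable integrals (the paper's Lemma \ref{le:Mom-esti}), the inequality $|a^p-b^p|\le K_p|a-b|^p$ in case (i) and the mean-value bound plus H\"older's inequality with exponents $(p,\,p/(p-1))$ in case (ii), and the same Taylor/Slutsky treatment of $f$. The only (harmless) deviations are that you center the volatility at $\sigma_t$ rather than at the block start point $t_{(n,j)}$, and your telescoped drift estimate $\mathbb{E}|e_i^{(b)}|^q\le K\Delta_n^{q(1+\tilde{\kappa})}$ is slightly sharper than the paper's $\Delta_n^{q(1+\tilde{\kappa}-\tilde{\kappa}\gamma)}$, which only relaxes the drift-induced constraint and still delivers the stated range of $\gamma$.
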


\begin{lemma}
    \label{le:lim-distri-tilde-S_nj}
For any $\beta\in(0,2)$ and  $p\in(\beta/2,\beta)$,
$
\tilde{S}_{n,j}(p,\beta)\xrightarrow{\mathcal{L}} \mathcal{S}\left({\beta}/{p},1,\tilde{C}_{\beta}(p)^{-1},0\right),
$
where 
\[
\tilde{C}_\beta(p)
\equiv \tilde{c}_\beta(p)
\left(
\frac{\Gamma(\beta/p)\sin(\pi\beta/(2p))}
{\Gamma(\beta)\sin(\pi\beta/2)}
\right)^{p/\beta}.
\]
\end{lemma}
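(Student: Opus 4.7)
The plan is to reduce the lemma to a standard generalized CLT by rewriting $\tilde{S}_{n,j}(p,\beta)$ as a normalized partial sum of i.i.d.\ centered random variables. By self-similarity of $Z$ and the independence of its increments, the rescaled second differences
\[
\tilde{Z}_i \;\equiv\; \frac{\Delta_{2i}^n Z - \Delta_{2i-1}^n Z}{\Delta_n^{1/\beta}},\qquad i=1,\ldots,k_n/2,
\]
are i.i.d.\ with characteristic function $\exp(-|u|^{\beta})$, i.e., $\tilde{Z}_i \sim \mathcal{S}(\beta,0,1,0)$. Since $\tilde{c}_\beta(p) = \mathbb{E}[|\tilde{Z}_i|^p]$ by definition, this yields
\[
\tilde{S}_{n,j}(p,\beta) \;=\; \frac{1}{(k_n/2)^{p/\beta}}\sum_{i=1}^{k_n/2}\left(\frac{|\tilde{Z}_i|^p}{\tilde{c}_\beta(p)}-1\right),
\]
a normalized sum of i.i.d.\ centered summands whose asymptotic law depends only on their tail behavior.

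Next I would derive that tail. Using the standard Samorodnitsky--Taqqu tail asymptotics for the non-Gaussian stable law, $\mathbb{P}(\tilde{Z}_i > x) \sim \Gamma(\beta)\sin(\pi\beta/2)\,x^{-\beta}/\pi$ as $x \to \infty$, and by symmetry the left tail is identical. A $p$-th power transformation and a trivial shift then yield, for $Y_i \equiv |\tilde{Z}_i|^p/\tilde{c}_\beta(p) - 1$,
\[
\mathbb{P}(Y_i > y) \;\sim\; \frac{2\Gamma(\beta)\sin(\pi\beta/2)}{\pi\,\tilde{c}_\beta(p)^{\beta/p}}\,y^{-\beta/p},\qquad \mathbb{P}(Y_i < -y) = 0\ \text{for } y>1.
\]
Thus $Y_i$ lies in the domain of attraction of a one-sided $\alpha$-stable law with $\alpha = \beta/p \in (1,2)$. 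Because $\alpha>1$, the mean $\mathbb{E}[Y_i]$ is finite and equals zero, so the correct centering is already built into the definition of $Y_i$.

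Finally I would invoke the generalized CLT (e.g., Theorem~3.12(b) in \cite{nolan2020univariate}): with normalization $(k_n/2)^{p/\beta} = (k_n/2)^{1/\alpha}$, the centered partial sum converges in law to $\mathcal{S}(\beta/p,\,1,\,\tilde{\theta},\,0)$, with skewness $\delta = 1$ forced by the vanishing left tail. The scale $\tilde{\theta}$ is identified by matching right-tail coefficients: a $\mathcal{S}(\alpha,1,\theta,0)$ law satisfies $\mathbb{P}(X>\lambda) \sim 2\Gamma(\alpha)\sin(\pi\alpha/2)\,\theta^\alpha\lambda^{-\alpha}/\pi$, so equating with the tail of $Y_i$ yields
\[
\tilde{\theta}^{\beta/p} \;=\; \tilde{c}_\beta(p)^{-\beta/p}\,\frac{\Gamma(\beta)\sin(\pi\beta/2)}{\Gamma(\beta/p)\sin(\pi\beta/(2p))},
\]
which is exactly $\tilde{C}_\beta(p)^{-\beta/p}$ by definition of $\tilde{C}_\beta(p)$, giving $\tilde{\theta} = \tilde{C}_\beta(p)^{-1}$ as claimed.

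The main obstacle will be bookkeeping of constants in the paper's Nolan parametrization for stable distributions. Particular care is needed to verify that the factor of $2$ differentiating $\tilde{C}_\beta(p)$ from the analogous $C_\beta(p)$ in Lemma~\ref{le:lim-distri-S_nj} arises precisely from the scale difference between $\tilde{Z}_i \sim \mathcal{S}(\beta,0,1,0)$ and the single-increment variable $\Delta_i^n Z/\Delta_n^{1/\beta} \sim \mathcal{S}(\beta,0,2^{-1/\beta},0)$, consistently with the identity $\tilde{c}_\beta(p) = 2^{p/\beta}\,c_\beta(p)$ that follows directly from \eqref{eq:constant-c_p} and \eqref{eq:constant-tilde-c_p}. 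Once these constants are correctly tracked, the statement follows immediately from the one-sided generalized CLT.
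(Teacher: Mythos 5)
Your proposal is correct and follows essentially the same route as the paper: reduce $\tilde{S}_{n,j}(p,\beta)$ by self-similarity to a normalized i.i.d.\ sum with $\tilde{Z}_i\sim\mathcal{S}(\beta,0,1,0)$, compute the right tail of $|\tilde{Z}_i|^p/\tilde{c}_\beta(p)$ via Nolan's Theorem~1.2, and apply the one-sided generalized CLT with the scale pinned down by tail matching (the paper plugs into the explicit $a_n,b_n$ normalizations of Theorem~3.12(a), which is equivalent to your coefficient matching; note the relevant part is 3.12(a), not 3.12(b)). Your accounting of the factor of $2$ via the scale difference $\mathcal{S}(\beta,0,1,0)$ versus $\mathcal{S}(\beta,0,2^{-1/\beta},0)$ and the identity $\tilde{c}_\beta(p)=2^{p/\beta}c_\beta(p)$ is exactly right.
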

Lemma \ref{le:lim-distri-tilde-S_nj} shows that the limit distribution of $\tilde{S}_{n,j}(p,\beta)$ is a totally right-skewed $\beta/p$-stable distribution $\mathcal{S}({\beta}/{p},1,\tilde{C}_{\beta}(p)^{-1},0)$ with activity index $\beta/p\in(1,2)$ for $p\in (\beta/2,\beta)$.
Combing this with Theorem \ref{th:large-k-2-stable}, we obtain
\begin{align*}
\dfrac{(k_{n}/2)^{1-p/\beta}(f(\tilde{\sigma}_{n,j}(p,\beta)) - f(\sigma_{n,t}^p))}{ f'(\tilde{\sigma}_{n,j}(p,\beta)) \tilde{\sigma}_{n,j}(p,\beta)} \xrightarrow{\mathcal{L}} \mathcal{S}\left({\beta}/{p},1,\tilde{C}_{\beta}(p)^{-1},0\right).
\end{align*}
This stable limit then yields a CI for $f(\sigma_{n,t}^p)$, as stated in the following corollary.

\begin{corollary}
\label{cor:large-k-tilde-stable}
Suppose that the conditions in {Theorem \ref{th:large-k-2-stable}} hold. For any $\alpha\in(0,1)$, let $\tilde{q}^L_{\alpha}(p,\beta)$ and $\tilde{q}^U_{\alpha}(p,\beta)$ be constants such that $\mathbb{P}[\tilde{q}^L_{\alpha}(p,\beta)\leq\tilde{\zeta}\leq \tilde{q}^U_{\alpha}(p,\beta)]=1-\alpha$, where $\tilde{\zeta}$ is a generic 
$\mathcal{S}({\beta}/{p},1,\tilde{C}_{\beta}(p)^{-1},0)$-distributed random variable. Then we obtain
\[
\lim_{n\rightarrow\infty}\mathbb{P}\!\left[\tilde{B}_{n,j}^{(s)-}\leq 
    f(\tilde{\sigma}_{n,t}^p) \leq \tilde{B}_{n,j}^{(s)+}(p,\beta)\right] = 1-\alpha,
\]
where 
\begin{align*}
    \tilde{B}_{n,j}^{(s)-}(p,\beta) &=\frac{f(\tilde{\sigma}_{n,j}(p,\beta))-{ f'(\tilde{\sigma}_{n,j}(p,\beta)) \tilde{\sigma}_{n,j}(p,\beta)\tilde{q}^U_{\alpha}(p,\beta)}}{({k_{n}/2)^{1-p/\beta}}},
    \\
    \tilde{B}_{n,j}^{(s)+}(p,\beta) &=\dfrac{f(\tilde{\sigma}_{n,j}(p,\beta))-{ f'(\tilde{\sigma}_{n,j}(p,\beta)) \tilde{\sigma}_{n,j}(p,\beta)\tilde{q}^L_{\alpha}(p,\beta)}}{{(k_{n}/2)^{1-p/\beta}}}.
\end{align*}
\end{corollary}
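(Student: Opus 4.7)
The plan is short: this corollary follows directly by combining Theorem~\ref{th:large-k-2-stable} with Lemma~\ref{le:lim-distri-tilde-S_nj} and then inverting a pivotal inequality. First, I would abbreviate the normalized estimation error as
\[
T_{n,j} \equiv \dfrac{(k_{n}/2)^{1-p/\beta}\bigl(f(\tilde{\sigma}_{n,j}(p,\beta)) - f(\sigma_{n,t}^{p})\bigr)}{ f'(\tilde{\sigma}_{n,j}(p,\beta))\, \tilde{\sigma}_{n,j}(p,\beta)}.
\]
By Theorem~\ref{th:large-k-2-stable}, one has $T_{n,j}-\tilde{S}_{n,j}(p,\beta)=o_{\mathbb{P}}(\Delta_n^{\varepsilon})=o_{\mathbb{P}}(1)$, and by Lemma~\ref{le:lim-distri-tilde-S_nj}, $\tilde{S}_{n,j}(p,\beta)\xrightarrow{\mathcal{L}} \tilde{\zeta}\sim\mathcal{S}(\beta/p,1,\tilde{C}_{\beta}(p)^{-1},0)$. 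Slutsky's theorem then yields $T_{n,j}\xrightarrow{\mathcal{L}} \tilde{\zeta}$.

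Second, I would pass from convergence in distribution to convergence of probabilities of the pivotal event. Since every non-degenerate stable law is absolutely continuous with a smooth density, the two endpoints $\tilde{q}^L_{\alpha}(p,\beta)$ and $\tilde{q}^U_{\alpha}(p,\beta)$ are continuity points of the distribution function of $\tilde{\zeta}$. The portmanteau theorem therefore gives
\[
\mathbb{P}\bigl[\tilde{q}^L_{\alpha}(p,\beta)\le T_{n,j}\le \tilde{q}^U_{\alpha}(p,\beta)\bigr]\;\longrightarrow\;\mathbb{P}\bigl[\tilde{q}^L_{\alpha}(p,\beta)\le \tilde{\zeta}\le \tilde{q}^U_{\alpha}(p,\beta)\bigr]=1-\alpha.
\]

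Finally, I would invert the pivotal inequality algebraically. For $f=\log$ and for $f(x)=x^r$ with $r>0$, the quantity $f'(\tilde{\sigma}_{n,j}(p,\beta))\,\tilde{\sigma}_{n,j}(p,\beta)$ is strictly positive (almost surely), so multiplying through preserves the direction of the inequalities. Solving the double inequality $\tilde{q}^L_{\alpha}\le T_{n,j}\le \tilde{q}^U_{\alpha}$ for $f(\sigma_{n,t}^{p})$ yields exactly the endpoints $\tilde{B}_{n,j}^{(s)\pm}(p,\beta)$; note that the upper quantile $\tilde{q}^U_{\alpha}$ appears in the \emph{lower} endpoint (and vice versa) because moving $-f(\sigma_{n,t}^{p})$ to the other side reverses the role of the two bounds. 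The argument is routine, and I do not anticipate any substantive obstacle: the two heavy-lifting ingredients (the coupling of Theorem~\ref{th:large-k-2-stable} and the stable limit of Lemma~\ref{le:lim-distri-tilde-S_nj}) are already in place, and the only cautionary checks are the continuity of the stable cumulative distribution function at the chosen quantiles and the bookkeeping of signs during the inversion.
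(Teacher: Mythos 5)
Your proposal is correct and follows essentially the same route as the paper, which obtains the corollary by combining the coupling of Theorem~\ref{th:large-k-2-stable} with the stable limit of Lemma~\ref{le:lim-distri-tilde-S_nj} via Slutsky's theorem and then inverting the pivotal inequality (your observation that $\tilde{q}^U_{\alpha}$ lands in the lower endpoint is the right bookkeeping, and it also shows that only the quantile term, not $f(\tilde{\sigma}_{n,j}(p,\beta))$ itself, should be divided by $(k_n/2)^{1-p/\beta}$, consistent with the analogous Corollary~\ref{cor:large-k-stable}).
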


%%%%%%% Section 4 %%%%%%%%%%

\section{Simulations}
\label{sect:simulations}
In this section, we conduct a Monte Carlo study to evaluate the finite-sample performance of the proposed inference on the scaled spot volatility $\sigma_{n,t}$ under both the fixed-$k$ and large-$k$ asymptotic frameworks. 
 The sample span is fixed at $T=1$ trading day, and 
the log-price process is simulated according to
\(
dX_t = \sigma_t \, dZ_t,
\)
where $Z$ is a stable process with activity index $\beta = 1.6$. To generate the volatility process $\sigma_t$, we follow the specification of \cite{bollerslev2011estimation}, modeling the spot variance as
\(
\sigma_t^2 = V_{1,t} + V_{2,t},
\)
with two independent volatility factors satisfying the following model:
\begin{align*}
    dV_{1,t} &= 0.0128(0.4068 - V_{1,t})\,dt + 0.0954\sqrt{V_{1,t}}\,dB_{1,t}, \\
    dV_{2,t} &= 0.6930(0.4068 - V_{2,t})\,dt + 0.7023\sqrt{V_{2,t}}\,dB_{2,t},
\end{align*}
where $B_{1}$ and $B_{2}$ are independent Brownian motions that are also independent of the stable process $Z$.  The 
$V_1$ volatility factor is highly persistent with a half-life of $2.5$ months, whereas the $V_2$ factor mean-reverts much faster, with a half-life of roughly one day. All continuous processes are simulated using the Euler scheme on $1$-second mesh, and the observed returns employed in the inference procedures are sampled at $\Delta_n=1$ minute intervals, yielding $390$ intraday returns over the trading day. All the numerical results given below are based on 100,000 Monte Carlo replications.

We compare the finite-sample distribution of the ratio estimation error at time $t = k\Delta_n$, namely $\hat{\sigma}_{n,t}(p)/\sigma_{n,t}^p$, with the corresponding limiting distributions under the fixed-$k$ and large-$k$ asymptotic theories. Recall that under the large-$k$ framework, the limit is Gaussian when $p < \beta/2$ and totally right-skewed stable when $p > \beta/2$. To illustrate both regimes, we consider $p = 0.6$ and $p = 1$, respectively. The fixed-$k$ limit distribution is expected to provide a more accurate finite-sample approximation, although the discrepancy should diminish as $k$ increases. To assess this behavior numerically, we examine a range of block sizes $k \in \{5, 15, 30, 60\}$.

Figure~\ref{fig:Simu_fix-larg-histo} reports the results, with the left and right panels corresponding to $p=0.6$ and $p=1$, respectively. The fixed-$k$ limit distributions track the finite-sample distributions remarkably well in both cases. By contrast, the large-$k$ limit distributions display a noticeable divergence from the empirical distributions---whether the theoretical limit is Gaussian or right-skewed stable. As predicted by the asymptotic theory, this discrepancy diminishes as $k$ increases, but it remains non-negligible even at $k = 60$. When $k$ is small, the large-$k$ limits yield particularly poor approximations.

In summary, we recommend using the fixed-$k$ asymptotic framework and its associated statistical procedure for spot volatility inference under the pure-jump setting. This conclusion is consistent with the findings of \cite{Bollerslev:2021} in the classical Brownian diffusion setting. By contrast, the large-$k$ approximation appears to require a substantially larger local sample size for the asymptotics to ``kick in'' adequately. This feature warrants caution in future work involving semiparametric or nonparametric inference on spot volatility, where reliance on large-$k$ asymptotics may lead to poor finite-sample performance.

\begin{figure}[htbp]
    \centering
    % First row
    \begin{subfigure}{0.5\textwidth}
        \centering
        \includegraphics[width=\linewidth,]{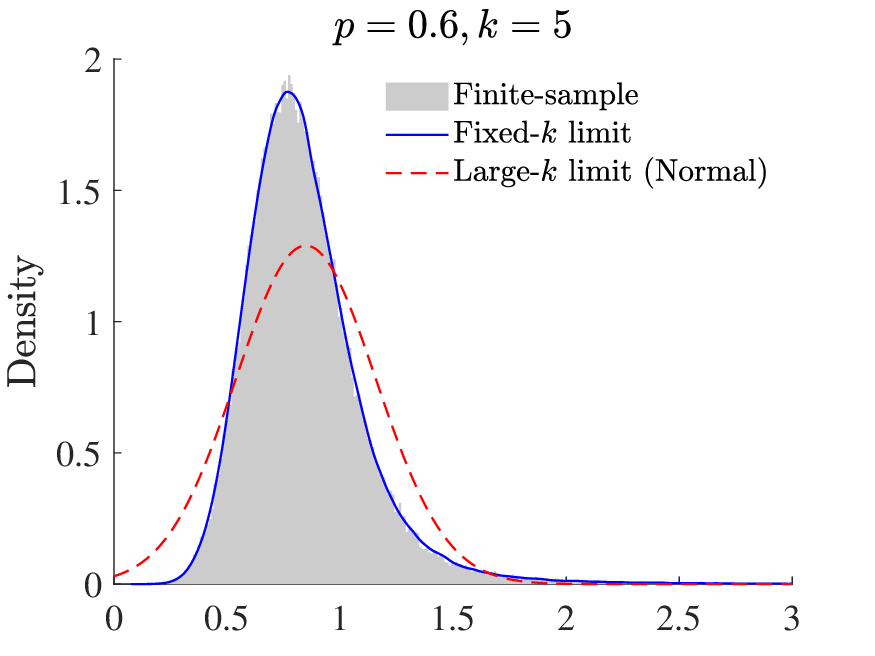}
    \end{subfigure}%
    \hfill
    \begin{subfigure}{0.5\textwidth}
        \centering
        \includegraphics[width=\linewidth]{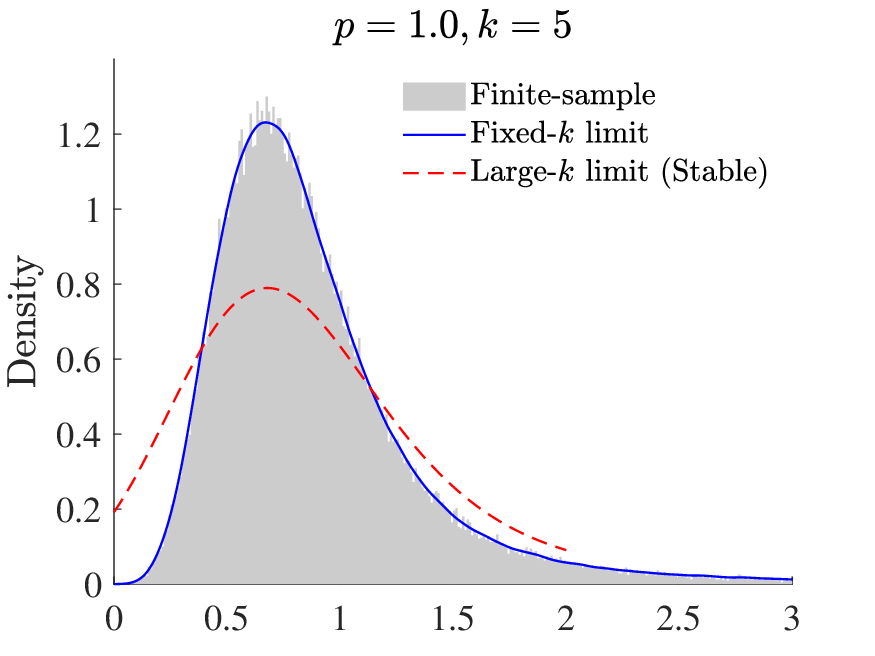}
    \end{subfigure}

    % Second row
    \begin{subfigure}{0.5\textwidth}
        \centering
        \includegraphics[width=\linewidth]{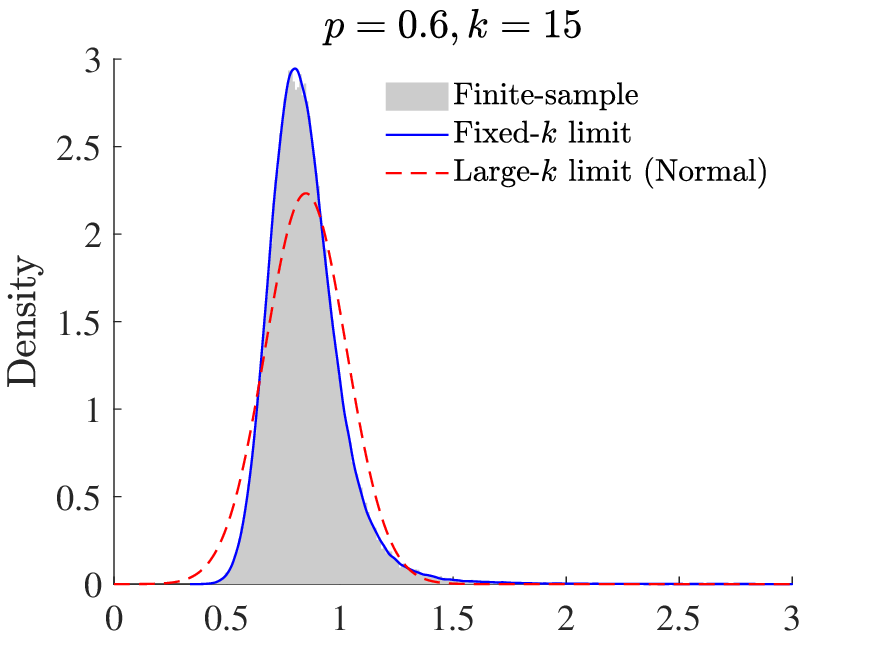}
    \end{subfigure}%
    \hfill
    \begin{subfigure}{0.5\textwidth}
        \centering
        \includegraphics[width=\linewidth]{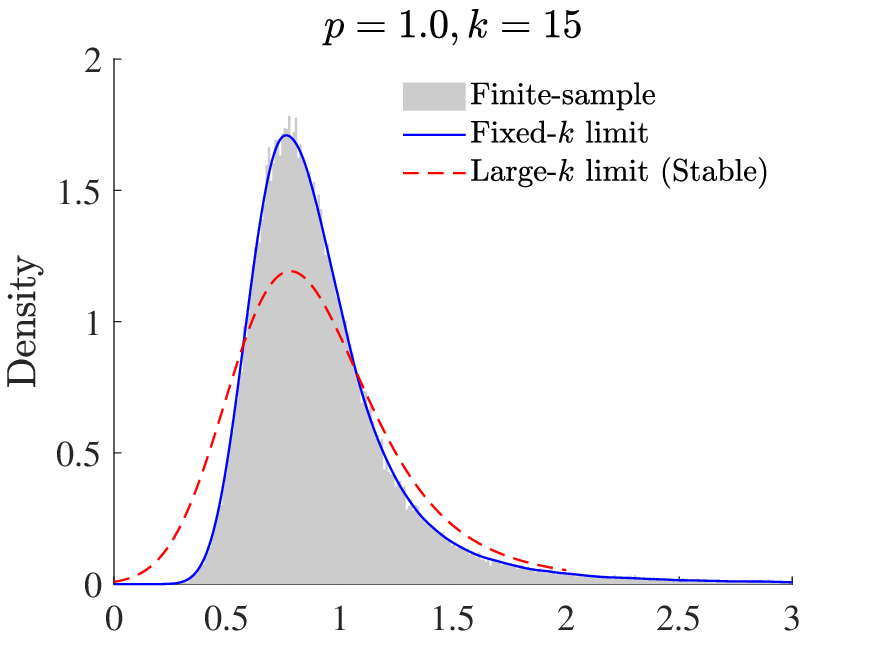}
    \end{subfigure}

    % Third row
    \begin{subfigure}{0.5\textwidth}
        \centering
        \includegraphics[width=\linewidth]{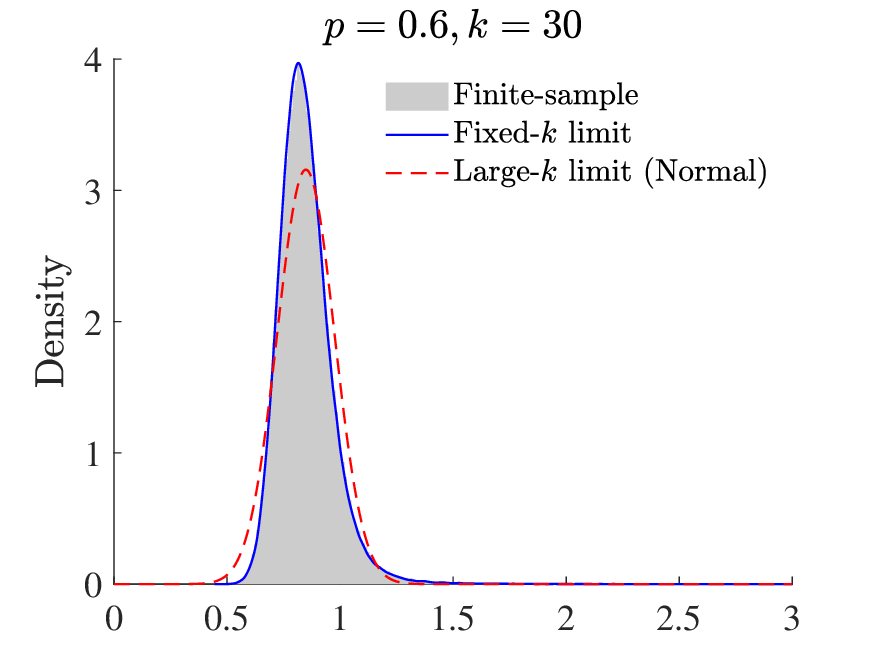}
    \end{subfigure}%
    \hfill
    \begin{subfigure}{0.5\textwidth}
        \centering
        \includegraphics[width=\linewidth]{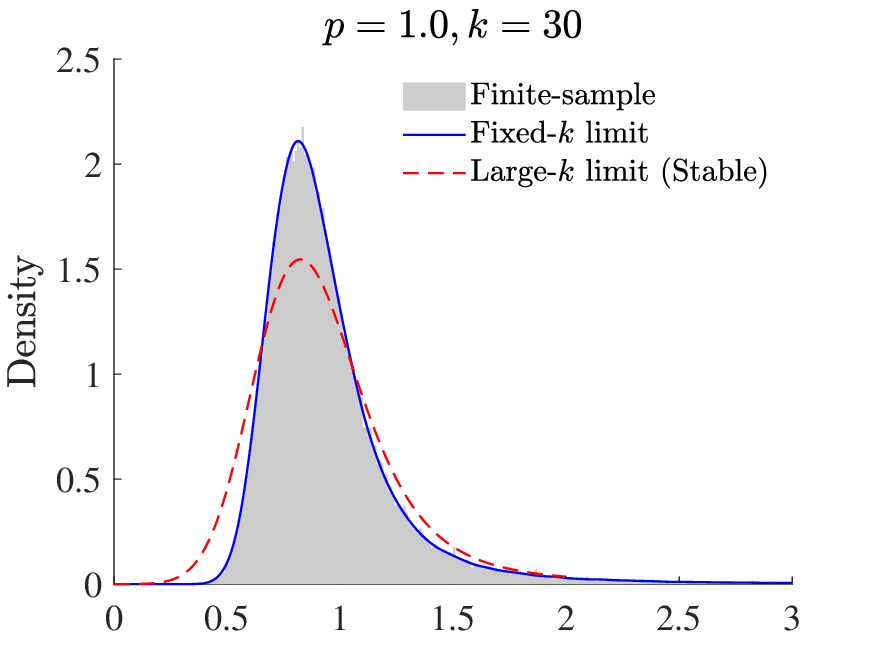}
    \end{subfigure}

    % Fourth row
    \begin{subfigure}{0.5\textwidth}
        \centering
        \includegraphics[width=\linewidth]{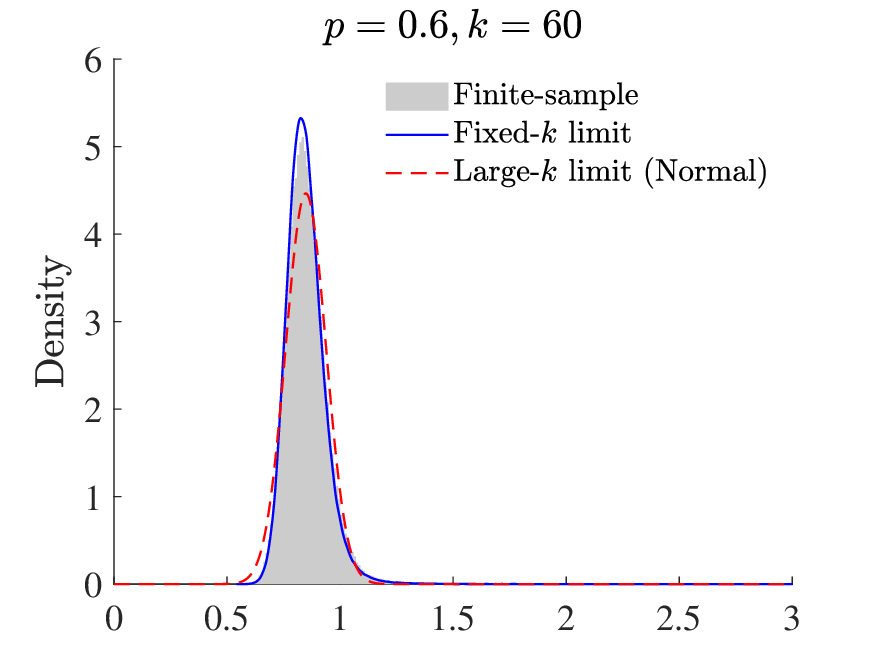}
    \end{subfigure}%
    \hfill
    \begin{subfigure}{0.5\textwidth}
        \centering
    \includegraphics[width=\linewidth]{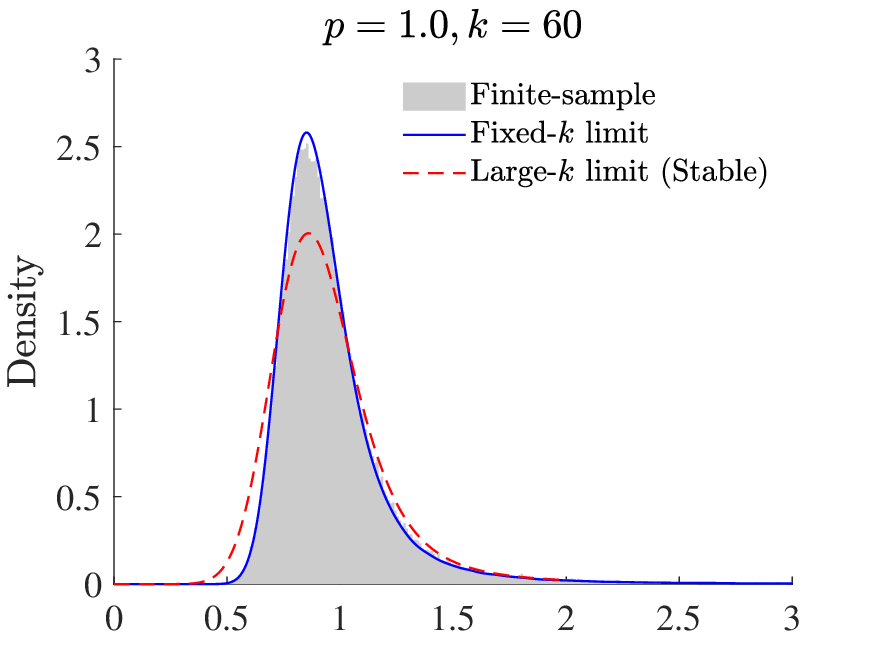}
    \end{subfigure}
\caption{Monte Carlo comparison between finite-sample distribution of $\hat{\sigma}_{n,j}(p)/\sigma_{n,t}^p$ and fixed-$k$ and large-$k$ limits.}
\label{fig:Simu_fix-larg-histo}
\end{figure}

%%%%%%%Section 5 %%%%%%%%%%
\section{Proofs}
\label{sect:proofs}
Throughout the proofs, for the $j$-th block, we use $t_{(n,j)}$ to denote the start point of the time interval $\mathcal{T}_{n,j}$,
that is, $t_{(n,j)}\equiv (j-1)k\Delta_n$ for fixed-$k$ setting and $t_{(n,j)}\equiv (j-1)k_{n}\Delta_n$ for large-$k$ setting. By a standard localization method, we can strengthen 
Assumptions \ref{assumption} and \ref{assumption-2} via setting $T_1=\infty$ without loss of generality 
(see section 4.4.1 in  \cite{jacod2012discretization}
for details on localization).

Before proving the main results in the paper, we first give a technique lemma which will be used in the sequel. For any $t\geq0$, define 
\begin{align*}
   Y_t\equiv \int_0^t \sigma_{s-}\,dZ_s.
\end{align*}
According to Assumption \ref{assumption}, the above stochastic integral is well-defined. Moreover, we have the 
following tail estimates (see, e.g., Theorem 3.5 in \cite{gine1983central} or
Theorem 2.1 in \cite{rosinski:1986}):
\begin{align}
\label{eq:tail-esti}
\mathbb{P}\left[|Y_t|^p>x\right] \leq 
\frac{K}{x^{\beta/p}}\mathbb{E}\left[\int_0^t|\sigma_s|^{\beta}\,ds\right],\,\,\forall\,\,p\in(0,\beta)\,\,\text{and}\,\, x>0.
\end{align}

The following lemma shows a moment estimate for process $Y$.
\begin{lemma}
\label{le:Mom-esti}
    Suppose that {Assumption \ref{assumption}} holds.
    Then, for any $t\geq0$ and $0<p<\beta$,
    \begin{align*}
\mathbb{E}\left[\left|Y_t\right|^p\right]  \leq 
    K_{p,\beta} \left( \mathbb{E}\left[\int_0^t|\sigma_s|^\beta\,ds\right]\right)^{p/\beta}.
    \end{align*}
\end{lemma}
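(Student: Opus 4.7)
The plan is to combine the layer-cake representation of $\mathbb{E}[|Y_t|^p]$ with the pre-stated tail estimate \eqref{eq:tail-esti} and optimize the split. Writing $A \equiv \mathbb{E}[\int_0^t |\sigma_s|^\beta\,ds]$, I would begin from
\[
\mathbb{E}[|Y_t|^p] \;=\; \int_0^\infty \mathbb{P}[|Y_t|^p > x]\,dx,
\]
and split this integral at an arbitrary threshold $M>0$ that I will tune at the end.

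On the head $[0,M]$, I would use the trivial bound $\mathbb{P}[\,\cdot\,]\le 1$, contributing at most $M$. On the tail $[M,\infty)$, I would invoke \eqref{eq:tail-esti}, which gives $\mathbb{P}[|Y_t|^p > x] \le KA/x^{\beta/p}$. Crucially, the constraint $p<\beta$ delivers $\beta/p>1$, so $\int_M^\infty x^{-\beta/p}\,dx = M^{1-\beta/p}/(\beta/p-1)$ is finite, and the tail contribution is bounded by $KA/[(\beta/p-1)M^{\beta/p-1}]$. Combining,
\[
\mathbb{E}[|Y_t|^p] \;\le\; M \;+\; \frac{KA}{(\beta/p-1)\,M^{\beta/p-1}}.
\]

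I would then balance the two terms by choosing $M \asymp A^{p/\beta}$, which makes both summands scale exactly like $A^{p/\beta}$ with a constant depending only on $p$ and $\beta$. This yields $\mathbb{E}[|Y_t|^p] \le K_{p,\beta}\,A^{p/\beta}$, as required.

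The argument is essentially a standard moment-from-tail computation, so I do not expect a serious obstacle. The only points worth care are: (i) verifying that the optimizing $M$ produces the exponent $p/\beta$ on $A$ rather than some other power, and (ii) confirming that the resulting constant $K_{p,\beta}$ is finite uniformly in $t$ and in the path of $\sigma$ (which it is, since $t$ enters only through $A$, and \eqref{eq:tail-esti} already encapsulates all path dependence of $\sigma$). Since \eqref{eq:tail-esti} is quoted from standard references on stable stochastic integrals, no separate derivation of the tail bound is needed.
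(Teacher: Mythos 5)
Your proposal is correct and follows essentially the same route as the paper: the paper likewise writes $\mathbb{E}[|Y_t|^p]$ via the layer-cake formula, splits the integral at a threshold $C$, bounds the head by $C$ and the tail by $KC^{1-\beta/p}\mathbb{E}[\int_0^t|\sigma_s|^\beta\,ds]$ using \eqref{eq:tail-esti}, and then minimizes over $C$, which yields the optimizer $C\asymp A^{p/\beta}$ exactly as in your balancing step. No gaps.
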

\begin{proof}
For any $C>0$, we have
\begin{align*}
\mathbb{E}\left[\left|Y_t\right|^p\right]  
  &= \int_0^{\infty} \mathbb{P}\left[\left|\int_{0}^{t}\sigma_{s-}\,dZ_s\right|^p>x\right] \,dx
  \\
  &= \int_0^{C} \mathbb{P}\left[\left|\int_{0}^{t}\sigma_{s-}\,dZ_s\right|^p>x\right] \,dx
  + \int_C^{\infty} \mathbb{P}\left[\left|\int_{0}^{t}\sigma_{s-}\,dZ_s\right|^p>x\right] \,dx
  \\
  &\leq C + K_{p,\beta}C^{1-\beta/p}\mathbb{E}\left[\int_0^t|\sigma_s|^{\beta}\,ds\right],
\end{align*}
 where the last inequality follows by (\ref{eq:tail-esti}).
Define
\begin{align*}
    g(C) \equiv C + K_{p,\beta}C^{1-\beta/p}\mathbb{E}\left[\int_0^t|\sigma_s|^{\beta}\,ds\right], \,\,C>0.
\end{align*}
Since $p\in(0,\beta)$, by letting $g'(C) = 0$, 
the minimal 
value of $g$ is given by
\begin{align*}
      K_{p,\beta}^{p/\beta}\left(\frac{\beta}{\beta-p}\right) \left(\frac{\beta-p}{p}\right)^{p/\beta}\left( \mathbb{E}\left[\int_0^t|\sigma_s|^\beta\,ds\right]\right)^{p/\beta},
\end{align*}
which implies that
\begin{align*}
\mathbb{E}\left[\left|Y_t\right|^p\right]  \leq 
    K_{p,\beta} \left( \mathbb{E}\left[\int_0^t|\sigma_s|^\beta\,ds\right]\right)^{p/\beta}.
\end{align*}
The proof is finished.
\end{proof}

%%%%%%%%%%%%%%%%%%%%%%%%%%%%%%%%%%%%%%%%%%
%%%%%%%%% Proofs in Section 2 %%%%%%%%%%%%
%%%%%%%%%%%%%%%%%%%%%%%%%%%%%%%%%%%%%%%%%%

\subsection{Proofs in {\rm Section \ref{sect:fixed-k}}}

%%%%%%%%%%%%%%%%%%%%%%%%%%%%%%%%%%%%%%%%%%
%%%%%%%%% Proof of Theorem 1 %%%%%%%%%%%%%
%%%%%%%%%%%%%%%%%%%%%%%%%%%%%%%%%%%%%%%%%%

\begin{proof}[Proof of Theorem \ref{th:fixed-k-1}]
For the $j$-th block, define
    \begin{align*}
        \Theta_{n,j}(p,\beta)\equiv\dfrac{1}{\Delta_n^{p/\beta}}\sum_{i\in\mathcal{I}_{n,j}}|\Delta_{i}^nZ|^p,\,\,p>0.
    \end{align*}
By the self-similarity property $\Delta_i^nZ\overset{\mathcal{L}}=\Delta_n^{1/\beta}Z_1$,
we have
\begin{align}
\label{eq:same-law-Snj}
\Theta_{n,j}(p,\beta)\overset{\mathcal{L}}=\sum_{i\in\mathcal{I}_{n,j}}|Z_i|^p,
\end{align}
where $(Z_i)_{i\in\mathcal{I}_{n,j}}$ are i.i.d. symmetric $\beta$-stable 
variables satisfying 
$\mathbb{E}[e^{iuZ_i}]\equiv e^{-|u|^{\beta}/2}$. Moreover, 
we have the following 
decomposition:
\begin{align}
\label{eq:decomposition}
&k \hat{\sigma}_{n,j}(p) - \sigma_{n,t}^{p} \Theta_{n,j}(p,\beta) 
\nonumber\\
&= k\hat{\sigma}_{n,j}(p) - \Delta_n^{p/\beta}\sigma_{t_{(n,j)}}^p \Theta_{n,j}(p,\beta)+(\Delta_n^{p/\beta}\sigma_{t_{(n,j)}}^p - \sigma_{n,t}^{p}) \Theta_{n,j}(p,\beta).
\end{align}

Note that 
\begin{align*}
   \mathbb{E}[|(\Delta_n^{p/\beta}\sigma_{t_{(n,j)}}^p - \sigma_{n,t}^{p}) \Theta_{n,j}(p,\beta)|] &\leq  \Delta_n^{p/\beta}  \mathbb{E}[|\sigma_{t_{(n,j)}}^p-\sigma_t^p|] 
   \nonumber\\
   &\leq K_p \Delta_n^{p/\beta} \mathbb{E}[|\sigma_{t_{(n,j)}}-\sigma_t|^{1\wedge p}]
   \nonumber\\
   &\le K_p \Delta_n^{p/\beta+\kappa(1\wedge p)},
\end{align*}
where the first inequality follows by $\sigma_{n,t}=\Delta_n^{1/\beta}\sigma_t$ and $\Theta_{n,j}(p,\beta)=O_{\mathbb{P}}(1)$, 
the second inequality follows from the inequality 
$|a^p - b^p| \le K_p |a - b|^{1\wedge p}$, for all $p>0$ and $a,b>0$,
and the last inequality follows from
condition (ii) in Assumption \ref{assumption}. Then we have
\begin{align}
\label{eq:second-term}
  |(\Delta_n^{p/\beta}\sigma_{t_{(n,j)}}^p - \sigma_{n,t}^{p}) \Theta_{n,j}(p,\beta)|=  O_{\mathbb{P}}(\Delta_n^{p/\beta+\kappa(1\wedge p)}).
\end{align}

By \eqref{eq:Price-process}, we have
\begin{align*}
  \Delta_i^nX-\sigma_{t_{(n,j)}}\Delta_i^nZ =   
  \int_{(i-1)\Delta_n}^{i\Delta_n}b_s\,ds+\int_{(i-1)\Delta_n}^{i\Delta_n}(\sigma_{s-}-\sigma_{t_{(n,j)}})\,dZ_s.
\end{align*}
Then 
\begin{align*}
% \label{eq:estimation-p-th-difference}
    \mathbb{E}[|\Delta_i^nX-\sigma_{t_{(n,j)}}\Delta_i^nZ|]
    % \nonumber\\
    &\leq K\mathbb{E}\left[\left|\int_{(i-1)\Delta_n}^{i\Delta_n}b_s\,ds\right|\right]
    +K\mathbb{E}\left[\left|\int_{(i-1)\Delta_n}^{i\Delta_n}(\sigma_{s-}-\sigma_{t_{(n,j)}})\,dZ_s\right|\right]
    \nonumber\\
    &\leq K_p\Delta_n+ K_{\beta} \left(\int_{(i-1)\Delta_n}^{i\Delta_n}\mathbb{E}\left[|\sigma_{s}-\sigma_{t_{(n,j)}}|^{\beta}\right]\,ds\right)^{1/\beta}
    \nonumber\\
    &\leq K \Delta_n+ K_{\beta} \Delta_n^{\kappa+1/\beta},
\end{align*}
where the first inequality follows from the triangle inequality, 
the second inequality follows from condition~(i) in Assumption~\ref{assumption} 
and Lemma~\ref{le:Mom-esti}, 
and the third inequality follows from the Cauchy-Schwarz inequality 
together with condition~(ii) in Assumption~\ref{assumption}. Hence, we obtain
\begin{align}
\label{eq:key-estimate-fixed-k}
    |\Delta_i^nX-\sigma_{t_{(n,j)}}\Delta_i^nZ| = O_{\mathbb{P}}(\Delta_n^{1\wedge(\kappa+1/\beta)}).
\end{align}

For $p\in(0,1]$,  
we have
$$
     ||\Delta_i^nX|^p-|\sigma_{t_{(n,j)}}\Delta_i^nZ|^p|
     \leq K_p|\Delta_i^nX-\sigma_{t_{(n,j)}}\Delta_i^nZ|^p.
$$   
And so,
$$
    ||\Delta_i^nX|^p-|\sigma_{t_{(n,j)}}\Delta_i^nZ|^p|  = O_{\mathbb{P}}(\Delta_n^{p(1\wedge(\kappa+1/\beta))}).$$
    Then it holds by
the triangle inequality that
\begin{align}
\label{eq:first-term-smaller1}
   | k\hat{\sigma}_{n,j}(p) - \Delta_n^{p/\beta}\sigma_{t_{(n,j)}}^p \Theta_{n,j}(p,\beta)| = O_{\mathbb{P}}(\Delta_n^{p(1\wedge(\kappa+1/\beta))}).
\end{align}
Combing \eqref{eq:decomposition}, \eqref{eq:second-term} and \eqref{eq:first-term-smaller1}, we have
\begin{align*}
   k \hat{\sigma}_{n,j}(p) - \sigma_{n,t}^{p} \Theta_{n,j}(p,\beta)  = O_{\mathbb{P}}(\Delta_n^{p(1\wedge(\kappa+1/\beta))}).
\end{align*}
Since $\sigma_{n,t}^{p}=O_\mathbb{P}(\Delta_n^{p/\beta})$, we obtain
\begin{align}
\label{eq:case-p-smaller-1}
     \dfrac{\hat{\sigma}_{n,j}(p)}{\sigma_{n,t}^{p}} - \dfrac{1}{k} \Theta_{n,j}(p,\beta)  = O_{\mathbb{P}}(\Delta_n^{p(\kappa\wedge(1-1/\beta))}).
\end{align}

For $p>1$, by  
$|a^p - b^p| \le p\,(a^{p-1} + b^{p-1})|a - b|,\,\forall a,b \ge 0$, we obtain
\begin{align*}
  ||\Delta_i^nX|^p-|\sigma_{t_{(n,j)}}\Delta_i^nZ|^p|
     \leq K_p((|\Delta_i^nX|^{p-1}+|\sigma_{t_{(n,j)}}\Delta_i^nZ|^{p-1})|\Delta_i^nX-\sigma_{t_{(n,j)}}\Delta_i^nZ|).  
\end{align*}
Since $|\Delta_i^nX|=O_{\mathbb{P}}(\Delta_n^{1/\beta})$ and $|\Delta_i^nZ|=O_{\mathbb{P}}(\Delta_n^{1/\beta})$, it holds by 
\eqref{eq:key-estimate-fixed-k} that
\begin{align*}
    ||\Delta_i^nX|^p-|\sigma_{t_{(n,j)}}\Delta_i^nZ|^p| = O_{\mathbb{P}}(\Delta_n^{1\wedge(\kappa+1/\beta)+(p-1)/\beta}).
\end{align*}
And so, the triangle inequality implies that
\begin{align}
\label{eq:first-term-bigger1}
   | k\hat{\sigma}_{n,j}(p) - \Delta_n^{p/\beta}\sigma_{t_{(n,j)}}^p \Theta_{n,j}(p,\beta)| = O_{\mathbb{P}}(\Delta_n^{1\wedge(\kappa+1/\beta)+(p-1)/\beta}).
\end{align}
Combing \eqref{eq:decomposition}, \eqref{eq:second-term} and \eqref{eq:first-term-bigger1}, we have
\begin{align*}
   k \hat{\sigma}_{n,j}(p) - \sigma_{n,t}^{p} \Theta_{n,j}(p,\beta)  =  O_{\mathbb{P}}(\Delta_n^{1\wedge(\kappa+1/\beta)+(p-1)/\beta}).
\end{align*}
Since $\sigma_{n,t}^{p}=O_\mathbb{P}(\Delta_n^{p/\beta})$, we have
\begin{align}
\label{eq:case-p-bigger-1}
    \dfrac{\hat{\sigma}_{n,j}(p)}{\sigma_{n,t}^{p}} - \dfrac{1}{k} \Theta_{n,j}(p,\beta)  = O_{\mathbb{P}}(\Delta_n^{\kappa\wedge(1-1/\beta)}).
\end{align}
The proof is finished by combing \eqref{eq:same-law-Snj}, \eqref{eq:case-p-smaller-1} and \eqref{eq:case-p-bigger-1}.
\end{proof}

%%%%%%%%%%%%%%%%%%%%%%%%%%%%%%%%%%%%%%%%%%
%%%%%%%%% Proof of Theorem 2 %%%%%%%%%%%%%
%%%%%%%%%%%%%%%%%%%%%%%%%%%%%%%%%%%%%%%%%%

\begin{proof}[Proof of Theorem \ref{th:fixed-k-2}]
By (\ref{eq:Price-process}), we have
 \begin{align*}
     &\Delta_{2i}^nX-\Delta_{2i-1}^nX - \sigma_{t_{(n,j)}}(\Delta_{2i}^nZ-\Delta_{2i-1}^nZ)
     \\
     &=\int_{(2i-1)\Delta_n}^{2i\Delta_n} (b_s-b_{t_{(n,j)}})\,ds
     -\int_{(2i-2)\Delta_n}^{(2i-1)\Delta_n} (b_{s}-b_{t_{(n,j)}})\,ds
     \\
     &\quad+ 
     \int_{(2i-1)\Delta_n}^{2i\Delta_n} (\sigma_{s-}-\sigma_{t_{(n,j)}})\,dZ_s-\int_{(2i-2)\Delta_n}^{(2i-1)\Delta_n} (\sigma_{s-}-\sigma_{t_{(n,j)}})\,dZ_s.
 \end{align*}
By the triangle inequality, we have that for $p>0$,
 \begin{align*}
  &\mathbb{E}[|\Delta_{2i}^nX-\Delta_{2i-1}^nX - \sigma_{t_{(n,j)}}(\Delta_{2i}^nZ-\Delta_{2i-1}^nZ)|^p]
  \\
  &\leq K_p \mathbb{E}\left[\left|\int_{(2i-1)\Delta_n}^{2i\Delta_n} (b_s-b_{t_{(n,j)}})\,ds\right|^p\right] + K_p 
  \mathbb{E}\left[\left|\int_{(2i-2)\Delta_n}^{(2i-1)\Delta_n} (b_s-b_{t_{(n,j)}})\,ds\right|^p\right]
  \\
  &\quad+ K_p\mathbb{E}\left[\left| \int_{(2i-1)\Delta_n}^{2i\Delta_n} (\sigma_{s-}-\sigma_{t_{(n,j)}})\,dZ_s\right|^p\right] 
  +
  K_p\mathbb{E}\left[\left| \int_{(2i-2)\Delta_n}^{(2i-1)\Delta_n} (\sigma_{s-}-\sigma_{t_{(n,j)}})\,dZ_s\right|^p\right].
 \end{align*}
When $p\geq1$, it holds by H\"{o}lder's inequality and 
Assumption \ref{assumption-2} that 
\begin{align*}
    \mathbb{E}\left[\left|\int_{(2i-1)\Delta_n}^{{2i}\Delta_n} (b_s-b_{t_{(n,j)}})\,ds\right|^p\right]
    &\leq 
    \Delta_n^{p-1} \int_{(2i-1)\Delta_n}^{{2i}\Delta_n} \mathbb{E}\left[\left|b_s-b_{t_{(n,j)}}\right|^p\right]\,ds
    \leq  \Delta_n^{p(\tilde{\kappa}+1)}. 
\end{align*}
When $p\in(0,1)$, it holds by H\"{o}lder's inequality for $q>1$ that
\begin{align*}
     \mathbb{E}\left[\left|\int_{(2i-1)\Delta_n}^{2i\Delta_n} (b_s-b_{t_{(n,j)}})\,ds\right|^p\right] 
     &\leq  \left(\mathbb{E}\left[\left|\int_{(2i-1)\Delta_n}^{2i\Delta_n} (b_s-b_{t_{(n,j)}})\,ds\right|^q\right]\right)^{p/q} 
     \leq \Delta_n^{p(\tilde{\kappa}+1)}. 
\end{align*}
Similarly, we have that for $p>0$, 
\begin{align*}
     \mathbb{E}\left[\left|\int_{(2i-2)\Delta_n}^{(2i-1)\Delta_n} (b_s-b_{t_{(n,j)}})\,ds\right|^p\right] \leq \Delta_n^{p(\tilde{\kappa}+1)} .
\end{align*}

By Lemma \ref{le:Mom-esti} and Assumption \ref{assumption-2}, we obtain that for any $p\in(0,\beta)$, 
\begin{align*}
     \mathbb{E}\left[\left| \int_{(2i-1)\Delta_n}^{2i\Delta_n} (\sigma_{s-}-\sigma_{t_{(n,j)}})\,dZ_s\right|^p\right] 
     &\leq K_{\beta,p}  \left(\mathbb{E}\left[ \int_{(2i-1)\Delta_n}^{2i\Delta_n} \left|\sigma_{s-}-\sigma_{t_{(n,j)}}\right|^\beta\,ds\right]\right)^{p/\beta}
     \\
     &\leq K_{\beta,p} \Delta_n^{p(\kappa+1/\beta)}.
\end{align*}
Similarly, we obtain
\begin{align*}
     \mathbb{E}\left[\left| \int_{(2i-2)\Delta_n}^{(2i-1)\Delta_n} (\sigma_{s-}-\sigma_{t_{(n,j)}})\,dZ_s\right|^p\right] 
     &\leq K_{\beta,p} \Delta_n^{p(\kappa+1/\beta)}.
\end{align*}
By combing the above estimates, we obtain
\begin{align*}
 \mathbb{E}[|\Delta_{2i}^nX-\Delta_{2i-1}^nX - \sigma_{t_{(n,j)}}(\Delta_{2i}^nZ-\Delta_{2i-1}^nZ)|^p]
 \leq K_{p,\beta} \Delta_n^{p((\tilde{\kappa}+1)\wedge(\kappa+1/\beta))},
\end{align*}
which implies that
\begin{align}
\label{eq:key-esti-diff}
    |\Delta_{2i}^nX-\Delta_{2i-1}^nX - \sigma_{t_{(n,j)}}(\Delta_{2i}^nZ-\Delta_{2i-1}^nZ)| = O_\mathbb{P}(\Delta_n^{(\tilde{\kappa}+1)\wedge(\kappa+1/\beta)}).
\end{align}

For the $j$-th block, define
\begin{align*}
    \tilde{\Theta}_{n,j}(p,\beta)\equiv\dfrac{1}{\Delta_n^{p/\beta}}\sum_{i\in\tilde{\mathcal{I}}_{n,j}}|\Delta_{2i}^nZ-\Delta_{2i-1}^nZ|^p,\,\,p>0.
\end{align*}
By 
$\Delta_{2i}^nZ \overset{\mathcal{L}}=\Delta_n^{1/\beta}Z_1,\Delta_{2i-1}^nZ \overset{\mathcal{L}}=\Delta_n^{1/\beta}Z_1$ and 
the additivity property of independent stable variables, 
we obtain
\begin{align}
\label{eq:same-law-tilde-Snj}
\tilde{\Theta}_{n,j}(p,\beta)\overset{\mathcal{L}}=\sum_{i\in\tilde{\mathcal{I}}_{n,j}}|\tilde{Z_i}|^p,
\end{align}
where $(\tilde{Z}_i)_{i\in\tilde{\mathcal{I}}_{n,j}}$ are i.i.d.\ $\beta$-stable 
random variables with 
$\mathbb{E}[e^{iu\tilde{Z}_i}]\equiv e^{-|u|^{\beta}}.$

Note that we have the following 
decomposition:
\begin{align}
\label{eq:decomposition-diff}
&\frac{k}{2} \tilde{\sigma}_{n,j}(p) - \sigma_{n,t}^{p} \tilde{\Theta}_{n,j}(p,\beta) \nonumber\\
&= \frac{k}{2}\tilde{\sigma}_{n,j}(p) - \Delta_n^{p/\beta}\sigma_{t_{(n,j)}}^p \tilde{\Theta}_{n,j}(p,\beta)+(\Delta_n^{p/\beta}\sigma_{t_{(n,j)}}^p - \sigma_{n,t}^{p}) \tilde{\Theta}_{n,j}(p,\beta).
\end{align}
Since $\tilde{\Theta}_{n,j}(p,\beta)=O_{\mathbb{P}}(1)$, it holds by Assumption \ref{assumption-2} that for
$p>0$,
\begin{align}
\label{eq:second-term-diff}
   |(\Delta_n^{p/\beta}\sigma_{t_{(n,j)}}^p - \sigma_{n,t}^{p}) \tilde{\Theta}_{n,j}(p,\beta)|
&=O_{\mathbb{P}}(\Delta_n^{p/\beta+\kappa(1\wedge p)}).
\end{align}

For $p\in (0,1]$,  it holds by $|a^p - b^p| \le |a - b|^p,\,\forall a,b \ge 0$ and 
\eqref{eq:key-esti-diff} that
\begin{align*}     
    ||\Delta_{2i}^nX-\Delta_{2i-1}^nX|^p-|\sigma_{t_{(n,j)}}(\Delta_{2i}^nZ-\Delta_{2i-1}^nZ)|^p|  = O_{\mathbb{P}}(\Delta_n^{p((1+\tilde{\kappa})\wedge(\kappa+1/\beta))}).
\end{align*}
And so, the triangle inequality implies that
\begin{align}
\label{eq:first-term-smaller-1-diff}
    \frac{k}{2}\tilde{\sigma}_{n,j}(p) - \Delta_n^{p/\beta}\sigma_{t_{(n,j)}}^p \tilde{\Theta}_{n,j}(p,\beta) = O_{\mathbb{P}}(\Delta_n^{p((1+\tilde{\kappa})\wedge(\kappa+1/\beta))}).
\end{align}
Combing \eqref{eq:decomposition-diff}, \eqref{eq:second-term-diff} and \eqref{eq:first-term-smaller-1-diff}, we obtain
\begin{align*}
   \frac{k}{2} \tilde{\sigma}_{n,j}(p) - \sigma_{n,t}^{p} \tilde{\Theta}_{n,j}(p,\beta)  = O_{\mathbb{P}}(\Delta_n^{p((1+\tilde{\kappa})\wedge(\kappa+1/\beta))}).
\end{align*}
Since $\sigma_{n,t}^{p}=O_\mathbb{P}(\Delta_n^{p/\beta})$, we have
\begin{align}
\label{eq:case-smaller-1-diff}
    \dfrac{\tilde{\sigma}_{n,j}(p)}{\sigma_{n,t}^{p}} - \dfrac{2}{k} \tilde{\Theta}_{n,j}(p,\beta)  = O_{\mathbb{P}}(\Delta_n^{p(\kappa\wedge(1+\tilde{\kappa}-1/\beta))}).
\end{align}

For $p>1$, by $|a^p - b^p| \le p\,(a^{p-1} + b^{p-1})|a - b|,\,\forall a,b \ge 0$, $|\Delta_{2i}^nX-\Delta_{2i-1}^nX|=O_{\mathbb{P}}(\Delta_n^{1/\beta})$, $|\Delta_{2i}^nZ-\Delta_{2i-1}^nZ|=O_{\mathbb{P}}(\Delta_n^{1/\beta})$ 
and \eqref{eq:key-esti-diff}, we obtain
\begin{align*}
    ||\Delta_{2i}^nX-\Delta_{2i-1}^nX|^p-|\sigma_{t_{(n,j)}}(\Delta_{2i}^nZ-\Delta_{2i-1}^nZ)|^p| = O_{\mathbb{P}}(\Delta_n^{(1+\tilde{\kappa})\wedge(\kappa+1/\beta)+(p-1)/\beta}).
\end{align*}
And so, the triangle inequality implies that
\begin{align}
\label{eq:first-term-bigger-1-diff}
   \frac{k}{2}\hat{\sigma}_{n,j}(p) - \Delta_n^{p/\beta}\sigma_{t_{(n,j)}}^p \Theta_{n,j}(p,\beta) = O_{\mathbb{P}}(\Delta_n^{(1+\tilde{\kappa})\wedge(\kappa+1/\beta)+(p-1)/\beta}).
\end{align}
Combing \eqref{eq:decomposition-diff}, \eqref{eq:second-term-diff} and \eqref{eq:first-term-bigger-1-diff}, we obtain
\begin{align*}
   \frac{k}{2} \tilde{\sigma}_{n,j}(p) - \sigma_{n,t}^{p} \tilde{\Theta}_{n,j}(p,\beta)  = O_{\mathbb{P}}(\Delta_n^{(1+\tilde{\kappa})\wedge(\kappa+1/\beta)+(p-1)/\beta}).
\end{align*}
Since $\sigma_{n,t}^{p}=O_\mathbb{P}(\Delta_n^{p/\beta})$, we have
\begin{align}
\label{eq:case-bigger-1-diff}
     \dfrac{\tilde{\sigma}_{n,j}(p)}{\sigma_{n,t}^{p}} - \dfrac{2}{k} \tilde{\Theta}_{n,j}(p,\beta) = O_{\mathbb{P}}(\Delta_n^{\kappa\wedge(1+\tilde{\kappa}-1/\beta)}).
\end{align}
The proof is completed by combing \eqref{eq:same-law-tilde-Snj}, \eqref{eq:case-smaller-1-diff} and \eqref{eq:case-bigger-1-diff}.
\end{proof}

%%%%%%%%%%%%%%%%%%%%%%%%%%%%%%%%%%%%%%%%%%
%%%%%%%%% Proof of Lemma 1 %%%%%%%%%%%%%%%
%%%%%%%%%%%%%%%%%%%%%%%%%%%%%%%%%%%%%%%%%%

\begin{proof}[Proof of Lemma \ref{le:quantile-continuity}]
Let $\beta_1,\beta_2\in(0,2)$ be the activity index related to stable variables $Z^{(\beta_1)}$ and $Z^{(\beta_2)}$ satisfying
for any $u\in\mathbb{R}$ that 
$
\mathbb{E}[e^{iuZ^{(\beta_1)}}] = e^{-|u|^{\beta_1}/2}
\,\,\text{and}\,\,\mathbb{E}[e^{iuZ^{(\beta_2)}}] = e^{-|u|^{\beta_2}/2},
$
respectively. 
As $\beta_1\rightarrow\beta_2$ , it is clear that 
$\mathbb{E}[e^{iuZ^{(\beta_1)}}]\rightarrow\mathbb{E}[e^{iuZ^{(\beta_2)}}]$
for each $u\in\mathbb{R}$. Then,
by the L\'{e}vy continuity theorem, we obtain 
$
    Z^{(\beta_1)} \xrightarrow{\mathcal{L}}Z^{(\beta_2)},\,\,\text{as}\,\,\beta_1\rightarrow\beta_2.
$

For any $k\geq1$, let $(Z_i^{(\beta_1)})_{1\leq i\leq k}$ and $(Z_j^{(\beta_2)})_{1\leq j\leq k}$ be the sequence of independent copy of  $Z^{(\beta_1)}$  and $Z^{(\beta_2)}$, respectively. Then we have
\begin{align*}
    (Z_1^{(\beta_1)},\ldots,Z_k^{(\beta_1)}) \xrightarrow{\mathcal{L}}(Z_1^{(\beta_2)},\ldots,Z_k^{(\beta_2)}),\,\,\text{as}\,\,\beta_1\rightarrow\beta_2.
\end{align*}

For any $p>0$, define
\begin{align*}
    \bar{S}_{k}^{(\beta_1)}(p) \equiv \dfrac{1}{k}\sum_{i=1}^{k} |Z_i^{(\beta_1)}|^p\,\, \text{and}\,\,
     \bar{S}_{k}^{(\beta_2)}(p) \equiv \dfrac{1}{k}\sum_{j=1}^{k} |Z_j^{(\beta_2)}|^p.
\end{align*}
Since the map $T:\mathbb R^k\to\mathbb R$ with
 $
   T(x_1,\dots,x_k)=\sum_{i=1}^k |x_i|^p  
 $
 is continuous for any $p>0$, 
 by the continuous mapping theorem, we obtain
\begin{align*}
     \bar{S}_{k}^{(\beta_1)}(p)\xrightarrow{\mathcal{L}} \bar{S}_{k}^{(\beta_2)}(p),\,\,\text{as}\,\,\beta_1\rightarrow\beta_2.
\end{align*}

Let $F^{(\beta_1)}:[0,\infty)\rightarrow[0,1]$ and $F^{(\beta_2)}:[0,\infty)\rightarrow[0,1]$ be the  cumulative distribution functions of $\bar{S}_{k}^{(\beta_1)}(p) $  and $\bar{S}_{k}^{(\beta_2)}(p) $, respectively. Then, for every $x\in[0,\infty)$,
\begin{align}
\label{eq:cdf-convergence}
    F^{(\beta_1)}(x)\rightarrow F^{(\beta_2)} (x) ,\,\,\text{as}\,\,\beta_1\rightarrow\beta_2.
\end{align}

We now claim that $F^{(\beta_i)},i=1,2$ are continuous and strictly increasing on $[0,\infty)$. Note that, for any $i=1,2$ and $j=1,\ldots,k$,
$Z^{(\beta_i)}_j$ has continuous distribution and  infinitely 
differentiable density on $\mathbb{R}$
(see Theorem 1.1 in  \cite{nolan2020univariate}). Furthermore, the support of density function is $\mathbb{R}$ (see Lemma 1.1 in \cite{nolan2020univariate}).

Let 
$f_{Z_j^{(\beta_i)}}$ be the probability density function of 
$Z^{(\beta_i)}_j$. 
We have for
$x\in[0,\infty)$
\begin{align*}
\mathbb{P}\left[\left|Z^{(\beta_i)}_j\right|^p\le x\right] = \mathbb{P}\left[\left|Z^{(\beta_i)}_j\right|\le x^{1/p}\right] = \int_{-x^{1/p}}^{x^{1/p}} f_{Z_j^{(\beta_i)}}(y)\,dy.
\end{align*}
Let 
$f_{|Z_j^{(\beta_i)}|^p}$ be the probability density function of 
$|Z^{(\beta_i)}_j|^p$. We have for 
$x\in[0,\infty)$,
\begin{align*}
    f_{|Z^{(\beta_i)}_j|^p} (x)
    &= \frac{1}{p}x^{1/p-1}f_{Z_j^{(\beta_i)}}(x^{1/p}) + \frac{1}{p}x^{1/p-1} f_{Z_j^{(\beta_i)}}(-x^{1/p})
    =\frac{2}{p}x^{1/p-1}f_{Z_j^{(\beta_i)}}(x^{1/p}),
\end{align*}
where the second inequality follows from the symmetry of $f_{Z_j^{(\beta_i)}}$. It is clear  that $f_{|Z^{(\beta_i)}_j|^p}$  is strictly positive and infinitely differentiable on $(0,\infty)$  ae well as $f_{|Z^{(\beta_i)}_j|^p}\in L^{1}([0,\infty))$. 
Let $f_{\bar{S}^{(\beta_i)}_k(p)}$ be the probability density function of 
$\bar{S}^{(\beta_i)}_k(p),\,i=1,2$. Then we have for $x\in(0,\infty)$,
$$
  f_{\bar{S}^{(\beta_i)}_k(p)} (x) = f_{|Z^{(\beta_i)}_j|^p}^{*k}(x),
$$
where $*k$ denotes convolution for $k\geq2$ times. Moreover, $f_{\bar{S}^{(\beta_i)}_k(p)}$ is strictly positive and infinitely differentiable  on $(0,\infty)$ (see Propositions 8.10 and 8.11 in \cite{folland1999real}). Therefore, 
\begin{align*}
    F^{(\beta_i)}(x) = \int_0^{x} f_{\bar{S}^{(\beta_i)}_k(p)} (y) \,dy 
\end{align*}
is continuous and strictly increasing on $(0,\infty)$. On the other hand, it is clear that $F^{(\beta_i)}(x) =0$ for $x<0$. Since there is no atom at the
point $x=0$, we have $F^{(\beta_i)}(0) =0$. In addition, 
\begin{align*}
    F^{(\beta_i)}(0^{+}) = \lim_{x\downarrow 0}F^{(\beta_i)}(x) = \lim_{x\downarrow 0}\mathbb{P}[0\leq \bar{S}^{(\beta_i)}_k(p)\leq x] =0 = F^{(\beta_i)}(0),
\end{align*}
and so, $F^{(\beta_i)}$ is right continuous at $x=0$. Combing $F^{(\beta_i)}(x) =0$ for all $x\leq 0$, we have $F^{(\beta_i)}$
is continuous and strictly increasing on $[0,\infty)$.

For any $\alpha\in(0,1)$,
let $\bar{q}_{\alpha}(\beta_1)$ and $\bar{q}_{\alpha}(\beta_2)$ be the $\alpha$-quantile of 
$\bar{S}_{k}^{(\beta_1)}(p)$ and $\bar{S}_{k}^{(\beta_2)}(p)$, respectively.
{Since $F^{(\beta_2)}$ is continuous and strictly increasing}, 
for any $\epsilon>0$, we have
\[
F^{(\beta_2)}(\bar{q}_\alpha(\beta_2)-\epsilon)<\alpha<
F^{(\beta_2)}(\bar{q}_\alpha(\beta_2)+\epsilon).
\]
Define
\[
\epsilon_1\equiv\alpha-F^{(\beta_2)}(\bar{q}_\alpha(\beta_2)-\epsilon)>0,
\quad 
\epsilon_2\equiv F^{(\beta_2)}(\bar{q}_\alpha(\beta_2)+\epsilon)-\alpha>0,\quad\text{and}\quad \epsilon_0\equiv\min(\epsilon_1,\epsilon_2).
\] 
Since $F^{(\beta_1)}(x)\to F^{(\beta_2)}(x)$ as $\beta_1\to\beta_2$ for each fixed $x$,
there exists $\delta>0$ such that whenever $|\beta_1-\beta_2|<\delta$,
\[
|F^{(\beta_1)}(\bar{q}_\alpha(\beta_2)\pm\epsilon)-F^{(\beta_2)}(\bar{q}_\alpha(\beta_2)\pm\epsilon)|
<\epsilon_0.
\]

 For the left point \(\bar{q}_\alpha(\beta_2)-\epsilon\), since \(\epsilon_0\le\epsilon_1\), we have
\begin{align*}
F^{(\beta_1)}(\bar{q}_\alpha(\beta_2)-\epsilon)
&= F^{(\beta_2)}(\bar{q}_\alpha(\beta_2)-\epsilon)
+ \big(F^{(\beta_1)}(\bar{q}_\alpha(\beta_2)-\epsilon)-F^{(\beta_2)}(\bar{q}_\alpha(\beta_2)-\epsilon)\big)\\
&< F^{(\beta_2)}(\bar{q}_\alpha(\beta_2)-\epsilon) + \epsilon_0
= \alpha - \epsilon_1 + \epsilon_0
\leq \alpha.
\end{align*}
For the right point $\bar{q}_\alpha(\beta_2)+\epsilon$, since \(\epsilon_0\le\epsilon_1\), we have
\begin{align*}
F^{(\beta_1)}(\bar{q}_\alpha(\beta_2)+\epsilon)
&= F^{(\beta_2)}(\bar{q}_\alpha(\beta_2)+\epsilon)
+ \big(F^{(\beta_1)}(\bar{q}_\alpha(\beta_2)+\epsilon)-F^{(\beta_2)}(\bar{q}_\alpha(\beta_2)+\epsilon)\big)\\
&> F^{(\beta_2)}(\bar{q}_\alpha(\beta_2)+\epsilon) - \epsilon_0
= \alpha + \epsilon_2 - \epsilon_0
\geq \alpha.
\end{align*}
Combining the above estimates, we obtain
\[
F^{(\beta_1)}(\bar{q}_\alpha(\beta_2)-\epsilon) < \alpha < F^{(\beta_1)}(\bar{q}_\alpha(\beta_2)+\epsilon).
\]
Since $F^{(\beta_1)}$ is strictly increasing, it follows that
\[
\bar{q}_\alpha(\beta_2)-\epsilon < \bar{q}_\alpha(\beta_1) < \bar{q}_\alpha(\beta_2)+\epsilon,
\]
which implies that $\bar{q}_\alpha(\beta_1)\to \bar{q}_\alpha(\beta_2)$
as $\beta_1\to\beta_2$.

For the continuity of $\tilde{q}_{\alpha}(\beta)$, since $(\tilde{Z}_i)_{i\in\tilde{\mathcal{I}}_{n,j}}$ is also a sequence of i.i.d. symmetric $\beta$-stable variables,  
the rest of proof is same as in the case of 
$\bar{q}_{\alpha}(\beta)$, and we omit it.
\end{proof}

\subsection{Proofs in {\rm Section \ref{sect:large-k}}}

%%%%%%%%%%%%%%%%%%%%%%%%%%%%%%%%%%%%%%%%%%
%%%%%%%%% Proof of Theorem 3 %%%%%%%%%%%%%
%%%%%%%%%%%%%%%%%%%%%%%%%%%%%%%%%%%%%%%%%%

\begin{proof}[Proof of Theorem \ref{th:large-k-1}]
Note that
we have the following decomposition:
\begin{align}
\label{eq:large-k-decomp}
    &k_{n}^{1/2}(\hat{\sigma}_{n,j}(p,\beta) - \sigma_{n,t}^p) - \sigma_{n,t}^p\Xi_{n,j}(p,\beta) \nonumber\\
    &=  k_{n}^{1/2}(\Delta_n^{p/\beta}\sigma_{t_{(n,j)}}^p - \Delta_n^{p/\beta}\sigma_{t}^p) 
    +(\Delta_n^{p/\beta}\sigma_{t_{(n,j)}}^p - \Delta_n^{p/\beta}\sigma_{t}^p) \Xi_{n,j}(p,\beta)
    + R_{n,j},
\end{align}
where 
\begin{align*}
   R_{n,j} &\equiv k_{n}^{1/2}(\hat{\sigma}_{n,j}(p,\beta)-\Delta_n^{p/\beta}\sigma_{t_{(n,j)}}^p ) - \Delta_n^{p/\beta}\sigma_{t_{(n,j)}}^p\Xi_{n,j}(p,\beta). 
  \end{align*}

 For the first term in \eqref{eq:large-k-decomp}, we obtain that for $0<p<\beta/2<1$,
\begin{align}
\label{eq:esti-first}
    \mathbb{E}[|k_{n}^{1/2}(\Delta_n^{p/\beta}\sigma_{t_{(n,j)}}^p - \Delta_n^{p/\beta}\sigma_{t}^p)|] 
    &\leq Kk_{n}^{1/2}\Delta_n^{p/\beta}\mathbb{E}[|\sigma_{t_{(n,j)}} - \sigma_t|^p]
    \nonumber\\
    &\leq Kk_{n}^{1/2}\Delta_n^{p/\beta}(k_{n}\Delta_n)^{p\kappa}
   \nonumber \\
    &\leq K \Delta_n^{p\left(\kappa +\frac{1}{\beta}-(\kappa+\frac{1}{2p})\gamma\right)},
\end{align}
where the first inequality follows by 
$|a^p-b^p|\leq |a-b|^p,\forall a,b\geq0$,
the second inequality follows by the condition 
(ii) in Assumption \ref{assumption}, and the last inequality follows from $k_{n} \asymp\Delta_n^{-\gamma}$.

For the second term in \eqref{eq:large-k-decomp}, since $\Xi_{n,j}(p,\beta)=O_{\mathbb{P}}(1)$, we have
\begin{align}
\label{eq:esti-second}
   \mathbb{E}[|(\Delta_n^{p/\beta}\sigma_{t_{(n,j)}}^p - \Delta_n^{p/\beta}\sigma_{t}^p)\Xi_{n,j}(p,\beta)| ] 
   &\leq 
   K\Delta_n^{p/\beta}(k_{n}\Delta_n)^{p\kappa} 
\leq 
K \Delta_n^{p\left(\kappa +\frac{1}{\beta}-\kappa \gamma\right)}.
\end{align}

For $R_{n,j}$, it holds by (\ref{eq:large-k-estimator}) and
the definition of $\Xi_{n,j}(p,\beta)$ that
 \begin{align*}  
   R_{n,j}
   &= k_{n}^{1/2}\left(\frac{1}{c_{\beta}(p)k_{n}}\sum_{i \in \mathcal{I}_{n,j}} |\Delta_i^n X|^p-\Delta_n^{p/\beta}\sigma_{t_{(n,j)}}^p \right) 
 - 
   \frac{\Delta_n^{p/\beta}\sigma_{t_{(n,j)}}^p}{k_{n}^{1/2}} \sum_{i\in\mathcal{I}_{n,j}}\left(\dfrac{|\Delta_i^n Z|^p}{c_{\beta}(p)\Delta_n^{p/\beta}}-1\right)
   \\
    &= k_{n}^{-1/2}c_{\beta}(p)^{-1} \sum_{i\in \mathcal{I}_{n,j}} (|\Delta_i^n X|^p-|\sigma_{t_{(n,j)}}\Delta_i^n Z|^p).
\end{align*}
And so, the triangle inequality implies that
\begin{align*}
    \mathbb{E}[|R_{n,j}|] &\leq K_{p,\beta}k_{n}^{-1/2} \sum_{i\in\mathcal{I}_{n,j}} \mathbb{E}[|\Delta_i^n X-\sigma_{t_{(n,j)}}\Delta_i^nZ|^p].
\end{align*}

From \eqref{eq:Price-process}, we have
\begin{align}
\label{eq:core-inequality-stable}
    \mathbb{E}[|\Delta_i^n X-\sigma_{t_{(n,j)}}\Delta_i^n Z|^p]
   & \leq K_p
    \mathbb{E}\left[\left|\int_{(i-1)\Delta_n}^{i\Delta_n} b_s \,ds\right|^p\right]
 +K_p
    \mathbb{E}\left[\left|\int_{(i-1)\Delta_n}^{i\Delta_n} (\sigma_{s-}-\sigma_{t_{(n,j)}}) \,dZ_s\right|^p\right]
     \nonumber\\
    &\leq K_p\Delta_n^{p}+K_{p\beta}\left(\int_{(i-1)\Delta_n}^{i\Delta_n}\mathbb{E}[ |\sigma_{s}-\sigma_{t_{(n,j)}}|^{\beta} ]\,ds\right)^{p/\beta}
    \nonumber \\
    &\leq K_p\Delta_n^{p}+K_{p,\beta}((k_{n}\Delta_n)^{\beta \kappa}\Delta_n)^{p/\beta}
    \nonumber \\
    &\leq K_{p,\beta}\Delta_n^{p(1\wedge(\kappa+1/\beta-\kappa\gamma))},
\end{align}
where the second inequality follows from Assumption \ref{assumption} (i)
and Lemma \ref{le:Mom-esti}, the third inequality follows by Assumption \ref{assumption} (ii),
and the last inequality follows by  $k_{n} \asymp\Delta_n^{-\gamma}$.
Subsequently, we have
\begin{align}
\label{eq:R_nj-smaller-1}
    \mathbb{E}[|R_{n,j}|] 
    \leq K_{p,\beta}k_{n}^{1/2}\Delta_n^{p(1\wedge(\kappa+1/\beta-\kappa\gamma))}
    % \nonumber\\
    % & =K_{p,\beta}\Delta_n^{p(1\wedge(\kappa+1/\beta-\kappa\gamma))-\gamma/2}
    % \nonumber\\
\leq K_{p,\beta}\Delta_n^{p\left(\left(1-\frac{\gamma}{2p}\right)\wedge\left(\kappa+\frac{1}{\beta}-(\kappa+\frac{1}{2p})\gamma\right)\right)}.
\end{align}

Hence,
by combing \eqref{eq:large-k-decomp}, \eqref{eq:esti-first}, \eqref{eq:esti-second} and \eqref{eq:R_nj-smaller-1}, we obtain
\begin{align*}
  \mathbb{E} \left[ \left|k_{n}^{1/2}(\hat{\sigma}_{n,j}(p,\beta) - \sigma_{n,t}^p) - \sigma_{n,t}^p\Xi_{n,j}(p,\beta)\right|\right] \leq K_{p,\beta} \Delta_n^{p\left(\left(1-\frac{\gamma}{2p}\right)\wedge\left(\kappa+\frac{1}{\beta}-(\kappa-\frac{1}{2p})\gamma\right)\right)}.
\end{align*}
Since $\sigma_{n,t}^p = O_{\mathbb{P}}(\Delta_n^{p/\beta})$, $\beta\in(1,2)$ and $0<\gamma <2p(1-\frac{1}{\beta})\wedge\frac{\kappa}{\kappa+\frac{1}{2p}}$,
we obtain
 \begin{align}
 \label{eq:point-coupling}
    \dfrac{k_{n}^{1/2}(\hat{\sigma}_{n,j}(p,\beta) - \sigma_{n,t}^p)}{\sigma_{n,t}^p} - \Xi_{n,j}(p,\beta) 
    % &= O_{\mathbb{P}}\left(\Delta_n^{p\left(\left(1-\frac{1}{\beta}-\frac{\gamma}{2p}\right)\wedge\left(\kappa-(\kappa+\frac{1}{2p})\gamma\right)\right)}\right) 
    % \nonumber\\
    &= o_{\mathbb{P}}\left(\Delta_n^{\varepsilon}\right),
\end{align}
where $0<\varepsilon<p((1-\frac{1}{\beta}-\frac{\gamma}{2p})\wedge(\kappa-(\kappa+\frac{1}{2p})\gamma))$.

For $f(x) = \log (x)$, we have $f'(x)x=1$.
By the mean-value theorem, it holds for some $\xi_{n,j}(p,\beta)$
between $\hat{\sigma}_{n,j}(p,\beta)$ and $\sigma_{n,t}^p$ that 
\begin{align*}
    k_n^{1/2} (\log(\hat{\sigma}_{n,j}(p,\beta))-\log (\sigma_{n,t}^p)) 
    &=  \dfrac{k_{n}^{1/2}(\hat{\sigma}_{n,j}(p,\beta) - \sigma_{n,t}^p)}{\sigma_{n,t}^p} -\dfrac{k_n^{1/2}|\hat{\sigma}_{n,j}(p,\beta) - \sigma_{n,t}^p|^2}{2\xi_{n,j}(p,\beta)^2}
    \\
    & = \Xi_{n,j}(p,\beta) + o_{\mathbb{P}}(\Delta_n^{\varepsilon})-\dfrac{k_n^{1/2}|\hat{\sigma}_{n,j}(p,\beta) - \sigma_{n,t}^p|^2}{2\xi_{n,j}(p,\beta)^2},
\end{align*}
where the second equality follows from \eqref{eq:point-coupling}.
Since $\sigma_{n,t}^p = O_{\mathbb{P}}(\Delta_n^{p/\beta})$, $\Xi_{n,j}(p,\beta)=O_{\mathbb{P}}(1)$ and $k_{n}\asymp\Delta_n^{-\gamma}$, 
we obtain  from \eqref{eq:point-coupling} that $k_{n}^{1/2}|\hat{\sigma}_{n,j}(p,\beta) - \sigma_{n,t}^p|^2 =  O_{\mathbb{P}}(\Delta_n^{2p/\beta+\gamma/2})$. 
On the other hand, Since $\xi_{n,j}(p,\beta)$ lies between $\hat{\sigma}_{n,j}(p,\beta)$ and $\sigma_{n,t}^p$, and by \eqref{eq:point-coupling} we have
$\hat{\sigma}_{n,j}(p,\beta)=\sigma_{n,t}^p\bigl(1+O_{\mathbb P}(\Delta_n^{\gamma/2})\bigr)$, it follows that
$\xi_{n,j}(p,\beta)=O_{\mathbb{P}}(\Delta_n^{p/\beta})$. Subsequently, 
we obtain 
\begin{align*}
   \dfrac{k_n^{1/2}|\hat{\sigma}_{n,j}(p,\beta) - \sigma_{n,t}^p|^2}{2\xi_{n,j}(p,\beta)^2} = O_{\mathbb{P}}(\Delta_n^{\gamma/2}).
\end{align*}
Hence, we obtain that for some $\varepsilon>0$,
\begin{align*}
   k_n^{1/2} (\log(\hat{\sigma}_{n,j}(p,\beta))-\log (\sigma_{n,t}^p))  - \Xi_{n,j}(p,\beta) =  o_{\mathbb{P}}(\Delta_n^{\varepsilon}).
\end{align*}

For $f(x) = x^{r},r>0$, we have $f'(x) = rx^{r-1}$ and $f''(x)=r(r-1)x^{r-2}$.
Similarly, it holds by the mean-value theorem, \eqref{eq:point-coupling} 
and $\sigma_{n,t}^p = O_{\mathbb{P}}(\Delta_n^{p/\beta})$ that
\begin{align*}
    &k_n^{1/2} (f(\hat{\sigma}_{n,j}(p,\beta))-f (\sigma_{n,t}^p)) 
    \\
    &=  f'(\sigma_{n,t}^p){k_{n}^{1/2}(\hat{\sigma}_{n,j}(p,\beta) - \sigma_{n,t}^p)} +\dfrac{f''(\xi_{n,j}(p,\beta))k_n^{1/2}|\hat{\sigma}_{n,j}(p,\beta) - \sigma_{n,t}^p|^2}{2}
    \\
    & = f'(\sigma_{n,t}^p)(\sigma_{n,t}^p\Xi_{n,j}(p,\beta)+o_{\mathbb{P}}(\Delta_n^{p/\beta+\varepsilon})) +\dfrac{f''(\xi_{n,j}(p,\beta))k_n^{1/2}|\hat{\sigma}_{n,j}(p,\beta) - \sigma_{n,t}^p|^2}{2}.
\end{align*}
% Note that 
% $f'(\sigma_{n,t}^p) = O_{\mathbb{P}}(\Delta_n^{(r-1)p/\beta})$ and 
Since $f''(\xi_{n,j}(p,\beta)) = O_{\mathbb{P}}(\Delta_n^{(r-2)p/\beta})$
and  $k_{n}^{1/2}|\hat{\sigma}_{n,j}(p,\beta) - \sigma_{n,t}^p|^2 =  O_{\mathbb{P}}(\Delta_n^{2p/\beta+\gamma/2})$, we obtain 
\begin{align*}
    \dfrac{f''(\xi_{n,j}(p,\beta))k_n^{1/2}|\hat{\sigma}_{n,j}(p,\beta) - \sigma_{n,t}^p|^2}{2} = O_{\mathbb{P}}(\Delta_n^{rp/\beta+\gamma/2}).
\end{align*}
Furthermore, since $f'(\sigma_{n,t}^p) = O_{\mathbb{P}}(\Delta_n^{(r-1)p/\beta})$, we obtain that for some $\varepsilon>0$,
\begin{align}
\label{eq:infeasible-transform-f}
    k_n^{1/2} (f(\hat{\sigma}_{n,j}(p,\beta))-f (\sigma_{n,t}^p)) -
    f'(\sigma_{n,t}^p)\sigma_{n,t}^p\Xi_{n,j}(p,\beta) = o_{\mathbb{P}} (\Delta_n^{rp/\beta+\varepsilon}).
\end{align}
Note that we have 
\begin{align*}
    \dfrac{  k_n^{1/2} (f(\hat{\sigma}_{n,j}(p,\beta))-f (\sigma_{n,t}^p)) }{ f'(\hat{\sigma}_{n,j}(p,\beta))\hat{\sigma}_{n,j}(p,\beta)} -\Xi_{n,j}(p,\beta) = A_{1,n} + A_{2,n},
\end{align*}
where
\begin{align*}
 A_{1,n} &= \left(\dfrac{f'({\sigma}_{n,t}^p){\sigma}_{n,t}^p}{f'(\hat{\sigma}_{n,j}(p,\beta))\hat{\sigma}_{n,j}(p,\beta)}-1\right)\Xi_{n,j}(p,\beta)=\left(\left(\dfrac{{\sigma}_{n,t}^{p}}{\hat{\sigma}_{n,j}(p,\beta)}\right)^{r}-1\right)\Xi_{n,j}(p,\beta) ,
 \\
  A_{2,n} &=  \dfrac{ k_n^{1/2} (f(\hat{\sigma}_{n,j}(p,\beta))-f (\sigma_{n,t}^p))- f'(\sigma_{n,t}^p)\sigma_{n,t}^p\Xi_{n,j}(p,\beta)}{f'(\hat{\sigma}_{n,j}(p,\beta))\hat{\sigma}_{n,j}(p,\beta)}.
\end{align*}
By $\hat{\sigma}_{n,j}(p,\beta)=\sigma_{n,t}^p\bigl(1+O_{\mathbb P}(\Delta_n^{\gamma/2})\bigr)$ and $\Xi_{n,j}(p,\beta)=O_{\mathbb{P}}(1)$, we have $A_{1,n} = o_{\mathbb{P}}(\Delta_n^{\varepsilon})$ for some $\epsilon>0$. 
For $A_{2,n}$, it holds by $\sigma_{n,t}^p=O_{\mathbb{P}}(\Delta_n^{p/\beta})$ that $f'(\hat{\sigma}_{n,j}(p,\beta))\hat{\sigma}_{n,j}(p,\beta) = O_{\mathbb{P}}(\Delta_n^{rp/\beta})$. Hence,  by combing \eqref{eq:infeasible-transform-f}, we also have $A_{2,n}= o_{\mathbb{P}}(\Delta_n^{\varepsilon})$ for some $\epsilon>0$. The proof is completed.
\end{proof}

%%%%%%%%%%%%%%%%%%%%%%%%%%%%%%%%%%%%%%%%%%
%%%%%%%%% Proof of Theorem 4 %%%%%%%%%%%%%
%%%%%%%%%%%%%%%%%%%%%%%%%%%%%%%%%%%%%%%%%%

\begin{proof}[Proof of Theorem \ref{th:large-k-1-stable}]
Note that
we have the following  decomposition:
\begin{align}
\label{eq:large-k-decomp-stable}
    &k_{n}^{1-p/\beta}(\hat{\sigma}_{n,j}(p,\beta) - \sigma_{n,t}^p) - \sigma_{n,t}^p{S}_{n,j}(p,\beta) \nonumber\\
    &=  k_{n}^{1-p/\beta}(\Delta_n^{p/\beta}\sigma_{t_{(n,j)}}^p - \Delta_n^{p/\beta}\sigma_{t}^p) 
    +(\Delta_n^{p/\beta}\sigma_{t_{(n,j)}}^p - \Delta_n^{p/\beta}\sigma_{t}^p) {S}_{n,j}(p,\beta)
    + R_{n,j}',
\end{align}
where 
\begin{align*}
    R_{n,j}' &=k_{n}^{1-p/\beta}(\hat{\sigma}_{n,j}(p,\beta)-\Delta_n^{p/\beta}\sigma_{t_{(n,j)}}^p ) - \Delta_n^{p/\beta}\sigma_{t_{(n,j)}}^p{S}_{n,j}(p,\beta). 
  \end{align*}

For $p\in(\beta/2,1)$, we have
\begin{align}
\label{eq:esti-first-stable}
    \mathbb{E}[|k_{n}^{1-p/\beta}(\Delta_n^{p/\beta}\sigma_{t_{(n,j)}}^p - \Delta_n^{p/\beta}\sigma_{t}^p)|] 
    &\leq 
    Kk_{n}^{1-p/\beta}\Delta_n^{p/\beta}(k_{n}\Delta_n)^{p\kappa}
   \nonumber \\
    &\leq K \Delta_n^{p\left(\kappa +\frac{1}{\beta}-(\kappa+\frac{1}{p}-\frac{1}{\beta})\gamma\right)}.
\end{align}
Since ${S}_{n,j}(p,\beta)=O_{\mathbb{P}}(1)$, we also have
\begin{align}
\label{eq:esti-second-stable}
   \mathbb{E}[|(\Delta_n^{p/\beta}\sigma_{t_{(n,j)}}^p - \Delta_n^{p/\beta}\sigma_{t}^p){S}_{n,j}(p,\beta)| ] &\leq  K \Delta_n^{p\left(\kappa +\frac{1}{\beta}-\kappa \gamma\right)}.
\end{align}
For ${R_{n,j}'}$, it holds by \eqref{eq:large-k-estimator} and
the definition of $S_{n,j}(p,\beta)$ that
 \begin{align*}  
   R_{n,j}'
   &= k_{n}^{1-p/\beta}\left(\frac{1}{c_{\beta}(p)k_{n}}\sum_{i \in \mathcal{I}_{n,j}} |\Delta_i^n X|^p-\Delta_n^{p/\beta}\sigma_{t_{(n,j)}}^p \right) 
  - 
   \frac{\Delta_n^{p/\beta}\sigma_{t_{(n,j)}}^p}{k_{n}^{p/\beta}} \sum_{i\in\mathcal{I}_{n,j}}\left(\dfrac{|\Delta_i^n Z|^p}{c_{\beta}(p)\Delta_n^{p/\beta}}-1\right)
    \\
    &= k_{n}^{-p/\beta}
    c_{\beta}(p)^{-1} \sum_{i\in \mathcal{I}_{n,j}} (|\Delta_i^n X|^p-|\sigma_{t_{(n,j)}}\Delta_i^n Z|^p).
\end{align*}
And so, the triangle inequality and \eqref{eq:core-inequality-stable} imply that
\begin{align}
\label{eq:R_nj-stable}
    \mathbb{E}[|R_{n,j}'|] 
    &\leq K_{p,\beta}k_{n}^{-p/\beta} \sum_{i\in\mathcal{I}_{n,j}} \mathbb{E}[|\Delta_i^n X-\sigma_{t_{(n,j)}}\Delta_i^nZ|^p]
    \nonumber\\
&\leq K_{p,\beta}\Delta_n^{p\left(\left(1-(\frac{1}{p}-\frac{1}{\beta})\gamma\right)\wedge\left(\kappa+\frac{1}{\beta}-(\kappa+\frac{1}{p}-\frac{1}{\beta})\gamma\right)\right)}.
\end{align}
Combing \eqref{eq:large-k-decomp-stable}, \eqref{eq:esti-first-stable}, \eqref{eq:esti-second-stable} and \eqref{eq:R_nj-stable}, we obtain
\begin{align*}
  \mathbb{E} \left[ \left|k_{n}^{1-1/\beta}(\hat{\sigma}_{n,j}(p,\beta) - \sigma_{n,t}^p) - \sigma_{n,t}^p{S}_{n,j}(p,\beta)\right|\right] \leq
 K_{p,\beta}\Delta_n^{p\left(\left(1-(\frac{1}{p}-\frac{1}{\beta})\gamma\right)\wedge\left(\kappa+\frac{1}{\beta}-(\kappa+\frac{1}{p}-\frac{1}{\beta})\gamma\right)\right)}.
\end{align*}
Since $\sigma_{n,t}^p = O_{\mathbb{P}}(\Delta_n^{p/\beta})$, $\beta\in(1,2)$ and $0<\gamma <\frac{1-\frac{1}{\beta}}{\frac{1}{p}-\frac{1}{\beta}}\wedge\frac{\kappa}{\kappa+\frac{1}{p}-\frac{1}{\beta}}$, we have 
 \begin{align*}
    \dfrac{k_{n}^{1-1/\beta}(\hat{\sigma}_{n,j}(p,\beta) - \sigma_{n,t}^p)}{\sigma_{n,t}^p} - {S}_{n,j}(p,\beta) 
    % &= O_{\mathbb{P}}\left(\Delta_n^{p\left(\left(1-\frac{1}{\beta}-(\frac{1}{p}-\frac{1}{\beta})\gamma\right)\wedge\left(\kappa-(\kappa+\frac{1}{p}-\frac{1}{\beta})\gamma\right)\right)}\right) 
    % \\
    &= o_{\mathbb{P}}\left(\Delta_n^{\varepsilon}\right),
\end{align*}
where $0<\varepsilon<p\left(\left(1-\frac{1}{\beta}-(\frac{1}{p}-\frac{1}{\beta})\gamma\right)\wedge\left(\kappa-(\kappa+\frac{1}{p}-\frac{1}{\beta})\gamma\right)\right)$.

For $p\in[1,\beta)$, it holds by the inequality $|a^p-b^p| \leq p (a^{p-1}+b^{p-1})|a-b|,\,\forall a,b\ge0$ that
\begin{align}
\label{eq:esti-first-stable-big-1}
    \mathbb{E}[|k_{n}^{1-p/\beta}(\Delta_n^{p/\beta}\sigma_{t_{(n,j)}}^p - \Delta_n^{p/\beta}\sigma_{t}^p)|] 
   %  &\leq Kk_{n}^{1-p/\beta}\Delta_n^{p/\beta}(k_{n}\Delta_n)^{\kappa}
   % \nonumber \\
    &\leq K \Delta_n^{\kappa +\frac{p}{\beta}-(\kappa+1-\frac{p}{\beta})\gamma},
\end{align}
and that
\begin{align}
\label{eq:esti-second-stable-big-1}
   \mathbb{E}[|(\Delta_n^{p/\beta}\sigma_{t_{(n,j)}}^p - \Delta_n^{p/\beta}\sigma_{t}^p){S}_{n,j}(p,\beta)| ] &\leq  K \Delta_n^{\kappa +\frac{p}{\beta}-\kappa \gamma}.
\end{align}
For $R_{n,j}'$, we have for triangle inequality that
\begin{align*}
    \mathbb{E}[|R_{n,j}'|] &\leq K_{p,\beta}k_{n}^{-p/\beta} \sum_{i\in\mathcal{I}_{n,j}} \mathbb{E}[||\Delta_i^n X|^p-|\sigma_{t_{(n,j)}}\Delta_i^nZ|^p|].
\end{align*}
Note that for $p\geq1$,
\begin{align*}
    &\mathbb{E}\left[\left||\Delta_i^n X|^p-|\sigma_{t_{(n,j)}}\Delta_i^n Z|^p \right|\right]
    \\
    &\leq K_p \mathbb{E}\left[(|\Delta_i^n X|^{p-1}+|\Delta_i^n Z|^{p-1})|\Delta_i^n X-\sigma_{t_{(n,j)}}\Delta_i^n Z|\right]
    \\
    &\leq K_p \left(\mathbb{E}\left[(|\Delta_i^n X|^{p-1}+|\Delta_i^n Z|^{p-1})^{\frac{p}{p-1}}\right]\right)^{\frac{p-1}{p}}(\mathbb{E}[|\Delta_i^n X-\sigma_{t_{(n,j)}}\Delta_i^n Z|^p])^{1/p}
    \\
    &\leq K_p \left(\mathbb{E}\left[|\Delta_i^n X|^{p}+|\Delta_i^n Z|^{p}\right]\right)^{\frac{p-1}{p}}
    (\mathbb{E}[|\Delta_i^n X-\sigma_{t_{(n,j)}}\Delta_i^n Z|^p])^{1/p},
\end{align*}
where the second inequality follows from H\"{o}lder's inequality 
and the third inequality follows from the inequality $(a+b)^p\leq 2^{p-1} (a^p+b^p), \,\forall a,b\geq0$. 
As in \eqref{eq:core-inequality-stable}, we have $$(\mathbb{E}[|\Delta_i^n X-\sigma_{t_{(n,j)}}\Delta_i^n Z|^p])^{1/p} \leq K_{p,\beta}\Delta_n^{1\wedge(\kappa+\frac{1}{\beta}-\kappa\gamma)}.$$
On the other hand,
since $|\Delta_i^n X|=O_{\mathbb{P}}(\Delta_n^{1/\beta})$ and $|\Delta_i^n Z|=O_{\mathbb{P}}(\Delta_n^{1/\beta})$, we have
\begin{align*}
    \mathbb{E}\left[\left||\Delta_i^n X|^p-|\sigma_{t_{(n,j)}}\Delta_i^n Z|^p \right|\right] \leq K_{p,\beta}\Delta_n^{(1+\frac{p-1}{\beta})\wedge(\kappa+\frac{p}{\beta}-\kappa\gamma)},
\end{align*}
which implies that
\begin{align}
\label{eq:R_nj-stable-big-1}
\mathbb{E}[|R_{n,j}'|] 
&\leq K_{p,\beta}k_{n}^{1-p/\beta} \Delta_n^{(1+{(p-1)}/{\beta})\wedge(\kappa+\frac{p}{\beta}-\kappa\gamma)}
\nonumber\\
&\leq K_{p,\beta} \Delta_n^{(1+\frac{p-1}{\beta}-(1-\frac{p}{\beta})\gamma)\wedge(\kappa+\frac{p}{\beta}-(\kappa
+1-\frac{p}{\beta})\gamma)}.
\end{align}
Combing \eqref{eq:large-k-decomp-stable}, \eqref{eq:esti-first-stable-big-1}, \eqref{eq:esti-second-stable-big-1} and \eqref{eq:R_nj-stable-big-1}, we obtain
\begin{align*}
  \mathbb{E} \left[ \left|k_{n}^{1-1/\beta}(\hat{\sigma}_{n,j}(p,\beta) - \sigma_{n,t}^p) - \sigma_{n,t}^p{S}_{n,j}(p,\beta)\right|\right] 
  \leq
  K_{p,\beta} \Delta_n^{(1+\frac{p-1}{\beta}-(1-\frac{p}{\beta})\gamma)\wedge(\kappa+\frac{p}{\beta}-(\kappa
+1-\frac{p}{\beta})\gamma)}.
\end{align*}
Since $\sigma_{n,t}^p = O_{\mathbb{P}}(\Delta_n^{p/\beta})$, $\beta\in(1,2)$, $p\in[1,\beta)$ and $0<\gamma <\frac{\kappa}{\kappa+1-\frac{p}{\beta}}=\frac{\kappa}{\kappa+1-\frac{p}{\beta}}\wedge\frac{1-\frac{1}{\beta}}{1-\frac{p}{\beta}}$,
we have 
 \begin{align*}
    \dfrac{k_{n}^{1-p/\beta}(\hat{\sigma}_{n,j}(p,\beta) - \sigma_{n,t}^p)}{\sigma_{n,t}^p} - {S}_{n,j}(p,\beta) 
    % &= O_{\mathbb{P}}\left(\Delta_n^{\left(1-\frac{1}{\beta}-(1-\frac{p}{\beta})\gamma\right)\wedge\left(\kappa-(\kappa+1-\frac{p}{\beta})\gamma\right)}\right) 
    % \\
    &= o_{\mathbb{P}}\left(\Delta_n^{\varepsilon}\right),
\end{align*}
where $0<\varepsilon<{\left(1-\frac{1}{\beta}-(1-\frac{p}{\beta})\gamma\right)\wedge\left(\kappa-(\kappa+1-\frac{p}{\beta})\gamma\right)}$.
The rest proof for transform $f$ is similar to the proof in Theorem \ref{th:large-k-1}, and we omit it.
\end{proof}

%%%%%%%%%%%%%%%%%%%%%%%%%%%%%%%%%%%%%%%%%%
%%%%%%%%% Proof of Lemma 2 %%%%%%%%%%%%%%%
%%%%%%%%%%%%%%%%%%%%%%%%%%%%%%%%%%%%%%%%%%

\begin{proof}[Proof of Lemma \ref{le:lim-distri-S_nj}]
    By the self-similar property of stable process, we know that
    \begin{align*}
        S_{n,j}(p,\beta)\overset{\mathcal{L}}{=} \dfrac{1}{k_{n}^{p/\beta}}\sum_{i\in\mathcal{I}_{n,j}} \left(\dfrac{|Z_i|^p}{\mathbb{E}[|Z_i|^p]}-1\right),
    \end{align*}
    where $(Z_i)_{i\in\mathcal{I}_{n,j}}$
    are i.i.d. stable  variables
   with $Z_i\sim\mathcal{S}(\beta,0,2^{-1/\beta},0)$. For any
   $i\in\mathcal{I}_{n,j}$, let us set
   $\xi_i\equiv {|Z_i|^p}/{c_{\beta}(p)}$. Then $(\xi_i)_{i\in\mathcal{I}_{n,j}}$
   are i.i.d. variables with
   $\mathbb{E}[\xi_i] =1$.
   Note that 
   \begin{align*}
\lim_{x\rightarrow\infty}x^{\beta/p}\mathbb{P}[\xi_i>x] 
    &=  \lim_{x\rightarrow\infty} x^{\beta/p} \mathbb{P}[|Z_i|>(xc_{\beta}(p))^{1/p}]
= 
\dfrac{\Gamma(\beta)\sin(\pi\beta/2)}{\pi} c_{\beta}(p)^{-\beta/p},
   \end{align*}
   where the second equality follows from
 Theorem 1.2 in  \cite{nolan2020univariate}.
% Furthermore, it is clear that
% $\lim_{x\rightarrow\infty}x^{\beta/p}\mathbb{P}[\xi_i<-x] =0$.

By taking $\eta=\beta/p$, $c^-=0$ and
$c^+={c_{\beta}(p)^{-\beta/p}\Gamma(\beta)\sin(\pi\beta/2)}/{\pi} ,$
we have
\begin{align*}
   \delta &\equiv  \dfrac{c^+-c^-}{c^++c^-} =1,
   \\
    a_n &\equiv k_{n}^{-1/\eta}\left(\dfrac{2\Gamma(\eta)\sin(\pi\eta/2)}{\pi(c^++c^-)}\right)^{1/\eta} = C_{\beta}(p) k_{n}^{-p/\beta},
\\
   b_n&\equiv k_{n} a_n \mathbb{E}[\xi] = k_{n} C_{\beta}(p) k_{n}^{-p/\beta}.
\end{align*}
% where 
% \begin{align*}
%     C_{\beta}(p) =  c_{\beta}(p)\left(\dfrac{2\Gamma(\beta/p)\sin(\pi\beta/(2p))}{\Gamma(\beta)\sin(\pi\beta/2)}\right)^{p/\beta}.
% \end{align*}
Hence, by the Generalized CLT (see Theorem 3.12 (a) in \cite{nolan2020univariate}), i.e., $a_n\sum_{i\in\mathcal{I}_{n,j}}\xi_i-b_n\xrightarrow{\mathcal{L}}\mathcal{S}(\eta,\delta,1,0),$
we have
$
C_{p,\beta} S_{n,j}(p,\beta) \xrightarrow{\mathcal{L}}\mathcal{S}(\beta/p,1,1,0),
$
which implies that
$
    S_{n,j}(p,\beta) \xrightarrow{\mathcal{L}}\mathcal{S}(\beta/p,1,C_{\beta}(p)^{-1},0).
$
The proof is finished.
\end{proof}

%%%%%%%%%%%%%%%%%%%%%%%%%%%%%%%%%%%%%%%%%%
%%%%%%%%% Proof of Theorem 5 %%%%%%%%%%%%%
%%%%%%%%%%%%%%%%%%%%%%%%%%%%%%%%%%%%%%%%%%

\begin{proof}[Proof of Theorem \ref{th:large-k-2}]
   Note that
we have the following decomposition:
    \begin{align}
\label{eq:large-k-decomp-2}
    &({k_{n}}/{2})^{1/2}(\tilde{\sigma}_{n,j}(p,\beta) - \sigma_{n,t}^p) - \sigma_{n,t}^p\tilde{\Xi}_{n,j}(p,\beta) \nonumber\\
    &=  ({k_{n}}/{2})^{1/2}(\Delta_n^{p/\beta}\sigma_{t_{(n,j)}}^p - \Delta_n^{p/\beta}\sigma_{t}^p) 
    +(\Delta_n^{p/\beta}\sigma_{t_{(n,j)}}^p - \Delta_n^{p/\beta}\sigma_{t}^p) \tilde{\Xi}_{n,j}(p,\beta) + \tilde{R}_{n,j},
\end{align}
where
    \begin{align*}
   \tilde{R}_{n,j} &=(k_{n}/2)^{1/2}(\tilde{\sigma}_{n,j}(p,\beta)-\Delta_n^{p/\beta}\sigma_{t_{(n,j)}}^p ) - \Delta_n^{p/\beta}\sigma_{t_{(n,j)}}^p\tilde{\Xi}_{n,j}(p,\beta). 
  \end{align*}
Similar to \eqref{eq:esti-first} and \eqref{eq:esti-second}, we have
\begin{align}
\label{eq:esti-first-2}
    \mathbb{E}[|(k_{n}/2)^{1/2}(\Delta_n^{p/\beta}\sigma_{t_{(n,j)}}^p - \Delta_n^{p/\beta}\sigma_{t}^p)|] 
    &\leq K \Delta_n^{p\left(\kappa +\frac{1}{\beta}-(\kappa+\frac{1}{2p})\gamma\right)},
\end{align}
and since  $\tilde{\Xi}_{n,j}(p,\beta)=O_{\mathbb{P}}(1)$, we also have
\begin{align}
\label{eq:esti-second-2}
   \mathbb{E}[|(\Delta_n^{p/\beta}\sigma_{t_{(n,j)}}^p - \Delta_n^{p/\beta}\sigma_{t}^p)\tilde{\Xi}_{n,j}(p,\beta)| ] &\leq  K \Delta_n^{p\left(\kappa +\frac{1}{\beta}-\kappa \gamma\right)}.
\end{align}

For $\tilde{R}_{n,j}$, it holds by (\ref{eq:large-k-estimator-2}) and the definition of $\tilde{\Xi}_{n,j}(p,\beta)$ that
 \begin{align*}  
   \tilde{R}_{n,j}
   % &= (k_{n}/2)^{1/2}\left(\frac{1}{\tilde{c}_{\beta}(p)(k_{n}/2)}\sum_{i \in \tilde{\mathcal{I}}_{n,j}} |\Delta_{2i}^n X-\Delta_{2i-1}^n X|^p
   % -\Delta_n^{p/\beta}\sigma_{t_{(n,j)}}^p \right) 
   % \\
   % &\quad- \Delta_n^{p/\beta}\sigma_{t_{(n,j)}}^p
   % \frac{1}{(k_{n}/2)^{1/2}} \sum_{i\in\tilde{\mathcal{I}}_{n,j}}\left(\dfrac{|\Delta_{2i}^n Z-\Delta_{2i-1}^n Z|^p}{\tilde{c}_{\beta}(p)\Delta_n^{p/\beta}}-1\right)
   %  \\
    &= (k_{n}/2)^{-1/2}\tilde{c}_{\beta}(p)^{-1} \sum_{i\in \tilde{\mathcal{I}}_{n,j}} (|\Delta_{2i}^n X-\Delta_{2i-1}^n X|^p-|\sigma_{t_{(n,j)}}(\Delta_{2i}^n Z-\Delta_{2i-1}^n Z)|^p).
\end{align*}
Then we obtain from triangle inequality that for
$0<p<\beta/2<1$,
\begin{align*}
    \mathbb{E}[|\tilde{R}_{n,j}|] &\leq K_{p,\beta}(k_{n}/2)^{-1/2} \sum_{i\in\tilde{\mathcal{I}}_{n,j}} \mathbb{E}[|(\Delta_{2i}^n X-\Delta_{2i-1}^n X)-\sigma_{t_{(n,j)}}(\Delta_{2i}^n Z-\Delta_{2i-1}^n Z)|^p].
\end{align*}
By (\ref{eq:Price-process}) and the triangle inequality, we obtain 
 \begin{align*}
  &\mathbb{E}[|(\Delta_{2i}^n X-\Delta_{2i-1}^n X)-\sigma_{t_{(n,j)}}(\Delta_{2i}^n Z-\Delta_{2i-1}^n Z)|^p]
  \\
  &\leq K_p \mathbb{E}\left[\left|\int_{(2i-1)\Delta_n}^{2i\Delta_n} (b_s-b_{t_{(n,j)}})\,ds\right|^p\right] + K_p 
  \mathbb{E}\left[\left|\int_{(2i-2)\Delta_n}^{(2i-1)\Delta_n} (b_s-b_{t_{(n,j)}})\,ds\right|^p\right]
  \\
  &\quad+ \mathbb{E}\left[\left| \int_{(2i-1)\Delta_n}^{2i\Delta_n} (\sigma_{s-}-\sigma_{t_{(n,j)}})\,dZ_s\right|^p\right] 
  +
  \mathbb{E}\left[\left| \int_{(2i-2)\Delta_n}^{(2i-1)\Delta_n} (\sigma_{s-}-\sigma_{t_{(n,j)}})\,dZ_s\right|^p\right].
 \end{align*}
 
Note that we have for $q>1$,
\begin{align*}
     \mathbb{E}\left[\left|\int_{(2i-1)\Delta_n}^{2i\Delta_n} (b_s-b_{t_{(n,j)}})\,ds\right|^p\right] 
     &\leq K \left(\mathbb{E}\left[\left|\int_{(2i-1)\Delta_n}^{2i\Delta_n} (b_s-b_{t_{(n,j)}})\,ds\right|^q\right]\right)^{p/q}
     \\
     &\leq K \left(\Delta_n^{q-1}\int_{(2i-1)\Delta_n}^{2i\Delta_n} \mathbb{E}\left[\left|b_s-b_{t_{(n,j)}}\right|^q\right]\,ds\right)^{p/q}
     \\
     % &\leq K \Delta_n^p (k_{n}\Delta_n)^{p\tilde{\kappa}}
     % \\
     & \leq  K \Delta_n^{p(\tilde{\kappa}+1-\tilde{\kappa}\gamma)},
\end{align*}
where the first inequality and the second inequality follow by    H\"{o}lder's inequality, the third inequality follows from Assumption \ref{assumption-2} and 
$k_{n}\asymp\Delta_n^{-\gamma}$.
Similarly, we have 
\begin{align*}
     \mathbb{E}\left[\left|\int_{(2i-2)\Delta_n}^{(2i-1)\Delta_n} (b_s-b_{t_{(n,j)}})\,ds\right|^p\right]    
     \leq K \Delta_n^{p(\tilde{\kappa}+1-\tilde{\kappa}\gamma)}.
\end{align*}
On the other hand,
by Lemma \ref{le:Mom-esti} and Assumption \ref{assumption-2}, we obtain
that
\begin{align*}
     \mathbb{E}\left[\left| \int_{(2i-1)\Delta_n}^{2i\Delta_n} (\sigma_{s-}-\sigma_{t_{(n,j)}})\,dZ_s\right|^p\right] 
     &\leq K_{\beta,p}  \left( \int_{(2i-1)\Delta_n}^{2i\Delta_n} \mathbb{E}\left[\left|\sigma_{s-}-\sigma_{t_{(n,j)}}\right|^\beta\right]\,ds\right)^{p/\beta}
     \\
     &\leq K_{\beta,p} (\Delta_n(k_{n}\Delta_n)^{\beta\kappa})^{p/\beta}
     \\
     &\leq K_{\beta,p} \Delta_n^{p(\kappa+\frac{1}{\beta}-\kappa\gamma)},
\end{align*}
and that
\begin{align*}
     \mathbb{E}\left[\left| \int_{(2i-2)\Delta_n}^{(2i-1)\Delta_n} (\sigma_{s-}-\sigma_{t_{(n,j)}})\,dZ_s\right|^p\right] 
     &\leq K_{\beta,p} \Delta_n^{p(\kappa+\frac{1}{\beta}-\kappa\gamma)}.
\end{align*}
By combing the above estimates, we obtain
\begin{align}
\label{eq:tilde-core-inequality-stable}
  \mathbb{E}[|\Delta_{2i}^nX-\Delta_{2i-1}^nX - \sigma_{t_{(n,j)}}(\Delta_{2i}^nZ-\Delta_{2i-1}^nZ)|^p] \leq K_{\beta,p}  \Delta_n^{p((\tilde{\kappa}+1-\tilde{\kappa}\gamma)\wedge (\kappa+\frac{1}{\beta}-\kappa\gamma))}.
\end{align}
And so,
\begin{align}
\label{eq:tilde-R_nj}
 \mathbb{E}[|\tilde{R}_{n,j}|]   
% &\leq K_{\beta,p}  \Delta_n^{p((\tilde{\kappa}+1-\tilde{\kappa}\gamma)\wedge (\kappa+1/\beta-\kappa\gamma))-\gamma/2}
% \nonumber\\
 &\leq  K_{\beta,p}  \Delta_n^{p((\tilde{\kappa}+1-(\tilde{\kappa}+\frac{1}{2p})\gamma)\wedge (\kappa+\frac{1}{\beta}-(\kappa+\frac{1}{2p})\gamma))}.
\end{align}

Therefore,
by combing \eqref{eq:large-k-decomp-2}, \eqref{eq:esti-first-2}, \eqref{eq:esti-second-2} and \eqref{eq:tilde-R_nj}, we obtain
\begin{align*}
  &\mathbb{E} \left[ \left|(k_{n}/2)^{1/2}(\tilde{\sigma}_{n,j}(p,\beta) - \sigma_{n,t}^p) - \sigma_{n,t}^p\tilde{\Xi}_{n,j}(p,\beta)\right|\right]\leq K_{p,\beta} \Delta_n^{p\left(\left(\tilde{\kappa}+1-(\tilde{\kappa}+\frac{1}{2p})\gamma\right)\wedge\left(\kappa+\frac{1}{\beta}-(\kappa+\frac{1}{2p})\gamma\right)\right)}.
\end{align*}
Since $\sigma_{n,t}^p = O_{\mathbb{P}}(\Delta_n^{p/\beta})$, $\beta\in(1,2)$ and $0<\gamma <\frac{\tilde{\kappa}+1-\frac{1}{\beta}}{\tilde{\kappa}+\frac{1}{2p}}\wedge\frac{\kappa}{\kappa+\frac{1}{2p}}$,
we have 
 \begin{align*}
    \dfrac{(k_{n}/2)^{1/2}(\tilde{\sigma}_{n,j}(p,\beta) - \sigma_{n,t}^p)}{\sigma_{n,t}^p} - \tilde{\Xi}_{n,j}(p,\beta) 
    % &= O_{\mathbb{P}}\left(\Delta_n^{p\left(\left(\tilde{\kappa}+1-\frac{1}{\beta}-(\tilde{\kappa}+\frac{1}{2p})\gamma\right)\wedge\left(\kappa-(\kappa+\frac{1}{2p})\gamma\right)\right)}\right) 
    % \\
    &= o_{\mathbb{P}}\left(\Delta_n^{\varepsilon}\right),
\end{align*}
where $0<\varepsilon<p\left(\left(\tilde{\kappa}+1-\frac{1}{\beta}-(\tilde{\kappa}+\frac{1}{2p})\gamma\right)\wedge\left(\kappa-(\kappa+\frac{1}{2p})\gamma\right)\right)$.
The proof for transform $f$ is similar to the proof of in Theorem \ref{th:large-k-1}, and we omit it.
\end{proof}

%%%%%%%%%%%%%%%%%%%%%%%%%%%%%%%%%%%%%%%%%%
%%%%%%%%% Proof of Theorem 6 %%%%%%%%%%%%%
%%%%%%%%%%%%%%%%%%%%%%%%%%%%%%%%%%%%%%%%%%

\begin{proof}[Proof of theorem \ref{th:large-k-2-stable}]
 Note that
we have the following decomposition:
    \begin{align}
\label{eq:large-k-decomp-2-stable}
    &(k_{n}/2)^{1-p/\beta}(\tilde{\sigma}_{n,j}(p,\beta) - \sigma_{n,t}^p) - \sigma_{n,t}^p\tilde{S}_{n,j}(p,\beta) \nonumber\\
    &=  (k_{n}/2)^{1-p/\beta}(\Delta_n^{p/\beta}\sigma_{t_{(n,j)}}^p - \Delta_n^{p/\beta}\sigma_{t}^p) 
   +(\Delta_n^{p/\beta}\sigma_{t_{(n,j)}}^p - \Delta_n^{p/\beta}\sigma_{t}^p)\tilde{S}_{n,j}(p,\beta) + \tilde{R}_{n,j}',
\end{align}
where
    \begin{align*}
   \tilde{ R}_{n,j}' &=(k_{n}/2)^{1-p/\beta}(\tilde{\sigma}_{n,j}(p,\beta)-\Delta_n^{p/\beta}\sigma_{t_{(n,j)}}^p ) - \Delta_n^{p/\beta}\sigma_{t_{(n,j)}}^p\tilde{S}_{n,j}(p,\beta). 
  \end{align*}

For $p\in(0,1\wedge\beta)$, it holds by the inequality $|a^p-b^p|\leq K_p|a-b|^p$ and Assumption \ref{assumption-2} that
\begin{align}
\label{eq:tilde-esti-first-stable}
    \mathbb{E}[|(k_{n}/2)^{1-p/\beta}(\Delta_n^{p/\beta}\sigma_{t_{(n,j)}}^p - \Delta_n^{p/\beta}\sigma_{t}^p)|] 
   %  &\leq K_{p,\beta}k_{n}^{1-p/\beta}\Delta_n^{p/\beta}(k_{n}\Delta_n)^{p\kappa}
   % \nonumber \\
    &\leq K_{p,\beta} \Delta_n^{p\left(\kappa +\frac{1}{\beta}-(\kappa+\frac{1}{p}-\frac{1}{\beta})\gamma\right)}.
\end{align}
Since  $\tilde{S}_{n,j}(p,\beta)=O_{\mathbb{P}}(1)$, we also have
\begin{align}
\label{eq:tilde-esti-second-stable}
   \mathbb{E}[|(\Delta_n^{p/\beta}\sigma_{t_{(n,j)}}^p - \Delta_n^{p/\beta}\sigma_{t}^p)\tilde{S}_{n,j}(p,\beta)| ] &\leq  K \Delta_n^{p\left(\kappa +\frac{1}{\beta}-\kappa \gamma\right)}.
\end{align}
For $\tilde{R}_{n,j}'$, it holds by (\ref{eq:large-k-estimator-2}) and
the definition of $\tilde{S}_{n,j}(p,\beta)$ that
 \begin{align*}  
   \tilde{R}'_{n,j}
   % &= (k_{n}/2)^{1-p/\beta}\left(\frac{1}{\tilde{c}_{\beta}(p)(k_{n}/2)}\sum_{i \in \tilde{\mathcal{I}}_{n,j}} |\Delta_{2i}^n X-\Delta_{2i-1}^n X|^p
   % -\Delta_n^{p/\beta}\sigma_{t_{(n,j)}}^p \right) 
   % \\
   % &\quad- \Delta_n^{p/\beta}\sigma_{t_{(n,j)}}^p
   % \frac{1}{(k_{n}/2)^{p/\beta}} \sum_{i\in\tilde{\mathcal{I}}_{n,j}}\left(\dfrac{|\Delta_{2i}^n Z-\Delta_{2i-1}^n Z|^p}{\tilde{c}_{\beta}(p)\Delta_n^{p/\beta}}-1\right)
   %  \\
    &= (k_{n}/2)^{-p/\beta}\tilde{c}_{\beta}(p)^{-1} \sum_{i\in \tilde{\mathcal{I}}_{n,j}} (|\Delta_{2i}^n X-\Delta_{2i-1}^n X|^p-|\sigma_{t_{(n,j)}}(\Delta_{2i}^n Z-\Delta_{2i-1}^n Z)|^p).
\end{align*}
From \eqref{eq:tilde-core-inequality-stable}, we have 
\begin{align}
\label{eq:tilde-R_nj-stable}
 \mathbb{E}[|\tilde{R}'_{n,j}|]   
% &\leq K_{\beta,p}  \Delta_n^{p((\tilde{\kappa}+1-\tilde{\kappa}\gamma)\wedge (\kappa+1/\beta-\kappa\gamma))-(1-p/\beta)\gamma}
% \nonumber\\
 &\leq K_{\beta,p}  \Delta_n^{p((\tilde{\kappa}+1-(\tilde{\kappa}+\frac{1}{p}-\frac{1}{\beta})\gamma)\wedge (\kappa+\frac{1}{\beta}-(\kappa+\frac{1}{p}-\frac{1}{\beta})\gamma))}. 
\end{align}
Hence, by combing \eqref{eq:large-k-decomp-2-stable},
\eqref{eq:tilde-esti-first-stable},
\eqref{eq:tilde-esti-second-stable} and \eqref{eq:tilde-R_nj-stable}, we obtain
\begin{align*}
    &\mathbb{E}[|(k_{n}/2)^{1-p/\beta}(\tilde{\sigma}_{n,j}(p,\beta) - \sigma_{n,t}^p) - \sigma_{n,t}^p\tilde{S}_{n,j}(p,\beta)|]
    \\
    &\leq  K_{\beta,p}  \Delta_n^{p((\tilde{\kappa}+1-(\tilde{\kappa}+\frac{1}{p}-\frac{1}{\beta})\gamma)\wedge (\kappa+\frac{1}{\beta}-(\kappa+\frac{1}{p}-\frac{1}{\beta})\gamma))}.
\end{align*}
Since $\sigma_{n,t} = O_{\mathbb{P}}(\Delta_n^{1/\beta})$, $\beta\in(1/(1+\tilde{\kappa}),2)$ and $0<\gamma <\frac{\tilde{\kappa}+1-\frac{1}{\beta}}{\tilde{\kappa}+\frac{1}{p}-\frac{1}{\beta}}\wedge\frac{\kappa}{\kappa+\frac{1}{p}-\frac{1}{\beta}}$, we have 
 \begin{align*}
    \dfrac{(k_{n}/2)^{1-p/\beta}(\tilde{\sigma}_{n,j}(p,\beta) - \sigma_{n,t}^p)}{\sigma_{n,t}^p} -\tilde{S}_{n,j}(p,\beta)
    % \\
    % &= O_{\mathbb{P}}\left(\Delta_n^{p((\tilde{\kappa}+1-\frac{1}{\beta}-(\tilde{\kappa}+\frac{1}{p}-\frac{1}{\beta})\gamma)\wedge (\kappa-(\kappa+\frac{1}{p}-\frac{1}{\beta})\gamma))}\right) 
    % \\
    &= o_{\mathbb{P}}\left(\Delta_n^{\varepsilon}\right),
\end{align*}
where $0<\varepsilon<p((\tilde{\kappa}+1-\frac{1}{\beta}-(\tilde{\kappa}+\frac{1}{p}-\frac{1}{\beta})\gamma)\wedge (\kappa-(\kappa+\frac{1}{p}-\frac{1}{\beta})\gamma))$.

For $p\in [1,\beta)$,
similar to \eqref{eq:esti-first-stable-big-1}
and \eqref{eq:esti-second-stable-big-1}, we obtain 
\begin{align}
\label{eq:tilde-esti-first-stable-big-1}
    \mathbb{E}[|(k_{n}/2)^{1-p/\beta}(\Delta_n^{p/\beta}\sigma_{t_{(n,j)}}^p - \Delta_n^{p/\beta}\sigma_{t}^p)|] 
   %  &\leq K_{p,\beta}k_{n}^{1-p/\beta}\Delta_n^{p/\beta}(k_{n}\Delta_n)^{\kappa}
   % \nonumber \\
    &\leq K_{p,\beta} \Delta_n^{\kappa +\frac{p}{\beta}-(\kappa+1-\frac{p}{\beta})\gamma},
\end{align}
and 
\begin{align}
\label{eq:tilde-esti-second-stable-big-1}
   \mathbb{E}[|(\Delta_n^{p/\beta}\sigma_{t_{(n,j)}}^p - \Delta_n^{p/\beta}\sigma_{t}^p)\tilde{S}_{n,j}'(p,\beta)| ] &\leq  K \Delta_n^{\kappa +\frac{p}{\beta}-\kappa \gamma}.
\end{align}
For $\tilde{R}_{n,j}'$, we have
\begin{align*}
   & \mathbb{E}[||\Delta_{2i}^n X-\Delta_{2i-1}^n X|^p-|\sigma_{t_{(n,j)}}(\Delta_{2i}^n Z-\Delta_{2i-1}^n Z)|^p|]
   % \\
   % & \leq  K_p \mathbb{E}[(|\Delta_i^n X-\Delta_{i-2}^n X|^{p-1}+|\sigma_{t_{(n,j)}}(\Delta_i^n Z-\Delta_{i-2}^n Z)|^{p-1})
   % \\
   % &\quad\times |(\Delta_i^n X-\Delta_{i-2}^n X)-\sigma_{t_{(n,j)}}(\Delta_i^n Z-\Delta_{i-2}^n Z)|]
   \\
   &\leq  K_p (\mathbb{E}[|\Delta_{2i}^n X-\Delta_{2i-1}^n X|^p+|\Delta_{2i}^n Z-\Delta_{2i-1}^n Z|^p])^{\frac{p-1}{p}}
   \\
   &\quad\times (\mathbb{E}[|(\Delta_{2i}^n X-\Delta_{2i-1}^n X)-\sigma_{t_{(n,j)}}(\Delta_{2i}^n Z-\Delta_{2i-1}^n Z)|^p])^{1/p}
   % \\
   % &\leq  K_{\beta,p} \Delta_n^{\frac{p-1}{\beta}}  \Delta_n^{(\tilde{\kappa}+1-\tilde{\kappa}\gamma)\wedge (\kappa+1/\beta-\kappa\gamma)}
   \\
   &\leq K_{\beta,p}   \Delta_n^{(\tilde{\kappa}+1+\frac{p-1}{\beta}-\tilde{\kappa}\gamma)\wedge (\kappa+\frac{p}{\beta}-\kappa\gamma)},
\end{align*}
where we used $|\Delta_{2i}^nX-\Delta_{2i-1}^nX|=O_{\mathbb{P}}(\Delta_n^{1/\beta})$, $|\Delta_{2i}^nZ-\Delta_{2i-1}^nZ|=O_{\mathbb{P}}(\Delta_n^{1/\beta})$ and \eqref{eq:tilde-core-inequality-stable}
in the second inequality. Subsequently, we obtain
\begin{align}
\label{eq:tilde-R_nj-stable-big-1}
 \mathbb{E}[| \tilde{R}_{n,j}' |] 
 % &\leq  K_{\beta,p}k_{n}^{1-p/\beta}   \Delta_n^{(\tilde{\kappa}+1+\frac{p-1}{\beta}-\tilde{\kappa}\gamma)\wedge (\kappa+\frac{p}{\beta}-\kappa\gamma)}
 % \nonumber\\
 &\leq K_{\beta,p}  \Delta_n^{(\tilde{\kappa}+1+\frac{p-1}{\beta}-(\tilde{\kappa}+1-\frac{p}{\beta})\gamma)\wedge (\kappa+\frac{p}{\beta}-(\kappa+1-\frac{p}{\beta})\gamma)}. 
\end{align}
Hence, by combing \eqref{eq:large-k-decomp-2-stable},
\eqref{eq:tilde-esti-first-stable-big-1}, \eqref{eq:tilde-esti-second-stable-big-1} and
\eqref{eq:tilde-R_nj-stable-big-1}, we obtain
\begin{align*}
 &\mathbb{E}[|(k_{n}/2)^{1-p/\beta}(\tilde{\sigma}_{n,j}(p,\beta) - \sigma_{n,t}^p) - \sigma_{n,t}^p\tilde{S}_{n,j}(p,\beta)|] 
 \\
 &\leq K_{\beta,p}  \Delta_n^{(\tilde{\kappa}+1+\frac{p-1}{\beta}-(\tilde{\kappa}+1-\frac{p}{\beta})\gamma)\wedge (\kappa+\frac{p}{\beta}-(\kappa+1-\frac{p}{\beta})\gamma)}.  
\end{align*}
Since $\sigma_{n,t}^p = O_{\mathbb{P}}(\Delta_n^{p/\beta})$, 
$\beta\in(1/(1+\tilde{\kappa}),2)$, $p\in [1,\beta)$ and $0<\gamma <\frac{\kappa}{\kappa+1-\frac{p}{\beta}}=\frac{\kappa}{\kappa+1-\frac{p}{\beta}}\wedge\frac{\tilde{\kappa}+1-\frac{1}{\beta}}{\tilde{\kappa}+1-\frac{p}{\beta}}$, we have 
 \begin{align*}
    \dfrac{(k_{n}/2)^{1-p/\beta}(\tilde{\sigma}_{n,j}(p,\beta) - \sigma_{n,t}^p)}{\sigma_{n,t}^p} -\tilde{S}_{n,j}(p,\beta) 
    % &= O_{\mathbb{P}}\left(\Delta_n^{(\tilde{\kappa}+1-\frac{1}{\beta}-(\tilde{\kappa}+1-\frac{p}{\beta})\gamma)\wedge (\kappa-(\kappa+1-\frac{p}{\beta})\gamma)}\right) 
    % \\
    &= o_{\mathbb{P}}\left(\Delta_n^{\varepsilon}\right),
\end{align*}
where $0<\varepsilon<{(\tilde{\kappa}+1-\frac{1}{\beta}-(\tilde{\kappa}+1-\frac{p}{\beta})\gamma)\wedge (\kappa-(\kappa+1-\frac{p}{\beta})\gamma)}$.
The proof for transform $f$ is similar to the proof of in Theorem \ref{th:large-k-1}, and we omit it.
\end{proof}

%%%%%%%%%%%%%%%%%%%%%%%%%%%%%%%%%%%%%%%%%%
%%%%%%%%% Proof of Lemma 3 %%%%%%%%%%%%%%%
%%%%%%%%%%%%%%%%%%%%%%%%%%%%%%%%%%%%%%%%%%

\begin{proof}[Proof of Lemma \ref{le:lim-distri-tilde-S_nj}]
By the self-similarity property of the stable process, we have
\begin{align*}
   \tilde{S}_{n,j}(p,\beta)
   \;\overset{\mathcal{L}}{=}\;
   \frac{1}{(k_{n}/2)^{p/\beta}}
   \sum_{i\in\tilde{\mathcal{I}}_{n,j}}
   \left(\frac{|\tilde{Z}_i|^p}{\tilde{c}_{\beta}(p)}-1\right),
\end{align*}
where $(\tilde{Z}_i)_{i\in\tilde{\mathcal{I}}_{n,j}}$ are i.i.d. stable random variables satisfying $\mathbb{E}[|\tilde{Z}_i|^p]=\tilde{c}_\beta(p)$ and
$\tilde{Z}_i\sim\mathcal{S}(\beta,0,1,0)$.
For any $i\in\tilde{\mathcal{I}}_{n,j}$, let us set 
$\tilde{\xi}_i \equiv |\tilde{Z}_i|^p / \tilde{c}_\beta(p)$.
Then $(\tilde{\xi}_i)_{i\in\tilde{\mathcal{I}}_{n,j}}$ are i.i.d.\ variables with $\mathbb{E}[\tilde{\xi}_i]=1$. 
Note that
\begin{align*}
\lim_{x\to\infty} x^{\beta/p}\mathbb{P}[\tilde{\xi}_i > x]
&=\lim_{x\to\infty} x^{\beta/p}\mathbb{P}[|\tilde{Z}_i| > (x\tilde{c}_\beta(p))^{1/p}]=\frac{2\Gamma(\beta)\sin(\pi\beta/2)}{\pi}\,
\tilde{c}_\beta(p)^{-\beta/p},
\end{align*}
where the second equality follows from the fact $\tilde{Z}_i\sim\mathcal{S}(\beta,0,1,0)$ and Theorem~1.2 in \cite{nolan2020univariate}.
% Furthermore, it is clear that 
% $\lim_{x\to\infty}x^{\beta/p}\mathbb{P}[\tilde{\xi}_i<-x]=0$. 

By taking $\eta=\beta/p$, $c^-=0$, and 
$c^+=2\tilde{c}_\beta(p)^{-\beta/p}\Gamma(\beta)\sin(\pi\beta/2)/\pi$, we have
\begin{align*}
\delta &\equiv \frac{c^+ - c^-}{c^+ + c^-} = 1,
\\
a_n &\equiv (k_{n}/2)^{-1/\eta}
\left(
\frac{2\Gamma(\eta)\sin(\pi\eta/2)}{\pi(c^+ + c^-)}
\right)^{1/\eta}
=\tilde{C}_\beta(p)(k_{n}/2)^{-p/\beta},
\\
b_n &\equiv (k_{n}/2)\tilde{C}_\beta(p)a_n\mathbb{E}[\xi]
= (k_{n}/2)\tilde{C}_\beta(p)k_{n}^{-p/\beta},
\end{align*}
% where
% \[
% \tilde{C}_\beta(p)
% = \tilde{c}_\beta(p)
% \left(
% \frac{\Gamma(\beta/p)\sin(\pi\beta/(2p))}
% {\Gamma(\beta)\sin(\pi\beta/2)}
% \right)^{p/\beta}.
% \]
Hence, by the generalized CLT (see Theorem 3.12 (a) in \cite{nolan2020univariate}),
we have 
\(
\tilde{C}_{p,\beta}\tilde{S}_{n,j}(p,\beta)
\;\xrightarrow{\mathcal{L}}\;
\mathcal{S}\!\left({\beta}/{p},1,1,0\right),
\)
which implies that
\(
\tilde{S}_{n,j}(p,\beta)
\;\xrightarrow{\mathcal{L}}\;
\mathcal{S}\!\left({\beta}/{p},1,\tilde{C}_\beta(p)^{-1},0\right).
\)
The proof is completed.
\end{proof}

%%%%%%%%%%%%%%%%%%%%%%%%%%%%%%%%%%%%%%%%%%%%%%%
%%% Example with single Appendix:            %%
%%%%%%%%%%%%%%%%%%%%%%%%%%%%%%%%%%%%%%%%%%%%%%%
%\begin{appendix}
%\section*{Title}\label{appn} %% if no title is needed, leave empty \section*{}.
%Appendices should be provided in \verb|{appendix}| environment,
%before Acknowledgements.
%
%If there is only one appendix,
%then please refer to it in text as \ldots\ in the \hyperref[appn]{Appendix}.
%\end{appendix}
%%%%%%%%%%%%%%%%%%%%%%%%%%%%%%%%%%%%%%%%%%%%%%%
%%% Example with multiple Appendixes:        %%
%%%%%%%%%%%%%%%%%%%%%%%%%%%%%%%%%%%%%%%%%%%%%%%
% \begin{appendix}
% \section{Other  results}\label{appA}
%Sample of cross-reference to the formula (\ref{path}) in Appendix \ref{appB}.
% \end{appendix}

%%%%%%%%%%%%%%%%%%%%%%%%%%%%%%%%%%%%%%%%%%%%%%
%% Support information, if any,             %%
%% should be provided in the                %%
%% Acknowledgements section.                %%
%%%%%%%%%%%%%%%%%%%%%%%%%%%%%%%%%%%%%%%%%%%%%%
%\begin{acks}[Acknowledgments]
%The authors would like to thank the anonymous referees, an Associate
%Editor and the Editor for their constructive comments that improved the
%quality of this paper.
%\end{acks}

% \newpage

%%%%%%%%%%%%%%%%%%%%%%%%%%%%%%%%%%%%%%%%%%%%%%
%% Funding information, if any,             %%
%% should be provided in the                %%
%% funding section.                         %%
%%%%%%%%%%%%%%%%%%%%%%%%%%%%%%%%%%%%%%%%%%%%%%
\begin{funding}
D. Chen's research is supported by Singapore Ministry of Education Tier 1 Grant (Grant ID: 24-SOE-SMU-042), and National Natural Science Foundation of China (Grant ID: 12571515).
J. Li's research is supported by Singapore Ministry of Education Tier 2 Grant (Grant ID: T2EP40124-0021).
\end{funding}

\bibliographystyle{imsart-number} % Style BST file (imsart-number.bst or imsart-nameyear.bst)
\bibliography{reference}      % Bibliography file (usually '*.bib')

%% or include bibliography directly:
%\begin{thebibliography}{4}
%%%
%\bibitem{r1}
%\textsc{Billingsley, P.} (1999). \textit{Convergence of
%Probability Measures}, 2nd ed.
%Wiley, New York.
%
%\bibitem{r2}
%\textsc{Bourbaki, N.}  (1966). \textit{General Topology}  \textbf{1}.
%Addison--Wesley, Reading, MA.
%
%\bibitem{r3}
%\textsc{Ethier, S. N.} and \textsc{Kurtz, T. G.} (1985).
%\textit{Markov Processes: Characterization and Convergence}.
%Wiley, New York.
%
%\bibitem{r4}
%\textsc{Prokhorov, Yu.} (1956).
%Convergence of random processes and limit theorems in probability
%theory. \textit{Theory  Probab.  Appl.}
%\textbf{1} 157--214.
%\end{thebibliography}

\end{document}